\documentclass[12pt]{article}
\title{Palm measures for Dirac operators and the $\Sineb$ process}
\date{}
\author{Benedek Valk\'o and B\'alint Vir\'ag}

\oddsidemargin 0in \topmargin 0in \headheight 0in \headsep 0in
\textheight 9in \textwidth 6.5in

\usepackage{amsmath, amsthm, amssymb}
\usepackage{graphicx}
\usepackage{amsmath, enumerate}
\usepackage{color, url}

\usepackage[longnamesfirst,round]{natbib}
\bibliographystyle{plainnat}
\bibpunct[ ]{(}{)}{,}{a}{}{,}

\usepackage{hyperref}
    \newtheorem{theorem}{Theorem}
    \newtheorem{lemma}[theorem]{Lemma}
    \newtheorem{proposition}[theorem]{Proposition}
    \newtheorem{corollary}[theorem]{Corollary}

   \newtheorem{problem}[theorem]{Problem}
   
\theoremstyle{definition} 
    \newtheorem{definition}[theorem]{Definition}
    \newtheorem{remark}[theorem]{Remark}
    
    \newtheorem{assumption}{Assumption}
    
\newcommand{\eps}{\varepsilon}
\newcommand{\Z}{{\mathbb Z}}
\newcommand{\ZZ}{{\mathbb Z}}

\newcommand{\UU}{{\mathbb U}}

\newcommand{\R}{{\mathbb R}}
\newcommand{\CC}{{\mathbb C}}

\newcommand{\HH}{{\mathbb H} }

\newcommand{\lstar}{{\raise-0.15ex\hbox{$\scriptstyle \ast$}}}

\theoremstyle{remark} 
\newcommand{\Sineb}{\operatorname{Sine}_{\beta}}

\newcommand{\Sineop}{\mathtt{Sine}_{\beta}}
\newcommand{\Circop}{\mathtt{Circ}_{\beta,n}}

\newcommand{\Dirop}{\mathtt{Dir}}

\definecolor{violet}{rgb}{0.8,0,0.2}

\newcommand{\ed}{\stackrel{d}{=}}

\newcommand{\cA}{{\mathcal A}}

\newcommand{\cI}{{\mathcal I}}

\newcommand{\btau}{{\boldsymbol{\tau}}}

\newcommand{\TT}{\sigma} 

\newcommand{\mat}[4]{
\begin{pmatrix}
#1 & #2  \\
#3 & #4 
\end{pmatrix}
}

\newcommand{\bin}[2]{\left(
\begin{array} {c}
#1 \\
#2
\end{array}
\right)}

\newcommand{\ac}{{\text{\sc ac}}}
\newcommand{\dom}{\operatorname{dom}}

\newcommand{\ind}{\mathbf 1}

\newcommand{\benedek}[1]{\textcolor{red}{\tt{#1}}}

\newcommand{\res}{{\mathtt{r}\,}}

\newcommand{\uu}{\mathfrak  u}
\newcommand{\ttr}{\mathfrak t}

\newcommand{\bside}{\noindent\textbf{\benedek{Begin side computation.}}
\begin{footnotesize}}

\newcommand{\eside}{\end{footnotesize}
\noindent \textbf{\benedek{End side computation.}}}
\newcommand{\spec}{\operatorname{spec}}

\newcommand{\Poi}{\operatorname{Poi}}

\begin{document}
\maketitle
\begin{abstract}
We characterize the Palm measure of the $\Sineb$ process as the eigenvalues of an associated operator with a specific boundary condition. 
\end{abstract}


\section{Introduction}
The Palm measure of a point process is its distribution as seen from a typical point. We study the Palm measure of the $\Sineb$ process, which is the bulk scaling limit of the eigenvalues for a large class of random matrices. (See \cite{BVBV}, \cite{BVBV_sbo}, \cite{BEY}.)
There are several reasons to do this:  
\begin{itemize}
\item the secular function for (the operator associated to) the Palm measure has as simpler description than that of the original $\Sineb$ process,

\item the intensity of the Palm measure is given by the two-point correlation functions of the original process, a source of many open problems and conjectures,

\item the operator description of the Palm measure can be given explicitly: this is the  main goal of the current paper,

\item in the  case of the circular beta ensembles, the finite version of the $\Sineb$ process, the distribution of the Verblunsky coefficients can be given explicitly,

\item the Palm measures are closely related to the well-studied circular Jacobi ensembles and Hua-Pickrell measures of random matrix theory. We will describe the connection explicitly.
\end{itemize}

Let $b_1, b_2$ be independent two-sided standard Brownian motion, and for $t\in (0,1]$ set 
\begin{align}\label{eq:xy}
   y_t=e^{b_2(u)-u/2}, \quad  x_t=-\int_u^0 e^{b_2(s)-\tfrac{s}{2}} d b_1, \quad u=u(t)=\tfrac4{\beta} \log t.
\end{align}
Let $q$ be Cauchy distributed random variable independent of  $b_1, b_2$.
Define the function $R(t):(0,1]\to \R^{2\times 2}$ via
\begin{align}\label{eq:R_x_y}
R=\frac{X^t X}{2\det X}, \qquad X=\mat{1}{-x}{0}{y}.
\end{align}
In this paper, we consider the $\btau_\beta$ operator
\begin{equation}\label{tau_0}
\btau_\beta u=R^{-1} J u', \qquad J=\mat{0}{-1}{1}{0},
\end{equation}
acting on absolutely continuous of functions of the form $u:(0,1]\to \R^2$, with the boundary conditions $[1,0]^t$ at $0$ and $[-q,-1]^t$ at $1$. The spectrum of the $\btau_\beta$ operator is given by the $\Sineb$ process, see \cite{BVBV_sbo}. Our main result is the following theorem.

\begin{theorem}\label{thm:Sine_Palm}
The Palm measure of the $\Sineb$ process has the same law as the spectrum of $\btau_\beta$ with modified boundary condition $[1,0]^t$ at $1$. 
\end{theorem}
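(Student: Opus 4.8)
The plan is to derive Theorem~\ref{thm:Sine_Palm} as a scaling limit of the analogous fact for the circular $\beta$-ensemble $\mathrm{C}\beta\mathrm{E}_n$, the finite version of $\Sineb$. Recall that $\btau_\beta$ arises as the scaling limit of the CMV operators $\Circop$ of $\mathrm{C}\beta\mathrm{E}_n$ (\cite{BVBV}, \cite{BVBV_sbo}): the ``bulk'' Verblunsky coefficients encode the matrix $R$ of \eqref{eq:R_x_y} built from $b_1,b_2$, while the single unimodular boundary Verblunsky coefficient $\gamma_{n-1}$, uniform on the unit circle, encodes the Cauchy parameter $q$ at $1$. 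The idea is to rerun this limit starting from the Palm version of $\mathrm{C}\beta\mathrm{E}_n$ and to track how the boundary data changes.

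\emph{Step 1 (finite Palm measure).} Since the density of $\mathrm{C}\beta\mathrm{E}_n$ is the Gibbsian $\prod_{j<k}|e^{i\theta_j}-e^{i\theta_k}|^\beta$ and the ensemble is exchangeable, its Palm measure at the point $1$ is the deterministic point $1$ together with the process on $n-1$ points with density proportional to $\prod_{j<k}|e^{i\theta_j}-e^{i\theta_k}|^\beta\prod_j|1-e^{i\theta_j}|^\beta$, a circular Jacobi ensemble (the finite version of the Hua--Pickrell measures, for the appropriate $\delta$). By the Killip--Nenciu type parametrization of this ensemble, its Verblunsky coefficients are again independent with explicit laws: those of index far from both ends are asymptotically distributed as for $\mathrm{C}\beta\mathrm{E}_n$, while the unimodular boundary coefficient is no longer uniform but is forced to the value that puts $1$ into the spectrum. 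Equivalently, in the size-$n$ CMV picture one keeps the bulk coefficients of $\Circop$ and replaces $\gamma_{n-1}$ by the unimodular number $\overline{\Phi_{n-1}(1)}/\Phi_{n-1}(1)$, where $\Phi_{n-1}$ is the degree-$(n-1)$ monic orthogonal polynomial; since $\gamma_{n-1}$ is uniform and independent of the others, this substitution leaves the bulk coefficients' law unchanged, up to the Jacobian of the change of variables from $\gamma_{n-1}$ to the position of the eigenvalue near $1$.

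\emph{Step 2 (the limit).} The bulk Verblunsky coefficients being asymptotically unchanged, their scaling limit is again the matrix $R$ built from $(b_1,b_2)$ with the original law, so the limiting operator is $\btau_\beta$ with the same $R$. The prescription ``force the boundary coefficient so that $1$ is an eigenvalue'' passes to the limit as ``impose the boundary condition at $t=1$ under which $0$ is an eigenvalue of $\btau_\beta$.'' But at $\lambda=0$ the equation $R^{-1}Ju'=0$ reads $u'=0$, so the only solution obeying the left boundary condition $[1,0]^t$ at $0$ is the constant $[1,0]^t$; hence the selected boundary condition at $1$ is exactly $[1,0]^t$. Combined with the convergence of the $\mathrm{C}\beta\mathrm{E}_n$ Palm measures to the $\Sineb$ Palm measure (which follows from weak convergence of the ensembles together with convergence of their intensities), this identifies the Palm measure of $\Sineb$ with the spectrum of $\btau_\beta$ under the boundary condition $[1,0]^t$ at $1$.

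\emph{Main obstacle.} The hard part is the scaling limit of Step~2, and within it the Jacobian from Step~1. Since the forced boundary Verblunsky coefficient is now a function of the bulk ones, its convergence must be shown jointly with the bulk, which requires extending the transfer-matrix and resolvent estimates of \cite{BVBV}, \cite{BVBV_sbo} to control $\Phi_{n-1}(z)$ near $z=1$ uniformly in $n$, a regime in which the relevant phase carries fluctuations of order $\log n$. One must also rule out a spurious tilt of the Brownian paths: the direct Mecke computation --- replacing $q$ by the spectral parameter and integrating over it --- produces a change of variables whose Jacobian is the local eigenvalue intensity seen from the marked point, equal to $\int_0^1\tfrac{dt}{2y_t}$, the $R$-weighted squared norm of the constant eigenfunction $[1,0]^t$. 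Theorem~\ref{thm:Sine_Palm} amounts to the assertion that this factor is invisible to the law of the spectrum of $\btau_\beta$ with the $[1,0]^t$ boundary condition, and proving this --- either from a symmetry of $\btau_\beta$ fixing the two boundary conditions, or, in keeping with the circular-ensemble route, by checking that the near-hard-edge Verblunsky coefficients of the circular Jacobi ensemble contribute no extra factor in the limit --- is what I expect to be the crux of the argument.
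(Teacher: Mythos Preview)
Your proposal follows a reasonable heuristic but leaves unresolved precisely the point you flag as the ``main obstacle,'' and the paper's argument takes a genuinely different route that sidesteps it.

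The paper does not take the $n\to\infty$ limit of the Palm measure of $\mathrm{C}\beta\mathrm{E}_n$. Instead, it works with the \emph{right spectral measure} $\mu$ of $\btau_\beta$ --- the eigenvalue point process decorated by the weights $|f_\lambda(1)|^2/\|f_\lambda\|_R^2$. The key deterministic lemma (Proposition~\ref{prop:Dir_Spec_weight}, via Lemma~\ref{lem:pullback}) shows that for any Dirac operator with a suitably random right boundary condition $q$, biasing the right spectral measure by its weight at $0$ is \emph{exactly} equivalent to replacing $q$ by $\infty$, with no residual Jacobian. The reason is that the spectral weight at an eigenvalue $\lambda$ equals $2/\partial_\lambda\alpha(1,\lambda)$, which is precisely the reciprocal of the density with which eigenvalues appear as $q$ varies; your troublesome factor $\int_0^1 dt/(2y_t)=\|H(\cdot,0)\|_R^2$ is the inverse of this spectral weight at $0$, and it is absorbed automatically.

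To pass from the Palm measure of the spectral measure to the Palm measure of the bare point process, the paper invokes Proposition~\ref{prop:sine_weights}: the spectral weights are i.i.d.\ Gamma and independent of the eigenvalues, so biasing by the weight at $0$ does not tilt the law of the support. This proposition \emph{is} proved by finite approximation (the Dirichlet weights of the Killip--Nenciu measure converge to i.i.d.\ Gamma under the coupling of Proposition~\ref{prop:coupling}), but that is the only place the circular ensemble enters; the Palm computation itself is carried out directly at the level of $\btau_\beta$.

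Your route is not wrong in principle, but the obstacle you identify is genuine and neither of your suggested resolutions (a hidden symmetry of $\btau_\beta$, or a near-hard-edge Verblunsky analysis) is fleshed out. The paper's insight --- that the spectral weights carry the Jacobian and are independent of the support --- is exactly what makes the argument close, and it is logically distinct from the circular-Jacobi limit you outline. A secondary gap in your Step~2 is the claim that convergence of Palm measures follows from weak convergence plus convergence of intensities; this requires more (e.g.\ control of second moments or a coupling), though it is less serious than the Jacobian issue.
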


The proof of this theorem relies on two key facts: one is the shift invariance of the process and the other is the special relationship between the spectral measure and the eigenvalue distribution for the $\btau_\beta$ operator. Note that in this paper the spectral measure is always a real valued measure on $\R$.

For an $n\times n$ unitary matrix $U$ and a nonzero vector $v$ the spectral measure at $v$ is the probability measure 
$$
\sum_{\lambda} \langle v,\varphi_\lambda\rangle^2 \delta_\lambda. 
$$
Here the sum is over all eigenvalues $\lambda$, and the $\varphi_\lambda$ are the unit length eigenvectors of $U$ forming an orthonormal basis. 

For the Dirac operators of the type \eqref{tau_0}, we replace $\langle v,\varphi_\lambda\rangle$  by the value of $\varphi_\lambda$ at $0$ or $1$ to get the {\bf left} and {\bf right spectral measures}. More precisely, 
\begin{align}\label{eq:rightspec}
\mu_{\textup{left}}=\sum_{\lambda} |\varphi_\lambda(0)|^2\delta_\lambda,
\qquad
\mu_{\textup{right}}=\sum_{\lambda} |\varphi_\lambda(1)|^2\delta_\lambda,
\end{align}
where $\varphi_\lambda$ is the eigenfunction corresponding to $\lambda$ with normalization $\int_0^1 \varphi^t_\lambda R \varphi_\lambda dt=1$.  For the Dirac operators we consider the spectral measure has infinite total mass. 

An important ingredient in the proof of Theorem \ref{thm:Sine_Palm} is the description of how the right spectral measure of a  random Dirac operator changes when it is biased by the weight of zero. This is the content of the following proposition, see Proposition \ref{prop:Dir_Spec_weight} for a more precise formulation.  

\begin{proposition}\label{prop:Dir_Spec_weight_heur}
Let $\mu_q$ denote the right spectral measure of the  random Dirac operator of the form \eqref{tau_0} with boundary conditions $[1,0]^t$ and $[-q,-1]^t$, with $\mu_\infty$ corresponding to boundary conditions $[1,0]^t$ at both ends.
Assume that  $q$ is chosen independently of the function $R$ with a sufficiently regular distribution. Then the law of $\mu_q$ biased by $\mu_q((-\eps,\eps))$ has a distributional limit as $\eps\to 0$ given by $\mu_{\infty}$.


\end{proposition}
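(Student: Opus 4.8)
The plan is to compute the effect of the biasing explicitly at the level of the boundary-condition parameter $q$ and to identify the limiting distribution. Write the right spectral measure as $\mu_q = \sum_\lambda |\varphi_\lambda(1)|^2 \delta_\lambda$, with eigenfunctions normalized so that $\int_0^1 \varphi_\lambda^t R \varphi_\lambda\,dt = 1$. The starting point is the identity relating the right spectral measure to the transfer-matrix solution: let $\psi_\lambda$ be the solution of $\btau_\beta \psi_\lambda = \lambda \psi_\lambda$ with initial condition $[1,0]^t$ at $0$. Then $\lambda$ is an eigenvalue exactly when $\psi_\lambda(1)$ is proportional to the right boundary vector $[-q,-1]^t$, and the weight $|\varphi_\lambda(1)|^2$ is governed by the value of $\psi_\lambda(1)$ together with the derivative in $\lambda$ of the secular function — this is the standard Dirac-operator analogue of the classical formula $\mu(\{\lambda\}) = 1/(\|\psi_\lambda\|^2)$ for the spectral weight, which follows from the self-adjointness of $\btau_\beta$ and integration by parts. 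I would first establish this formula carefully, since it controls both the location and the mass of the atoms.

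Next I would carry out the biasing. Biasing $\mu_q$ by $\mu_q((-\eps,\eps))$ and letting $\eps\to 0$ amounts, heuristically, to biasing by the total weight that $\mu_q$ places near $0$; because the atoms of $\mu_q$ near $0$ are simple and isolated, the limit of the biased law is the law of $\mu_q$ biased by the weight $\mu_q(\{\lambda_0\})$ of the atom $\lambda_0$ nearest $0$, \emph{together with} a size-biased re-centering that effectively conditions on there being an eigenvalue exactly at $0$. Conditioning on $\lambda = 0$ being an eigenvalue forces $\psi_0(1) \parallel [-q,-1]^t$; since $\psi_0(1)$ is a deterministic function of $R$ (the equation at $\lambda=0$ is explicit), this pins down $q$ as a function of $R$ and — after accounting for the Jacobian coming from the regular density of $q$ and the $\lambda$-derivative of the secular function in the spectral-weight formula — the reweighting in $q$ cancels against the weight of the atom, leaving precisely the law in which the right boundary condition is the one making $0$ an eigenvalue. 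But $\psi_0$ is exactly the solution that the $[1,0]^t$-at-$1$ boundary condition selects: with both boundary data being (proportional to) $[1,0]^t$, one checks $\psi_0(1) \parallel [1,0]^t$, i.e. the limiting right boundary vector is $[1,0]^t$. Hence the biased limit is $\mu_\infty$.

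I would organize this as: (1) the spectral-weight formula for $\mu_q$ in terms of $\psi_\lambda$ and the secular function; (2) an explicit change of variables from $q$ to the atom $\lambda_0$ near $0$ (or to the secular function value at $0$), valid because $q$ has a regular density and the map is locally a diffeomorphism with computable Jacobian; (3) showing the $\eps\to 0$ limit of the biased law exists by dominated convergence, using that $\mu_q((-\eps,\eps))/\,(2\eps) \to$ density at $0$ and that only the single nearest atom contributes in the limit; (4) identifying the resulting reweighted law of $R$ (and of the boundary data) as the one for which $0$ is deterministically an eigenvalue, and observing this is the $[1,0]^t$-at-$1$ operator, i.e. $\mu_\infty$. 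Steps (3) and (4) are where I would state and use a precise version — this is what Proposition \ref{prop:Dir_Spec_weight} is for — including the needed regularity of the distribution of $q$ and suitable integrability of the secular function near $0$.

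The main obstacle I anticipate is step (3): controlling the limit $\eps \to 0$ uniformly. One must show that the contribution to $\mu_q((-\eps,\eps))$ from atoms other than the nearest one is negligible after biasing, and that the biased densities form a uniformly integrable family so that the distributional limit genuinely exists and equals the naive pointwise limit. This requires quantitative control on the spacing of eigenvalues near $0$ and on the oscillation of $\psi_\lambda(1)$ in $\lambda$ — i.e., a lower bound on the derivative of the secular function and an upper bound on how many eigenvalues can cluster in $(-\eps,\eps)$ — both of which should follow from the structure of the Dirac operator \eqref{tau_0} and the integrability properties of $R$, but will need care to make rigorous under only the stated regularity of the law of $q$.
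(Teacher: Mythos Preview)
Your outline is sound and would lead to a proof, but the paper's argument sidesteps exactly the obstacle you flag in step (3). Rather than isolating the nearest atom and controlling the others, the paper exploits two structural facts you have not used: (i) the right spectral measure depends on the boundary parameter only through the angle $u=-2\operatorname{arccot}(q)\in(-\pi,\pi]$, and is \emph{periodic} in $u$ with period $2\pi$; (ii) the spectral weight at an eigenvalue $\lambda$ equals $2(\alpha^{-1})'(u)$ evaluated at the corresponding $u$, i.e.\ the weight is literally the Jacobian of the map $u\mapsto\lambda$. Writing the spectral measure as $\Xi_u=2\sum_{w\in 2\pi\ZZ+u}\lambda'(w)\delta_{\lambda(w)}$ with $\lambda=\alpha(1,\cdot)^{-1}$, one computes for any bounded test function $\varphi$
\[
E\bigl[\varphi(\cdot)\,\Xi_U(A_\eps)\bigr]
=E\int_{\R}\varphi(\cdot)\,\lambda'(u)\,\ind_{A_\eps}(\lambda(u))\,f(\{u\}_{2\pi})\,du
=2\eps\,E\bigl[\varphi(\cdot)\,f(\{\lambda^{-1}(X_\eps)\}_{2\pi})\bigr],
\]
where $f$ is the density of $\{U\}_{2\pi}$ and $X_\eps$ is uniform on $(-\eps,\eps)$. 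The first equality turns the sum over atoms plus the integral over the density of $U$ into a single integral over $\R$ (this is where periodicity enters); the second is the change of variables $x=\lambda(u)$, in which the spectral weight cancels the Jacobian exactly. Now $\lambda^{-1}(X_\eps)\to\lambda^{-1}(0)=0$ in probability and bounded convergence finishes. Since $u=0$ corresponds to the boundary condition $[1,0]^t$, the limit is $\mu_\infty$.

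Compared to your route, this avoids any discussion of eigenvalue spacing, uniform integrability, or ``nearest atom'': every atom in $(-\eps,\eps)$ contributes, but after the change of variables they all collapse into the single point $X_\eps$. Your Jacobian intuition in step (2) is exactly right, but it is cleaner to run it globally in the angle variable than locally near the nearest eigenvalue. Your identification $\psi_0(1)=[1,0]^t$ is correct and is equivalent to the paper's $\alpha(1,0)=0$.
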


Another important ingredient of the proof of Theorem \ref{thm:Sine_Palm} is the following. 
\begin{proposition} \label{prop:sine_weights}
The weights of the right spectral measure of the  $\btau_{\beta}$ operator are independent of each other and from the sequence of eigenvalues. The weights  have gamma distribution  with shape parameter $\beta/2$ and mean 2. 
\end{proposition}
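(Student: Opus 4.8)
I want to prove that the weights of the right spectral measure of $\btau_\beta$ are i.i.d. Gamma$(\beta/2, \text{mean } 2)$, independent of the eigenvalues. Let me think about how to establish this.

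The natural approach is to pass to the finite-dimensional approximation and take a limit. The $\Sineb$ process arises as the bulk limit of the circular beta ensemble $\Circop$ (equivalently the Verblunsky/CMV picture), and $\btau_\beta$ arises as the operator limit of the associated finite Dirac operators (this is the content of \cite{BVBV_sbo}). So the plan is:

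First, establish the analogous statement at the finite level. For the circular beta ensemble realized via a CMV matrix with Verblunsky coefficients, the spectral measure at the cyclic vector $e_1$ is a probability measure $\sum_k w_k \delta_{e^{i\theta_k}}$ on the unit circle, and a classical fact (going back to the work on the Killip–Nenciu / Verblunsky parametrization of $\beta$-ensembles) is that for the circular $\beta$-ensemble the weight vector $(w_1,\dots,w_n)$ is Dirichlet$(\beta/2,\dots,\beta/2)$ and independent of the eigenangles $(\theta_1,\dots,\theta_n)$. In the Dirac/canonical-systems language this is the finite version of our claim, with the normalization $\int_0^1 \varphi^t R\varphi\,dt=1$ corresponding to the unit-mass normalization of the spectral measure.

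Second, take the bulk scaling limit. Under the scaling that produces $\Sineb$, a window of $\asymp 1$ eigenangles near a fixed reference point is rescaled to an $\asymp 1$ window of eigenvalues of $\btau_\beta$, and $n$ of the Dirichlet weights (each of size $\asymp 1/n$) get rescaled by $n$ so that a fixed finite collection of them converges. The standard fact that a block of $k$ coordinates of a Dirichlet$(\beta/2,\dots,\beta/2)$ vector on $n$ components, multiplied by $n$, converges jointly to $k$ independent Gamma$(\beta/2)$ variables (with the right scale to give mean $2$ after matching the intensity normalization) gives exactly the claimed distribution in the limit; independence from the eigenvalues is inherited from the finite level and preserved under the limit. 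To make this rigorous I would use the operator-level convergence from \cite{BVBV_sbo}, which controls the eigenfunctions and hence the spectral weights (expressed as boundary values of $L^2$-normalized eigenfunctions), so that the finite spectral measures converge to $\mu_{\textup{right}}$ of $\btau_\beta$ in the appropriate vague sense together with their atoms and weights.

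The main obstacle is the last step: one must show that the spectral weights — which are nonlinear, $L^2$-normalization-dependent functionals of the eigenfunctions — actually pass to the limit under the convergence established in \cite{BVBV_sbo}, rather than merely the eigenvalue point process converging. Concretely, I expect to need a uniform-integrability / tightness argument ensuring no mass of the normalizing integral $\int_0^1 \varphi^t R\varphi\,dt$ escapes, plus a continuity statement that the boundary value $\varphi_\lambda(1)$ depends continuously (in the relevant topology) on the coefficient data. An alternative route that sidesteps the limit is to argue directly at the continuum level: condition on the eigenvalues, use that the $\btau_\beta$ operator with the given random $R$ has an explicit transfer-matrix representation, and identify the conditional law of the weights via the structure of the boundary map — but this still ultimately relies on the same finite-to-infinite Dirichlet-to-Gamma degeneration, so I regard the scaling-limit argument as the cleanest path.
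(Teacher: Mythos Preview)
Your proposal is correct and matches the paper's approach: pass from the finite Killip--Nenciu Dirichlet weights to i.i.d.\ Gamma weights via the operator limit, with the main technical work being the convergence of the spectral weights rather than just the eigenvalues. The paper resolves the obstacle you identify through a continuity lemma for spectral measures (Lemma~\ref{lem:spec_convergence}, based on the phase-function representation of the weights in Lemma~\ref{lem:phase_spectral1}) applied under the explicit coupling of \cite{BVBV_op} and the estimates of \cite{BVBV_szeta}, together with a time-reversal (Proposition~\ref{prop:tau_sine}) to pass between the \emph{left} spectral measure of $\Circop$---which is what Proposition~\ref{prop:Dir_discrete} identifies with the lifted Killip--Nenciu measure---and the \emph{right} spectral measure of $\btau_\beta$.
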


This is closely related to Theorem 9 in \cite{najnudel2021bead}, where the spectral measure is implicitly used to define the bead process, a Markov process with stationary distribution given by the $\Sineb$  point process. The connection is through the relationship between spectral measures and rank-one perturbations. Rather than exploring this connection, we deduce Proposition \ref{prop:sine_weights} from its finite-$n$ analogue. 


For $n\ge 1$ and $\beta>0$ the size $n$ circular beta-ensemble is the distribution of $n$ points on the unit circle with probability density 
\begin{align}
   \frac{1}{Z_{n,\beta}} \prod_{j<k\le n} |z_j-z_k|^\beta.
\end{align}
The \textbf{Killip-Nenciu measure} $\mu_{n,\beta}^{\textup{KN}}$ for given $n\ge 1$ and $\beta>0$ is the random probability measure where the support is given by the size $n$ circular beta-ensemble, and the weights  have Dirichlet$(\beta/2, \dots, \beta/2)$ joint distribution and are independent of the support.

In general, a finitely supported probability measure on the unit circle can be characterized by its modified Verblunski coefficients from the theory of orthogonal polynomials. When a probability measure is supported on $n$ points, all but the first $n$ modified Verblunski coefficients vanish. The first $n-1$ modified Verblunski coefficients are all in the unit disk $\UU=\{z:|z|<1\}$, with the last one being on the unit circle $\partial \UU$ (see \cite{KillipNenciu}, \cite{BNR2009}). 


The joint distribution of the modified Verblunski coefficients $\gamma_k, 0\le k\le n-1$ of the  Killip-Nenciu measure was described in \cite{KillipNenciu} and \cite{BNR2009}. It was shown that $\gamma_k, 0\le k\le n-1$ are independent,  $\gamma_k$  have density on $\mathbb U$ proportional to 
\begin{align}\label{eq:verb_circ}
(1-|z|^2)^{\frac{\beta}{2}(n-k-1)-1}, \qquad \text{for} \quad 0\le k \le n-2,
\end{align}
and $\gamma_{n-1}$ is uniform on $\partial \UU$.
Note that the distribution of $|\gamma_k|^2$ is Beta$(1,\frac{\beta}{2}(n-k-1))$, and $\gamma_k$ has rotationally invariant distribution.


In \cite{BVBV_sbo} it was shown that the spectral information of  a probability measure on the unit circle can also be described via a Dirac operator of the form \eqref{tau_0},  with $R$ defined from piece-wise constant paths  $x, y$ via \eqref{eq:R_x_y}, that are  built using the modified Verblunski coefficients.  See  Section \ref{sec:finite} and  Proposition \ref{prop:discrete_op_1} below. We extend the connection further  by the following lemma, see Proposition \ref{prop:Dir_discrete} for the precise formulation.

\begin{lemma}\label{lem:spectral_lift}
Consider a probability measure $\mu$ supported on $e^{i \lambda_j}, 1\le j\le n$, and the corresponding Dirac operator $\btau$. Then the left spectral measure of $\btau$ is the same as the lifting of $\mu$ to $\mathbb R$ stretched by $n$ and multiplied by $2n$: 
\[
\mu_{\textup{left}, \btau}(n \lambda_j+2n k \pi)=2n \mu(e^{i \lambda_j}), \qquad k\in \ZZ, 1\le j\le n.
\]
\end{lemma}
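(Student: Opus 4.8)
The plan is to connect the two descriptions of the spectral data of a finitely supported measure $\mu$ on the unit circle: the classical one through orthogonal polynomials (Verblunsky coefficients, and the associated CMV matrix or the transfer matrix recursion), and the Dirac operator description from \cite{BVBV_sbo}. Since $\mu$ is supported on $n$ points $e^{i\lambda_j}$, the space of $L^2(\mu)$ functions is $n$-dimensional and the multiplication-by-$z$ operator $U$ is unitary with eigenvalues $e^{i\lambda_j}$; the spectral measure of $U$ at the cyclic vector $\mathbf 1$ (the constant function $1$) is exactly $\mu$, i.e. $\mu(e^{i\lambda_j}) = |\langle \mathbf 1, \varphi_j\rangle|^2$ where $\varphi_j$ is the $L^2(\mu)$-normalized eigenvector. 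The point is to identify, under the correspondence of Proposition~\ref{prop:discrete_op_1}, the eigenfunctions of $\btau$ evaluated at the left endpoint $0$ with these inner products $\langle \mathbf 1, \varphi_j\rangle$, up to the explicit rescaling.

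First I would recall precisely the construction in Section~\ref{sec:finite}: the piecewise constant paths $x,y$ on $(0,1]$ are built from the modified Verblunsky coefficients $\gamma_0,\dots,\gamma_{n-1}$, and the eigenvalue equation $\btau u = \lambda u$ becomes, on each of the $n$ constant pieces, a linear ODE whose solution is governed by a transfer matrix. The product of these transfer matrices over the $n$ pieces should coincide (after the appropriate change of spectral variable $z = e^{i\lambda/n}$, which accounts for the ``stretch by $n$'') with the Szeg\H{o} recursion / CMV transfer matrix product for $\mu$ evaluated at $z$. This identification of transfer matrices is the computational heart of the argument; it is essentially already present in \cite{BVBV_sbo}, so I would cite it and only highlight the bookkeeping: the boundary vector $[1,0]^t$ at $0$ corresponds to initializing the Szeg\H{o} recursion, and an eigenvalue occurs precisely when the boundary condition at $1$ is also met, which matches $z^n$ being an eigenvalue of $U$, i.e. $\lambda \in \{n\lambda_j + 2nk\pi\}$. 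This explains why the left spectral measure is supported on exactly that lifted, stretched set.

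Next I would compute the weights. For a Dirac operator, $\mu_{\textup{left},\btau}$ assigns to eigenvalue $\lambda$ the weight $|\varphi_\lambda(0)|^2$ where $\varphi_\lambda$ is normalized by $\int_0^1 \varphi_\lambda^t R\,\varphi_\lambda\,dt = 1$. I would show that this integral norm, for the solution started from $[1,0]^t$ at $0$ and continued by the transfer matrices, equals a Christoffel--Darboux-type sum $\sum_{k=0}^{n-1}|\Phi_k(z)|^2$ (up to constants) evaluated at $z = e^{i\lambda_j/n}$, which is exactly the reciprocal of the weight $\mu(e^{i\lambda_j})$ up to an explicit factor. The factor $2n$ arises from two sources: the normalization constant $\tfrac12$ in the definition $R = X^tX/(2\det X)$, and the fact that each of the $n$ support points $e^{i\lambda_j}$ lifts to a whole $2n\pi$-periodic family on $\R$, so the measure $\mu$ (total mass $1$) must be spread with the stated multiplicity; tracking these constants carefully gives the stated identity $\mu_{\textup{left},\btau}(n\lambda_j + 2nk\pi) = 2n\,\mu(e^{i\lambda_j})$. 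One clean way to pin down the constant without a long computation is to test on the trivial case: the uniform-type measure where all $\gamma_k$ for $k<n-1$ vanish, so $\mu$ is supported on the $n$-th roots of a point with equal weights $1/n$, and check that both sides agree there; combined with the transfer-matrix identification this forces the constant in general.

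The main obstacle I anticipate is the constant-tracking, particularly reconciling three different normalizations: the $L^2(\mu)$ normalization of eigenvectors of $U$, the $\int \varphi^t R\varphi\,dt = 1$ normalization of Dirac eigenfunctions, and the change of variables $\lambda \mapsto z = e^{i\lambda/n}$ with its Jacobian. None of these steps is deep, but getting the factor of $2n$ exactly right (rather than $n$ or $2n^2$) requires care; I would isolate this in a lemma that expresses $\int_0^1 \varphi_\lambda^t R \varphi_\lambda\,dt$ in terms of the CMV transfer matrix products, and then use the known Killip--Nenciu/Szeg\H{o} formula for the spectral weights of a measure on the circle. Everything else — the support identification and the structural claim that the left spectral measure corresponds to the spectral measure at the cyclic vector $\mathbf 1$ — follows from the transfer-matrix dictionary already established in \cite{BVBV_sbo} and recalled in Proposition~\ref{prop:discrete_op_1}.
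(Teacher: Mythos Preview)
Your proposal is correct and follows essentially the same route as the paper's proof (stated there as Proposition~\ref{prop:Dir_discrete}): identify the Dirac eigenfunction $H(\cdot,\lambda)$ with the Szeg\H{o}/transfer-matrix recursion for the orthogonal polynomials at $z=e^{i\lambda/n}$ (the paper cites \cite{BVBV_szeta} for the explicit formula), then compute $\|H(\cdot,\lambda)\|_R^2$ as a discrete Christoffel sum and match it to the reciprocal spectral weight $\mu(e^{i\lambda_j})^{-1}$.

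One caution: the integral norm is not $\sum_k |\Phi_k(z)|^2$ times a single overall constant, but rather $\tfrac{1}{2n}\sum_{k=0}^{n-1}\tfrac{1}{\Im z_k}|\Psi_k(z)|^2$ with $\Psi_k=\Phi_k/\Phi_k(1)$, and the $k$-dependent weights $1/\Im z_k$ are essential. The paper closes the computation by proving the identity $\Im z_k=\|\Psi_k\|_\nu^2$, which is exactly what converts the sum into the Christoffel function $\sum_k |\Psi_k|^2/\|\Psi_k\|_\nu^2 = \mu(e^{i\lambda_j})^{-1}$. Your proposed shortcut of pinning down constants via the trivial case (all $\gamma_k=0$ for $k<n-1$) would not detect this, since in that example $\Im z_k\equiv 1\equiv\|\Psi_k\|_\nu^2$; you would need a second, non-trivial test case, or simply prove the identity directly from the recursions \eqref{eq:z_rec} and the standard formula $\|\Phi_k\|_\nu^2=\prod_{\ell<k}(1-|\gamma_\ell|^2)$, as the paper does.
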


The factor $2$ is related to our convention in \eqref{eq:R_x_y}. This convention comes from the carousel representation, see \cite{BVBV_sbo}, in which the rotation speed is then given by the eigenvalue $\lambda$. Equivalently, this is related to the intensity $1/(2\pi)$ of the Sine$_\beta$ process. 


The Palm measure corresponding to a measure with  random Verblunski coefficients can be explicitly characterized in certain cases. The following is a special case of Proposition \ref{prop:path} below.

\begin{proposition} \label{prop:Verblunski_biasing} 
Let $\mu$ be a random probability measure supported on $n$ points on $\partial \UU$.
Assume that its modified Verblunsky coefficients $\gamma_i, 0\le i\le n-1$ are independent and have rotationally invariant law. Then the Palm measure of $\mu$ agrees with the $\eps\to 0$ limit of $\mu$ biased by the weight of the arc $\{e^{i\theta}, \theta\in (-\eps,\eps)\}$. The modified Verblunski coefficients of the Palm measure are given by 
\begin{align}\label{eq:gamma'}
\gamma'_i
=-\gamma_i\frac{1-\bar \gamma_i}{1-\gamma_i}, \qquad n=0,\ldots, n-2,\qquad \gamma'_{n-1}=1.
\end{align}
\end{proposition}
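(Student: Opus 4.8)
The plan is to reduce everything to a single rank-one / boundary computation on the Verblunsky coefficients, and then assemble the three ingredients already stated: Proposition \ref{prop:Dir_Spec_weight_heur} (biasing by the weight of zero converts the right spectral measure with a random boundary parameter into the one with the degenerate boundary $[1,0]^t$), Lemma \ref{lem:spectral_lift} (the left spectral measure of the Dirac operator is the stretched/rescaled lift of $\mu$ to $\R$), and the dictionary of \cite{BVBV_sbo} between probability measures on $\partial\UU$, their modified Verblunsky coefficients, and Dirac operators of the form \eqref{tau_0}. First I would record that, since $\mu$ is supported on $n$ points and the last modified Verblunsky coefficient $\gamma_{n-1}$ is the one living on $\partial\UU$, the parameter $\gamma_{n-1}$ plays exactly the role of the boundary parameter $q$ (a Cayley-type transform relating $\gamma_{n-1}\in\partial\UU$ to the real boundary data $[-q,-1]^t$); I would make this correspondence explicit using the formulas of Section \ref{sec:finite}. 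Under the rotational-invariance hypothesis on $\gamma_{n-1}$, this $q$ has the "sufficiently regular" distribution required by Proposition \ref{prop:Dir_Spec_weight_heur}.

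Next, I would identify "biasing $\mu$ by the weight of the arc $(-\eps,\eps)$" with "biasing the left spectral measure of $\btau$ by its mass near $0$": by Lemma \ref{lem:spectral_lift} the weight $\mu(e^{i\lambda_j})$ equals $\tfrac1{2n}\mu_{\mathrm{left},\btau}(n\lambda_j)$, so the arc weight $\mu((-\eps,\eps))$ is a constant multiple of $\mu_{\mathrm{left},\btau}((-n\eps,n\eps))$ up to the contribution of points whose lift lands near $0$, and as $\eps\to0$ only the point nearest $0$ survives — this is precisely the Palm-measure biasing. Now I would invoke the time-reversal / left-right symmetry of Dirac operators of the form \eqref{tau_0}: reading the operator from $1$ to $0$ interchanges $\mu_{\mathrm{left}}$ and $\mu_{\mathrm{right}}$ and reverses the order of the Verblunsky coefficients, so the statement about the left spectral measure becomes the statement of Proposition \ref{prop:Dir_Spec_weight_heur} about the right spectral measure. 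Applying that proposition, the $\eps\to0$ limit of the biased measure is the measure whose Dirac operator has boundary condition $[1,0]^t$ at the relevant end, i.e. the degenerate value of the boundary parameter; translating back through the Verblunsky dictionary, this forces $\gamma'_{n-1}=1$.

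It then remains to compute the first $n-1$ coefficients $\gamma'_0,\dots,\gamma'_{n-2}$. Here I would argue that the biasing only touches the boundary parameter at the $(n-1)$st level: biasing by the weight of $0$ is, on the level of the operator data $(R,q)$, a change that leaves $R$ (hence $\gamma_0,\dots,\gamma_{n-2}$ up to deterministic reparametrization) unaffected as a \emph{family}, but the \emph{spectral}-measure weight carries a Jacobian factor that, when pushed through the Szeg\H{o}-type recursion relating the spectral measure to the Verblunsky coefficients, produces the stated Cayley transform $\gamma_i\mapsto-\gamma_i(1-\bar\gamma_i)/(1-\gamma_i)$ at each level. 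Concretely I would track how the Christoffel–Darboux / Schur-function recursion for the reversed measure transforms under multiplication of the spectral measure by $|{\cdot}|^2$ near a point; since each $\gamma_i$ enters through a single Blaschke-type factor, the induced map on $\gamma_i$ is a Möbius map fixing $\pm1$ and swapping orientation, which is exactly \eqref{eq:gamma'}. I expect this last bookkeeping step — getting the precise form of the Möbius transformation and checking it is consistent simultaneously at all $n-1$ levels rather than just the top one — to be the main obstacle; the conceptual reduction via Lemma \ref{lem:spectral_lift} and Proposition \ref{prop:Dir_Spec_weight_heur} is comparatively soft, but one must be careful that the rotational invariance of every $\gamma_i$ (not just $\gamma_{n-1}$) is genuinely used, since it is what guarantees the biased coefficients are still a legitimate (and independent) family.
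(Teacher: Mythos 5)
Your route is genuinely different from the paper's (which works through Aleksandrov measures, Section \ref{sec:Aleks}), but it has a gap that I think is fatal as written. The central issue is a left/right mismatch. Lemma \ref{lem:spectral_lift} identifies $\mu$ with the \emph{left} spectral measure of $\btau$, while Proposition \ref{prop:Dir_Spec_weight_heur}/\ref{prop:Dir_Spec_weight} is a statement about the \emph{right} one: Lemma \ref{lem:pullback} applies because the right spectral weights are $2/\partial_\lambda\alpha(1,\lambda)$, i.e.\ the measure takes the form $\Xi_U$ of \eqref{def:Xi} with all the randomness near $0$ carried by the boundary parameter $U$ and none by $R$. The left spectral weights are instead $1/\|H(\cdot,\lambda)\|_R^2$, which depend on $R$ in an essential way; Lemma \ref{lem:pullback} does not apply to them, and biasing by the mass near $0$ genuinely tilts the law of $R$. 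Your proposed fix via time reversal does not repair this: after Lemma \ref{lem:reversal}, the random boundary condition of $\tilde\btau$ sits at the \emph{left} end and the fixed one at the right, the opposite of what Proposition \ref{prop:Dir_Spec_weight} requires; rotating the left condition back to $[1,0]^t$ by some $T_r$ forces $r$ to depend on $z_n$, so the rotated path and the rotated boundary parameter become dependent, again violating the hypotheses. More pointedly: if your reduction did apply, Proposition \ref{prop:Dir_Spec_weight} would yield that the law of $(\gamma_0,\dots,\gamma_{n-2})$ is \emph{unchanged} by the biasing. That contradicts the very formula $\gamma_i'\ed\gamma_i^\iota$ you are trying to prove, since by Lemma \ref{lem:iota} the law of $\gamma_i^\iota$ is the law of $\gamma_i$ tilted by $\Poi(\gamma_i,1)\not\equiv1$.

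Your final paragraph (``Jacobian factor'', ``Christoffel--Darboux'') gestures at the correct answer --- the missing Radon--Nikodym density on $(\gamma_0,\dots,\gamma_{n-2})$ is $\prod_{i}\Poi(\gamma_i,1)=\Poi(b_{n-1},1)$, and Lemma \ref{lem:iota} converts this tilting into $\gamma_i\mapsto\gamma_i^\iota$ --- but it is not a proof, and it is in tension with your earlier claim that $R$ is unaffected. Note also that $\gamma\mapsto\gamma^\iota$ is not ``a Möbius map fixing $\pm1$'': the paper remarks right after \eqref{eq:iota} that it is not holomorphic (it preserves $|\gamma|$ but is not a rotation), and restricted to each circle $|\gamma|=r$ it acts by a Möbius map $g_r$ that \emph{swaps} $1$ and $-1$. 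The paper's proof, via Proposition \ref{prop:path}, avoids all Dirac/spectral-measure computations: Corollary \ref{cor:cond_1} shows that biasing $\mu$ by the weight of $1$ produces the Aleksandrov measure $\mu_{b_n}$, and then one chains the equivalences (condition on $b_n=1$) $\Leftrightarrow$ (tilt by $\Poi(b_{n-1},1)$) $\Leftrightarrow$ (tilt by $\prod_i\Poi(\gamma_i,1)$) $\Leftrightarrow$ (replace each $\gamma_i$ by $\gamma_i^\iota$), the last step by Lemma \ref{lem:iota}. If you wish to salvage the Dirac-operator route, you would need a left-spectral analogue of Proposition \ref{prop:Dir_Spec_weight} in which $R$ is explicitly tilted by a factor involving $\|H(\cdot,0)\|_R^{-2}$, a harder calculation than the Aleksandrov-measure argument.
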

In the setting of probability measures on the unit circle the analogue of boundary condition for Dirac operators is given by the notion of Aleksandrov measures.  The modified Verblunski coefficients of Aleksandrov measures have a more complex description, leading to formulas such as \eqref{eq:gamma'}, see Section \ref{sec:Aleks}. 

Proposition \ref{prop:Verblunski_biasing} provides an explicit description of Verblunski coefficients of the Palm measure of the Killip-Nenciu measure as follows.


\begin{proposition}\label{prop:biased_V}
Let $\mu$ be chosen from the Killip-Nenciu law with parameters $\beta,n$, let $\tilde \mu$ be the uniform probability measure on the support of $\mu$, and
let $\nu$ be $\mu$ biased by the weight of 1. 
Let $X$ be picked from $\mu$, and  $Y$ be picked from the $\tilde \mu$.
 Then $\operatorname{supp} \mu(X\cdot)$,   $\operatorname{supp} \tilde \mu(Y\cdot)$, and $\operatorname{supp} \nu(\cdot)$ all have the same law. 
 The Verblunski coefficients $\gamma_k'$ of $\nu$ are independent and have density proportional to
\begin{align}\label{eq:verb_nu}
  (1-|z|^2)^{\frac{\beta}{2}(n-k-1)} |1-z|^{-2}, \qquad \text{for} \quad 0\le k \le n-2,  
\end{align}
and $\gamma_{n-1}'=1$.
\end{proposition}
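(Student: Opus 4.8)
The plan is to derive everything from Proposition~\ref{prop:Verblunski_biasing} together with one explicit change of variables. First I would verify its hypotheses for $\mu=\mu_{n,\beta}^{\textup{KN}}$: by \eqref{eq:verb_circ} the modified Verblunsky coefficients $\gamma_0,\dots,\gamma_{n-1}$ of $\mu$ are independent, $\gamma_k$ for $0\le k\le n-2$ has the rotationally invariant density on $\UU$ proportional to $(1-|z|^2)^{\frac\beta2(n-k-1)-1}$, and $\gamma_{n-1}$ is uniform on $\partial\UU$. Hence Proposition~\ref{prop:Verblunski_biasing} applies: it identifies $\nu$ with the Palm measure of $\mu$ and shows that the modified Verblunsky coefficients of $\nu$ are $\gamma_k'=f(\gamma_k)$ for $0\le k\le n-2$ and $\gamma_{n-1}'=1$, where $f(z)=-z\,\frac{1-\bar z}{1-z}$. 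Since $f$ is a fixed map and the $\gamma_k$ are independent, the $\gamma_k'$ are independent. It then remains to prove the support identity and to compute the law of $f(\gamma_k)$.

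For the support identity, I would show that $\operatorname{supp}\mu(X\,\cdot)$, $\operatorname{supp}\tilde\mu(Y\,\cdot)$ and $\operatorname{supp}\nu$ all have the law of the Palm measure of the size-$n$ circular $\beta$-ensemble. The measure $\operatorname{supp}\tilde\mu(Y\,\cdot)$ is, by definition, this ensemble recentered at a uniformly chosen one of its points, i.e.\ its Palm measure. For $\operatorname{supp}\mu(X\,\cdot)$, observe that conditionally on the support the $\mu$-weighted point $X$ is uniform on the support, because the Dirichlet weights are exchangeable and independent of the support; hence $X^{-1}\operatorname{supp}\mu\ed Y^{-1}\operatorname{supp}\mu$. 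For $\operatorname{supp}\nu$, use that $\nu$ is the Palm measure of the random measure $\mu$ (Proposition~\ref{prop:Verblunski_biasing}); since the weights $w_j$ are independent of the support and exchangeable, the Campbell--Mecke identity for $\mu=\sum_j w_j\delta_{z_j}$ shows that the support of this Palm measure has the same law as $Y^{-1}\operatorname{supp}\mu$, the Palm measure of the support point process. This yields the three-fold identity.

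To obtain \eqref{eq:verb_nu}, I would push the density $(1-|z|^2)^{\frac\beta2(n-k-1)-1}$ of $\gamma_k$ forward under $f$, using the elementary identities, valid on $\UU$: $|f(z)|=|z|$ (because $|1-\bar z|=|1-z|$); $1-f(z)=\dfrac{1-|z|^2}{1-z}$; the real Jacobian of $f$ equals $\dfrac{1-|z|^2}{|1-z|^2}>0$ (a short Wirtinger computation); and $f$ is a bijection of $\UU$ with inverse $w\mapsto\dfrac{|w|^2-w}{1-w}$. Writing $m=\frac\beta2(n-k-1)$ and $z=f^{-1}(w)$, so that $|z|=|w|$ and $|1-z|=\frac{1-|w|^2}{|1-w|}$, the change-of-variables formula gives for the density of $\gamma_k'$
\begin{align*}
(1-|z|^2)^{m-1}\cdot\frac{1-|z|^2}{|1-z|^2}
&=(1-|w|^2)^{m-1}\cdot\frac{1-|w|^2}{|1-w|^2}\\
&=\frac{(1-|w|^2)^{m}}{|1-w|^2},
\end{align*}
which is \eqref{eq:verb_nu}; integrability over $\UU$ is automatic since $f$ is a bijection, and can also be checked directly via the Poisson-kernel integral.

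The routine parts are the verification of the hypotheses of Proposition~\ref{prop:Verblunski_biasing} and the two displayed computations. I expect the main obstacle to be the support identity, and in particular expressing the Palm measure of the random probability measure $\mu$ in terms of the Palm measure of its support and matching it with the $\eps\to0$ limit in Proposition~\ref{prop:Verblunski_biasing}; this relies on the independence of the Dirichlet weights from the support together with standard Palm-calculus facts for finite rotation-invariant point processes.
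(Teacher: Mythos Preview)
Your approach is essentially the paper's: both reduce to Proposition~\ref{prop:path} (whose special case is Proposition~\ref{prop:Verblunski_biasing}) to identify $\nu$ with the Palm measure and to describe its modified Verblunsky coefficients, and both deduce the support identity from the fact that the Dirichlet weights are independent of the support with common mean $1/n$. The paper's support argument is marginally shorter: since $\nu$ is the Palm measure of $\mu$ and, by Definition~\ref{def:palm} (via Remark~\ref{rem:circle}), $\mu(X\cdot)$ \emph{is} the Palm measure of $\mu$, one gets $\operatorname{supp}\mu(X\cdot)\ed\operatorname{supp}\nu$ directly, and then the weight/support independence gives $\operatorname{supp}\mu(X\cdot)\ed\operatorname{supp}\tilde\mu(Y\cdot)$. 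Your route through the Palm measure of the underlying point process reaches the same conclusion.

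For the density, the paper uses description (3) of Proposition~\ref{prop:path}: the law of $(\gamma_0,\dots,\gamma_{n-2})$ biased by $\prod_i \Poi(\gamma_i,1)$, which immediately multiplies \eqref{eq:verb_circ} by $\frac{1-|z|^2}{|1-z|^2}$ and gives \eqref{eq:verb_nu} with no change of variables. You instead use description (4) and push forward by $f$; this is equivalent (the paper's Lemma~\ref{lem:iota} is exactly the bridge), but your displayed computation has a slip. With $z=f^{-1}(w)$, the Jacobian factor in the change of variables is $|\det Df^{-1}(w)|$, which equals $\frac{1-|w|^2}{|1-w|^2}$ (since $f$ is an involution) or, equivalently, $\frac{|1-z|^2}{1-|z|^2}$; it is \emph{not} $\frac{1-|z|^2}{|1-z|^2}$. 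As written, your first displayed equality
\[
(1-|z|^2)^{m-1}\cdot\frac{1-|z|^2}{|1-z|^2}=(1-|w|^2)^{m-1}\cdot\frac{1-|w|^2}{|1-w|^2}
\]
is false, because $|1-z|\neq|1-w|$ while $|z|=|w|$. Replacing the Jacobian by the correct $|\det Df^{-1}(w)|=\frac{1-|w|^2}{|1-w|^2}$ (and using $g(z)=g(w)$ from rotational invariance) gives $\frac{(1-|w|^2)^m}{|1-w|^2}$ directly, so your conclusion stands once this is fixed.
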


The distribution of $\nu$ can be computed directly by noting that its support is the size $n$ circular beta-ensemble conditioned to have a point at 1, and the weights are just a size-biased version of the Dirichlet$(\beta/2, \dots, \beta/2)$ distribution.

The support of the measure $\nu$ has a point at 1, and if we remove that point then the joint density of the remaining points 
is proportional to
\begin{align}\label{eq:CJ}
\prod_{i<j\le n-1} |z_i-z_j|^\beta \prod_{j=1}^{n-1} |1-z_j|^\beta.
\end{align}
This 
is a circular Jacobi beta-ensemble of size $n-1$, with parameter $\delta=\beta/2$.

The weights of $\nu$ are independent of the support, and they are Dirichlet distributed with parameters $\beta/2$ ($n-1$ times) and $\beta/2+1$ (once). The last weight belongs to the point at $1$.


If we remove the point at 1 from the $\nu$ and renormalize the weights to sum up to 1, then the support and the weights are still independent, with the $n-1$ remaining weights distributed as Dirichlet$(\beta/2, \dots, \beta/2)$.
The modified Verblunski coefficients $\gamma_k, 0\le k\le n-2$ of this measure were described in \cite{BNR2009}. These are independent of each other, and the density of $\gamma_k$ on $\UU$ is proportional to 
$$
(1-|z|^2)^{\frac{\beta}{2}(n-k-2)-1}|1-z|^{\beta}, \quad \text{for} \quad 0\le k\le n-1,
$$
and the density of $\gamma_{n-2}$ on $\partial \UU$ is proportional to
$
|1-z|^{\beta}
$.
It would be interesting to find a direct derivation of this fact. 

\begin{problem}
Find a simple derivation of the joint distribution of the modified Verblunsky coefficients for the random probability measure obtained by removing the point 1 from $\nu$ (and renormalizing),  directly from Proposition  \ref{prop:biased_V}.
\end{problem}

\section{Dirac operators and their spectral measures}

\subsection{Dirac operator basics}

Let $R:\cI\to \mathbb R^{2\times 2}$ be a function  taking values in nonnegative definite matrices. Here $\cI$ is an interval with $\overline{\cI}=[0,\TT]$, that may be open at  one of its endpoints. Most often we will deal with the case $[0,1)$ or $(0,1]$. 

In this paper, a {\bf Dirac operator} is defined as   
\begin{equation}\label{tau}
{\boldsymbol{\tau}}u=R^{-1} J u', \qquad J=\mat{0}{-1}{1}{0},
\end{equation}
acting on some subset of functions of the form $u:\cI\to \R^2$.

\begin{assumption}

$R(t)$ is positive definite for all $t\in \cI$, $\|R\|, \|R^{-1}\|$ are locally bounded on $\cI$. Moreover, $\det R(t)=1/4$ for all $t\in \cI$.
\end{assumption}
If $R$ satisfies Assumption 1 then it can be parametrized as
\begin{align}
R=\frac{X^t X}{2\det X}  , \qquad X=\mat{1}{-x}{0}{y}, \qquad y>0, \,x\in \R.\label{eq:Rxy}
\end{align}
The assumption $\det R=1/4$ can be replaced by the more general condition $\int_{\cI} \det R(s)\, ds<\infty$. This setting is equivalent to ours up to a time change.

\begin{assumption}\ \\
\vspace{-2em}
\begin{itemize}
    \item If $\cI$ is open on the left then there is a nonzero $\mathfrak  u_0  \in \R^2$ so that  we have
\begin{align}
\int_{\cI} \left\|\mathfrak  u_0  ^t R \right\|ds<\infty,\qquad \int\limits_{\substack{s<t\\ (s,t)\in \cI}} \mathfrak  u_0  ^t R(s) \mathfrak  u_0   \, (J \mathfrak  u_0  )^t  R(t) J \mathfrak  u_0  ds dt<\infty.\label{eq:assmpns_1}
\end{align}
    
    \item If $\cI$ is open on the right then there is a nonzero  $\mathfrak  u_1  \in \R^2$ so that  we have
\begin{align}
\int_{\cI} \left\|\mathfrak  u_1  ^t R \right\|ds<\infty,\qquad \int\limits_{\substack{s>t\\ (s,t)\in \cI}} \mathfrak  u_1  ^t R(s) \mathfrak  u_1   \, (J \mathfrak  u_1 )^t  R(t) J \mathfrak  u_1   ds dt<\infty.\label{eq:assmpns_2}
\end{align}
\end{itemize}
\end{assumption}
Note that if $\cI$ is closed and $R$ satisfies Assumption 1 then \eqref{eq:assmpns_1} and \eqref{eq:assmpns_2} hold for any $\uu_0, \uu_1\in \R^2$.
Moreover, if $\bar I=[0,\TT]$, $0<t<\TT$ and $R$ satisfies Assumptions 1-2 with an appropriate $\uu_0$ or $\uu_1$ (if applicable), then it also satisfies the assumptions on $\cI\cap [0,t]$.

The Dirac operator $\btau$ with boundary conditions $\uu_0, \uu_1$ on $\cI$  is self-adjoint on an appropriate domain. Let $
L^2_R=L^2_R(\cI)$
denote the $L^2$ sub-space of $\cI\to \R^2$ functions with the squared norm 
\begin{align}
\|f\|_R^2=\int_{\cI} f^t(s) R(s) f(s) ds.
\end{align}
Let $\ac$ denote the subset of $\cI\to \R^2$ functions that are absolutely continuous. The following proposition follows from standard theory of Dirac operators (see e.g.~\cite{Weidmann}, \cite{BVBV_sbo}).

\begin{proposition}\label{prop:inverse_tau}
Let $\btau$ be a Dirac operator satisfying Assumption 1 on $\cI$. Let $\uu_0, \uu_1$ be non-zero vectors in $\R^2$ with which  the appropriate case of Assumption 2 is satisfied. Then $\btau$ is self-adjoint on the following domain:
\begin{align}
\dom_\btau=\{v\in L^2_R \cap  \ac\,:  \, \btau v\in L^2_R, \,\lim_{s\downarrow 0} v(s)^t\,J\,\uu_0   = 0, \,\,\lim_{s\uparrow \TT}v(s)^t\,J\,\uu_1 =0\}.
\end{align}
\end{proposition}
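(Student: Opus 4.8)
The plan is to place the statement within the standard self-adjointness theory for Dirac operators, following \cite{Weidmann} and \cite{BVBV_sbo}. Let $\btau_{\max}$ be $\btau$ acting on the maximal domain $\dom_{\max}=\{v\in L^2_R\cap\ac:\btau v\in L^2_R\}$, and let $\btau_{\min}$ be the closure of $\btau$ restricted to functions of compact support in the interior of $\cI$. As usual $\btau_{\min}^*=\btau_{\max}$ and $\btau_{\max}^*=\btau_{\min}$, so the self-adjoint extensions of $\btau_{\min}$ are exactly the restrictions of $\btau_{\max}$ to domains on which the boundary form is Lagrangian. The first step is Green's formula: for $u,v\in\dom_{\max}$, a direct computation using $u'=-JR\,\btau u$ gives $\frac{d}{dt}\big(u^tJv\big)=u^tR\,(\btau v)-(\btau u)^tR\,v$, and since $R$ is positive definite with $\det R\equiv 1/4$ the pointwise inequality $|w_1^tRw_2|\le(w_1^tRw_1)^{1/2}(w_2^tRw_2)^{1/2}$ together with $u,v,\btau u,\btau v\in L^2_R$ shows this derivative lies in $L^1(\cI)$; hence the one-sided limits below exist and \[\langle\btau u,v\rangle_R-\langle u,\btau v\rangle_R=\lim_{s\downarrow 0}u(s)^tJv(s)-\lim_{s\uparrow\TT}u(s)^tJv(s).\] Only Assumption 1 is used here.

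Next I analyze the endpoints. At a closed endpoint $a$ the value $u(a)=\lim_{s\to a}u(s)$ exists for $u\in\dom_{\max}$, the solutions of $\btau u=0$ near $a$ are the constant vectors (both in $L^2_R$), so $a$ is a regular endpoint, the boundary form $u,v\mapsto u(a)^tJv(a)$ is a nondegenerate alternating form on the two-dimensional space of boundary values, and for any nonzero $\uu$ the condition $\lim_{s\to a}v(s)^tJ\uu=0$ is the general form of a separated self-adjoint boundary condition. At an open endpoint, say $0$, Assumption 2 enters through \eqref{eq:assmpns_1}: the bound $\int_\cI\|\uu_0^tR\|\,ds<\infty$ gives $\uu_0\in L^2_R$ near $0$, so the constant function $\uu_0$ is a square-integrable solution of $\btau u=0$ near $0$ and extends to an element of $\dom_{\max}$. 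By the Weyl alternative the endpoint $0$ is then either limit point --- in which case the $J\uu_0$ direction fails to be square-integrable, this being what the iterated integral in \eqref{eq:assmpns_1} records, the boundary form vanishes identically on $\dom_{\max}$ at $0$, and $\lim_{s\downarrow0}v^tJ\uu_0=0$ holds automatically --- or limit circle, in which case the boundary form at $0$ is again nondegenerate, the solutions of $\btau u=0$ near $0$ are constant, and $\lim_{s\downarrow0}v^tJ\uu_0=0$ is once more a valid separated self-adjoint condition (and \eqref{eq:assmpns_1} then holds for every nonzero $\uu_0$). The same applies to the right endpoint with $\uu_1$ and \eqref{eq:assmpns_2}.

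With these in hand the proposition follows by the standard dimension count. Setting $n_a=1$ if $a\in\{0,\TT\}$ is regular or limit circle and $n_a=0$ if it is limit point, $\btau_{\min}$ has deficiency indices $(n_0+n_\TT,\,n_0+n_\TT)$ and a restriction of $\btau_{\max}$ defined by one separated condition at each non-limit-point endpoint is self-adjoint. The domain $\dom_\btau$ of the statement imposes $\lim_{s\downarrow0}v^tJ\uu_0=0$ and $\lim_{s\uparrow\TT}v^tJ\uu_1=0$; by the previous paragraph each of these is either a genuine separated self-adjoint boundary condition or automatically satisfied, so $\dom_\btau$ is exactly such a self-adjoint restriction. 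Concretely, $\btau|_{\dom_\btau}$ is symmetric because at a regular or limit-circle endpoint $a$ the condition $v(a)^tJ\uu_a=0$ forces the (generalized) boundary value into the line $\R\uu_a$, on which $u^tJv$ vanishes since $\uu_a^tJ\uu_a=0$; and it is maximal since any $w$ in the domain of its adjoint lies in $\dom_{\max}$ and, testing the identity $\langle\btau u,w\rangle_R=\langle u,\btau w\rangle_R$ against $u\in\dom_\btau$ supported near a single endpoint, must satisfy both boundary conditions, hence $w\in\dom_\btau$. The one genuinely nonroutine point is the analysis at the open endpoint --- showing the boundary form extends continuously there and extracting the limit-point/limit-circle dichotomy from the integral conditions \eqref{eq:assmpns_1}--\eqref{eq:assmpns_2}; the remainder is the general extension theory of \cite{Weidmann}.
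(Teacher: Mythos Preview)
The paper does not give a proof of this proposition; it simply states that it ``follows from standard theory of Dirac operators (see e.g.~\cite{Weidmann}, \cite{BVBV_sbo}).'' Your sketch is a correct expansion of precisely that standard argument: Green's identity, the Weyl limit-point/limit-circle alternative at the open endpoint, and the usual deficiency-index count for separated boundary conditions. One small imprecision: your parenthetical that the iterated integral in \eqref{eq:assmpns_1} ``records'' whether $J\uu_0$ fails to be square-integrable is not quite right --- that integral can be finite even when $(J\uu_0)^tR\,J\uu_0$ is not integrable near $0$, and its real role in the paper is to control the Hilbert--Schmidt norm of $\btau^{-1}$ (see the proposition immediately following). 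For self-adjointness you only use the first integral in \eqref{eq:assmpns_1}, which guarantees the constant $\uu_0$ lies in $L^2_R$ and hence in $\dom_{\max}$; your dichotomy argument then goes through regardless of whether the endpoint is limit point or limit circle, so the slip is harmless.
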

We denote the self-adjoint operator $\btau$ with boundary conditions
$\uu_0, \uu_1$ by $\Dirop(R_{\cdot},\uu_0  ,\uu_1 )$ or $\Dirop(x_{\cdot}+i y_{\cdot},\uu_0  ,\uu_1 )$. 


When $\uu_0\not \,\parallel \uu_1$ then we also make the following assumption, which is just a normalization. 

\begin{assumption}
$\uu_0^t J \uu_1=1$.
\end{assumption}
Note that if $\uu_0, \uu_1$ satisfy this assumption then we have $u_1=\frac{1}{\|u_0\|^2} J^t \uu_0+c \uu_0$ with $c\in \R$.

If $\uu_0\not\,\parallel \uu_1$ then $\btau=\Dirop(R_{\cdot},\uu_0  ,\uu_1 )$ has an inverse that is a Hilbert-Schmidt integral operator (see \cite{Weidmann} or \cite{BVBV_sbo}).
\begin{proposition}
Suppose that  $\btau=\Dirop(R_{\cdot},\uu_0  ,\uu_1 )$  with Assumptions 1-3 are satisfied. Then $\btau^{-1}$ is a Hilbert-Schmidt integral operator on $L^2_R$ given by
\begin{align}
    \btau^{-1} f(s)=\int_{\cI} K_{\btau^{-1}}(s,t) R(t) f(t) dt, \qquad K_{\btau^{-1}}(s,t)=\mathfrak  u_0   \mathfrak  u_1 ^t \ind(s<t)+\mathfrak  u_1  \mathfrak  u_0  ^t \ind(s\ge t),
\end{align}
and we have
\begin{align}
  \|\btau^{-1}\|_{\textup{HS}}^2=2\iint\limits_{\substack{s<t\\ (s,t)\in \cI}}
  \mathfrak  u_0  ^t R(s) \mathfrak  u_0   \, \uu_1^t R(t) \uu_1 ds dt.
\end{align}
\end{proposition}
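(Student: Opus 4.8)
The plan is to check directly that the integral operator $T$ defined by $Tf(s)=\int_{\cI}K_{\btau^{-1}}(s,t)R(t)f(t)\,dt$ inverts $\btau$, and then to evaluate its Hilbert--Schmidt norm by exploiting the rank-one structure of the kernel. The algebraic fact that drives everything is that, since $J$ is antisymmetric, $ba^t-ab^t=-(a^tJb)\,J$ for all $a,b\in\R^2$; taking $a=\uu_0$, $b=\uu_1$ and using Assumption 3 gives $\uu_1\uu_0^t-\uu_0\uu_1^t=-J=J^t$, and also $JJ^t=I$.

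First I would show $T$ maps $L^2_R$ into $\dom_\btau$ with $\btau T=\mathrm{id}$. Writing $v_0(s)=\int_{(0,s]\cap\cI}Rf$ and $v_1(s)=\int_{(s,\TT)\cap\cI}Rf$, we have $Tf(s)=\uu_1(\uu_0^tv_0(s))+\uu_0(\uu_1^tv_1(s))$; for each interior $s$ both integrals converge by Cauchy--Schwarz in $L^2_R$ (using that $\|R\|$ is locally bounded, Assumption 1), so $Tf$ is well defined and absolutely continuous, and $(Tf)'(s)=(\uu_1\uu_0^t-\uu_0\uu_1^t)R(s)f(s)=J^tR(s)f(s)$, whence $\btau(Tf)=R^{-1}J\,J^tRf=f$. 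For the boundary conditions the scalars $\uu_0^tJ\uu_0=\uu_1^tJ\uu_1=0$, $\uu_1^tJ\uu_0=-1$, $\uu_0^tJ\uu_1=1$ collapse the relevant quantities to $Tf(s)^tJ\uu_0=-\int_{(0,s]}f^tR\uu_0$ and $Tf(s)^tJ\uu_1=\int_{(s,\TT)}f^tR\uu_1$; the first tends to $0$ as $s\downarrow0$ because $f^tR\uu_0\in L^1(\cI)$, which is where the bound $\int_\cI\|\uu_0^tR\|<\infty$ of Assumption 2 enters, and symmetrically the second tends to $0$ as $s\uparrow\TT$. The remaining point is $Tf\in L^2_R$: near the interior this is immediate, and near an open endpoint it is exactly where the second integrability condition of Assumption 2 is used --- one decomposes $\uu_1$ along $J\uu_0$ and $\uu_0$ via Assumption 3, bounds $\uu_1^tR\uu_1\le C\big((J\uu_0)^tRJ\uu_0+\uu_0^tR\uu_0\big)$, and estimates the resulting integrals of $(\uu_0^tv_0)^2\,\uu_1^tR\uu_1$ and $(\uu_1^tv_1)^2\,\uu_0^tR\uu_0$ by $\iint_{s<t}(\uu_0^tR(s)\uu_0)\big((J\uu_0)^tR(t)J\uu_0\big)<\infty$ together with $\iint_{s<t}(\uu_0^tR(s)\uu_0)(\uu_0^tR(t)\uu_0)\le\tfrac12\|\uu_0\|_R^4$. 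Finally $\btau$ is injective, since $\btau w=0$ forces $w$ constant and the two boundary conditions with $\uu_0,\uu_1$ not proportional force $w=0$; combined with the surjectivity just shown, this yields $T=\btau^{-1}$, which is in particular bounded.

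For the Hilbert--Schmidt norm I would pass to the standard model: multiplication by $R^{1/2}$ is a unitary isomorphism $L^2_R\to L^2(\cI,\R^2,ds)$ under which $\btau^{-1}$ becomes the integral operator with matrix kernel $\kappa(s,t)=R^{1/2}(s)K_{\btau^{-1}}(s,t)R^{1/2}(t)$, so that $\|\btau^{-1}\|_{\textup{HS}}^2=\iint_{\cI^2}\tr\big(R(s)K_{\btau^{-1}}(s,t)R(t)K_{\btau^{-1}}(s,t)^t\big)\,ds\,dt$. Since $K_{\btau^{-1}}(s,t)$ equals $\uu_0\uu_1^t$ on $\{s<t\}$ and $\uu_1\uu_0^t$ on $\{s\ge t\}$, the rank-one structure collapses the integrand to $(\uu_0^tR(s)\uu_0)(\uu_1^tR(t)\uu_1)$ on $\{s<t\}$ and to $(\uu_0^tR(t)\uu_0)(\uu_1^tR(s)\uu_1)$ on $\{s\ge t\}$; the two pieces are exchanged by $s\leftrightarrow t$, giving $2\iint_{s<t}(\uu_0^tR(s)\uu_0)(\uu_1^tR(t)\uu_1)\,ds\,dt$, which is the claimed value, and its finiteness is precisely the estimate used in the previous paragraph.

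I expect the only genuine work to be the endpoint analysis --- verifying that the boundary terms vanish and that $Tf$ remains in $L^2_R$ up to an open endpoint --- which is exactly what the integrability hypotheses in Assumptions 1--2 are tailored to guarantee; everything else is formal linear algebra and Fubini. All of this is standard Dirac operator theory (see \cite{Weidmann}, \cite{BVBV_sbo}), so in the paper it would suffice to indicate these steps.
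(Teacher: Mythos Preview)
Your argument is correct. The paper does not supply its own proof of this proposition; it simply cites \cite{Weidmann} and \cite{BVBV_sbo}, and your direct verification---checking that the integral operator inverts $\btau$, verifying the domain and boundary conditions via Assumptions 1--3, and reading off the Hilbert--Schmidt norm from the rank-one kernel---is exactly the standard argument found there, as you correctly anticipate in your final sentence.
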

Let $Y=\tfrac{X}{\sqrt{\det X}}$ with $X$ defined in \eqref{eq:Rxy}. Then the conjugated integral operator \begin{align}\label{def:res}
\res \btau:=Y \btau^{-1} Y^{-1}\end{align}
is self-adjoint on $L^2$ with integral kernel
\begin{align}\label{resint}
K_{\res \btau}(s,t)=\tfrac{1}{2}\,a_0(s) a_1(t)^t \ind(s<t)\;+\;\tfrac{1}{2}\,a_1(s) a_0(t)^t \ind(s\ge t),   
\end{align}
where 
\begin{equation}\label{eq:ac}
a_0=\frac{X\mathfrak  u_0  }{\sqrt{\det X}},\qquad a_1=\frac{X\mathfrak  u_1 }{\sqrt{\det X}}.
\end{equation}
We have  $\spec(\res \btau)=\spec(\btau^{-1})$, in particular $\|\res \tau\|^2_{\textup{HS}}=\|\btau^{-1}\|^2_{\textup{HS}}$.

\subsection{Secular function of a Dirac operator}

\cite{BVBV_szeta} introduced the  \emph{secular function} of a Dirac operator as a generalization of the (normalized) characteristic polynomial of a matrix. 
\begin{definition}
Suppose that the Dirac operator $\btau=\Dirop(R_{\cdot},\uu_0  ,\uu_1 )$ satisfies Assumptions 1-3 on $\cI$. We define its integral trace as
\begin{align}
    \ttr_\btau=\int_{\cI} \uu_0 R(s) \uu_1 ds,
\end{align}
and its secular function as
\begin{align}
    \zeta_{\btau}(z)=e^{-z\cdot {\mathfrak t}_\btau}{\det}_2(I-z \,\res \btau).
\end{align}
Here ${\det}_2$ is the (second) regularized determinant, see \cite{SimonTrace}, Chapter 9.
\end{definition}
The secular function of a Dirac operator $\btau$ is an entire function with zero set given by $\spec \btau$, and it is real on $\R$. As the next proposition shows, it can also be represented in terms of a canonical system. (See Proposition 13 in \cite{BVBV_szeta}.)

\begin{proposition}\label{prop:H_ODE}
Suppose that $\cI$ is closed on the right, and  $R$, $\uu_0$ satisfy Assumptions 1 and 2.
There is a  unique vector-valued function  $H: \cI\times \CC\to \CC^2$ so that for every $z\in \CC$  the function $H(\cdot, z)$ is the solution of the ordinary differential equation
\begin{align}
\label{eq:H}
J\frac{d}{dt}H(t,z)&=z R(t) H(t,z), \qquad t\in \cI, \qquad\lim_{t\to 0} H(t,z)=  \mathfrak  u_0  .
\end{align}
For any $t\in \cI$ the vector function $H(t,z)$ satisfies $\|H(t,z)\|>0$, and its two entries are entire functions of $z$ mapping the reals to the reals.

In addition, if $\btau=\Dirop(R_{\cdot},\uu_0  ,\uu_1 )$ with
$\uu_1$ satisfying Assumption 3,  then
\begin{align}\label{eq:zeta}
    \zeta_{\btau}(z)=H(1,z)^t J \uu_1.
\end{align}
\end{proposition}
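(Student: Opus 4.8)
The plan is to treat the three assertions—existence and uniqueness of $H$, its positivity and entireness, and the secular-function identity—in turn, reducing the only genuinely singular point, the possibly open left endpoint of $\cI$, to the regular (closed-interval) case by an exhaustion. \emph{Construction of $H$.} Since $J^{-1}=J^t$, \eqref{eq:H} is the linear system $H'=zJ^tRH$. On every compact subinterval of the interior of $\cI$ the coefficient $zJ^tR$ is bounded (Assumption 1), so the transfer matrix $\Psi(t,s,z)$ solving $\partial_t\Psi=zJ^tR\,\Psi$, $\Psi(s,s,z)=I$, exists and equals its Peano--Baker series $\sum_{k\ge0}z^k\int_{s<r_1<\cdots<r_k<t}J^tR(r_k)\cdots J^tR(r_1)\,dr$; this converges locally uniformly in $z$, so $\Psi(t,s,z)$ has entries entire in $z$ and real for real $z$. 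If $0\in\cI$ one simply sets $H(t,z)=\Psi(t,0,z)\uu_0$. If $\cI$ is open on the left I would instead show that $H(t,z):=\lim_{s\downarrow0}\Psi(t,s,z)\uu_0$ exists, locally uniformly in $z$, by a Picard iteration in the orthogonal frame $\{\uu_0,J\uu_0\}$: writing a solution as $\alpha\uu_0+\beta J\uu_0$, the evolution couples $\alpha$ to $\beta$ and back through $\uu_0^tRJ\uu_0$, which is of size $O(\|\uu_0^tR\|)$ and hence (first clause of Assumption 2) integrable near $0$, while $\alpha$ feeds back into itself through $(J\uu_0)^tRJ\uu_0$, which need not be integrable—but in the iteration the $\beta$-component already carries a factor $\int_0^\cdot\|\uu_0^tR\|$, so the induced correction to $\alpha$ is governed by $\iint_{s<t}\uu_0^tR(s)\uu_0\,(J\uu_0)^tR(t)J\uu_0\,ds\,dt$, finite by the second clause of Assumption 2. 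With the small parameter $\int_0^\delta\|\uu_0^tR\|$ the iteration converges on $(0,\delta]$, with bounds locally uniform in $z$, to a solution of \eqref{eq:H} with $\lim_{t\downarrow0}H(t,z)=\uu_0$; uniqueness is immediate from uniqueness of the linear system once $H$ is fixed at one point, and local uniformity in $z$ makes the entries of $H(t,z)$ entire and real on $\R$. Finally, if $H(t_0,z)=0$ for some $t_0\in\cI$ then zero data forces $H(\cdot,z)\equiv0$, contradicting $\lim_{t\downarrow0}H(t,z)=\uu_0\neq0$; hence $\|H(t,z)\|>0$ everywhere.

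\emph{The secular-function identity.} As a guide, note the eigenfunction dictionary: in $\R^2$ one has $v^tJw=0$ iff $v\parallel w$, so $H(1,z)^tJ\uu_1=0$ is equivalent to $H(1,z)\parallel\uu_1$, i.e.\ (using positivity) to $H(\cdot,z)$ being a nonzero element of $\dom_\btau$ with $\btau H(\cdot,z)=zH(\cdot,z)$; thus $z\mapsto H(1,z)^tJ\uu_1$ and $\zeta_\btau$ have the same zero set $\spec\btau$, and moreover $H(1,0)=\uu_0$, so the value at $z=0$ is $\uu_0^tJ\uu_1=1$ by Assumption 3, matching $\zeta_\btau(0)={\det}_2(I)=1$. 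To upgrade this to the identity itself (rather than equality up to a zero-free entire factor, which could alternatively be pinned down by Hadamard factorization and a growth estimate) I would compute the regularized determinant directly in the regular case. For $\cI$ closed the kernel $K_{\res \btau}$ of \eqref{resint} is semiseparable, so by the Gohberg--Krein theory of such kernels—equivalently, by integrating $\partial_z\log{\det}_2(I-z\,\res \btau)=-z\,\Tr\!\big((\res \btau)^2(I-z\,\res \btau)^{-1}\big)$, with the resolvent kernel of $(\btau-z)^{-1}$ built by variation of parameters from the left solution $H(\cdot,z)$ of $Ju'=zRu$ and the analogous right solution, Wronskian-normalized—one obtains ${\det}_2(I-z\,\res \btau)=e^{z\ttr_\btau}\,H(1,z)^tJ\uu_1$, that is $\zeta_\btau(z)=H(1,z)^tJ\uu_1$; the exponential prefactor is exactly the trace correction absorbed into ${\det}_2$, and the value at $z=0$ fixes the normalization.

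\emph{The singular case by exhaustion.} For $\cI$ open on the left, let $\btau^{(n)}=\Dirop(R_{\cdot},\uu_0,\uu_1)$ on $[1/n,1]$; it satisfies Assumptions 1--3, is regular, and has the same boundary vectors, so the integral-kernel formula for $(\btau^{(n)})^{-1}$ gives $\res \btau^{(n)}=P_n(\res \btau)P_n$ with $P_n$ the restriction to $L^2([1/n,1])$. Since $\res \btau$ is Hilbert--Schmidt and $P_n\uparrow I$ strongly, $\res \btau^{(n)}\to\res \btau$ in Hilbert--Schmidt norm, and $\ttr_{\btau^{(n)}}\to\ttr_\btau$ because $\uu_0^tR\uu_1$ is absolutely integrable by Assumption 2; hence $\zeta_{\btau^{(n)}}\to\zeta_\btau$ locally uniformly by continuity of ${\det}_2$. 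On the other hand the solution $H^{(n)}$ attached to $\btau^{(n)}$ satisfies $H^{(n)}(1,z)=\Psi(1,1/n,z)\uu_0\to H(1,z)$ locally uniformly in $z$ by the construction of $H$. Passing to the limit in the regular-case identity $\zeta_{\btau^{(n)}}(z)=H^{(n)}(1,z)^tJ\uu_1$ yields $\zeta_\btau(z)=H(1,z)^tJ\uu_1$, which is \eqref{eq:zeta}.

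\emph{Main obstacle.} The delicate step is the construction of $H$ at an open left endpoint with control that is locally uniform in $z$: this is exactly what the two clauses of Assumption 2 are engineered to supply—the first making the cross-terms between the $\uu_0$- and $J\uu_0$-components integrable, the second (the double integral, genuinely needed because $(J\uu_0)^tRJ\uu_0$ need not be integrable) taming the self-feedback of the $\uu_0$-component—and retaining uniformity in $z$ is what then secures both the entireness of $H$ and the continuity $\zeta_{\btau^{(n)}}\to\zeta_\btau$ used in the exhaustion. The regular-case determinant identity is classical in spirit but still requires care to match the boundary vectors $\uu_0,\uu_1$ to the endpoint terms produced by the semiseparable-kernel computation.
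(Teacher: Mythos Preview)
The paper does not actually prove Proposition~\ref{prop:H_ODE}: it is stated with the attribution ``See Proposition 13 in \cite{BVBV_szeta}'' and no argument is given. So there is no paper proof to compare your proposal against; you are supplying a proof where the paper simply cites one.

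Your outline is sound and follows the standard route one would expect (and, in broad strokes, the route in \cite{BVBV_szeta}): build the transfer matrix on compact subintervals via the Dyson/Peano--Baker series, push it to the open endpoint using the two integrability clauses of Assumption~2, deduce entireness from local-uniform convergence in $z$, and obtain \eqref{eq:zeta} in the regular case from the semiseparable structure of $K_{\res\btau}$ and then pass to the singular case by exhaustion using Hilbert--Schmidt continuity of ${\det}_2$. The exhaustion step is clean precisely because the kernel formula \eqref{resint} shows $\res\btau^{(n)}=P_n(\res\btau)P_n$, so HS convergence is automatic.

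One small slip in your Picard description: with $H=\alpha\uu_0+\beta J\uu_0$ and $R$ symmetric, the non-integrable quantity $(J\uu_0)^tR(J\uu_0)$ appears as the coefficient of $\beta$ in $\alpha'$, not as $\alpha\to\alpha$ self-feedback (the $\alpha$ self-coupling is $(J\uu_0)^tR\uu_0=\uu_0^tRJ\uu_0$, which \emph{is} integrable). Your subsequent sentence---that $\beta$ already carries a factor $\int_0^\cdot\|\uu_0^tR\|$, so the $\beta\to\alpha$ contribution is controlled by the double integral in Assumption~2---is the correct mechanism; just relabel which coupling is the dangerous one. This does not affect the argument.
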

The proposition immediately implies that under the listed conditions for any $0<t<\TT$ we have
\[
 \zeta_{\btau,t}(z)=H(t,z)^t J \uu_1,
\]
where $\zeta_{\btau,t}$ is the secular function of $\btau$ defined on $\cI\cap [0,t]$ with boundary conditions $\uu_0, \uu_1$.

As the next proposition shows, the function $H(t, \cdot)$ is continuous on compacts in $t$ at $t=0$, and it depends  continuously on $R$ as well. We use the notation $\ttr_{\btau,t}$ and $\res \btau_t$ for the integral trace and resolvent of $\btau$ restricted to $\cI \cap [0,t]$.

\begin{proposition}\label{prop:H_cont}
Suppose that $\cI$ is closed on the right, and  $R$, $\uu_0$ satisfy Assumptions 1 and 2, and let $H$ be the solution of \eqref{eq:H}. Suppose that  $\uu_1$ satisfies Assumption 3, and let $\btau$ be the Dirac operator defined by $R, \uu_0, \uu_1$. Then there is an absolute constant $c>1$ (depending only on $\uu_0$) so that for all $t\in \cI$, $z\in \CC$ we have
\begin{align} \label{eq:H_cont}
    |H(t,z)-\uu_0|\le \left(c^{|z|(|\ttr_{\btau,t}|+\|\res \btau_t\|+\int_0^t |a_0(s)|^2 ds)}-1\right) c^{\left(|z|(|\ttr_{\btau,t}|+\|\res \btau_t\|+\int_0^t |a_0(s)|^2 ds)+1\right)^2}.
\end{align}
Suppose that $\tilde R$, $\uu_0, \tilde \uu_1$ also satisfy  Assumptions 1-3 on the same $\cI$, let $\widetilde H$ be the solution of \eqref{eq:H}, and $\tilde \btau$ the corresponding Dirac operator. 
Then there is an absolute constant $\uu_0$ (depending only on $\uu_0$) so that for all $t\in \cI$, $z\in \CC$ we have
\begin{align}\notag
&    |H(t,z)-\tilde H(t,z)|\le \left(c^{|z|\left(|\ttr_{\btau,t}-\tilde \ttr_{\btau,t}|+\|\res \btau_t-\res \tilde\btau_t\|+\sqrt{\int_{0}^t|a_0(s)-\tilde a_0(s)|^2 ds \int_{0}^t(|a_0(s)|^2+|\tilde a_0(s)|^2)ds
}\right)}-1\right)\\&\hskip150pt \times
c^{\left(|z|(|\ttr_{\btau,t}|+|\ttr_{\tilde\btau,t}|+\|\res \btau_t\|+\|\res \tilde\btau_t\|+\int_0^t \left(|a_0(s)|^2+|\tilde a_0(s)|^2 \right)ds)+1\right)^2}.\label{eq:H1_cont}
\end{align}
\end{proposition}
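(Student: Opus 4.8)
The plan is to build $H(\cdot,z)$ as the sum of the Picard iterates for the Volterra form of \eqref{eq:H}, estimating these iterates in the basis $\{\uu_0,\uu_1\}$ rather than in the ambient Euclidean norm; a crude Gronwall estimate is useless here because $\|R\|$ need not be integrable near an open endpoint, whereas $\|\uu_0^{t}R\|$ is, by Assumption~2. Using $J^{-1}=J^{t}=-J$, equation \eqref{eq:H} integrates to $H(t,z)=\uu_0-z\int_0^{t}JR(s)H(s,z)\,ds$, so that $H=\sum_{n\ge0}\Delta_n$ with $\Delta_0\equiv\uu_0$ and $\Delta_{n+1}(t)=-z\int_0^{t}JR(s)\Delta_n(s)\,ds$. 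The mechanism is the identity $v=(v^{t}J\uu_1)\uu_0-(v^{t}J\uu_0)\uu_1$, valid for every $v\in\R^{2}$ since $\uu_0^{t}J\uu_1=1$, which gives $JR\uu_0=q\,\uu_0-p\,\uu_1$ and $JR\uu_1=r\,\uu_0-q\,\uu_1$, where $p=\uu_0^{t}R\uu_0=\tfrac12|a_0|^{2}\ge0$, $q=\uu_0^{t}R\uu_1$ and $r=\uu_1^{t}R\uu_1=\tfrac12|a_1|^{2}\ge0$ (so $|q|\le\sqrt{pr}$). Writing $\Delta_n=A_n\uu_0+B_n\uu_1$ the recursion becomes the scalar system
\[
A_{n+1}(t)=-z\int_0^{t}\bigl(A_n(s)q(s)+B_n(s)r(s)\bigr)\,ds,\qquad B_{n+1}(t)=z\int_0^{t}\bigl(A_n(s)p(s)+B_n(s)q(s)\bigr)\,ds,
\]
with $A_0\equiv1$, $B_0\equiv0$; in particular $A_1(t)=-z\,\ttr_{\btau,t}$ and $B_1(t)=z\int_0^{t}p=\tfrac z2\int_0^{t}|a_0|^{2}$.

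Next I would collect the finiteness facts that drive the iteration. The first half of Assumption~2 gives $\int_0^{t}p=\tfrac12\int_0^{t}|a_0|^{2}<\infty$, $\int_0^{t}q=\ttr_{\btau,t}$, and $\int_0^{t}|q|<\infty$ (since $|q|\le|\uu_1|\,\|\uu_0^{t}R\|$). Substituting $J\uu_0\in\operatorname{span}\{\uu_0,\uu_1\}$ (Assumption~3) into the second half of Assumption~2 shows $\iint_{0<s<u<t}p(s)\,r(u)\,ds\,du<\infty$, and this double integral equals $\tfrac12\|\res\btau_t\|^{2}$ by the Hilbert–Schmidt formula for $\btau^{-1}$ restricted to $[0,t]$. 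An induction on $n$ then bounds $|A_n(t)|$ and $|B_n(t)|$ by $|z|^{n}$ times sums of iterated integrals of the weights $p,|q|,r$ over the simplex $0<s_n<\dots<s_1<t$, with the crucial structural point that every occurrence of the weight $r$ is dominated by an earlier partial integral of $p$ produced by the $B$-recursion, so that each such block is at most $\iint_{s<u}p\,r=\tfrac12\|\res\btau_t\|^{2}$ while the remaining factors contribute $\int_0^{t}p$ and $\int_0^{t}|q|$, each bounded by a constant multiple of $M:=|\ttr_{\btau,t}|+\|\res\btau_t\|+\int_0^{t}|a_0|^{2}$ (here $|q|\le\sqrt{pr}$ and Cauchy–Schwarz are used). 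Summing over $n$, the linear factors combine into $c^{|z|M}-1$, with the $-1$ coming from the omission of $\Delta_0=\uu_0$, and the quadratic ones into $c^{(|z|M+1)^{2}}$; this proves \eqref{eq:H_cont}. Running the same computation over $[0,s]$ for $s\le t$, and using that the three quantities are monotone in $t$, also bounds $\zeta_{\btau,s}(z)=H(s,z)^{t}J\uu_1$ uniformly on $s\le t$, while uniform convergence of the series identifies $\sum_n\Delta_n$ with the function $H$ of Proposition~\ref{prop:H_ODE} and recovers its regularity properties.

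For the continuity estimate \eqref{eq:H1_cont} I would run the same machinery on $D:=H-\widetilde H$: subtracting the two instances of \eqref{eq:H} shows $JD'(t)=zR(t)D(t)+z\bigl(R(t)-\widetilde R(t)\bigr)\widetilde H(t,z)$ with $D(0)=0$, i.e.\ $D(t)=-z\int_0^{t}JR(s)D(s)\,ds-z\int_0^{t}J\bigl(R(s)-\widetilde R(s)\bigr)\widetilde H(s,z)\,ds$. The homogeneous term is controlled exactly as $H$ itself was; for the inhomogeneous term one expands $\widetilde H$ in the basis $\{\uu_0,\uu_1\}$ (bounded by the estimate just proved, applied to $\widetilde\btau$) and reduces the scalars $\int_0^{t}\uu_i^{t}\bigl(R-\widetilde R\bigr)\uu_j\,ds$ to $\ttr_{\btau,t}-\widetilde\ttr_{\btau,t}$ when $ij=01$, to $\res\btau_t-\res\widetilde\btau_t$ through the corresponding double integral when $ij=11$, and — via $\uu_0^{t}\bigl(R-\widetilde R\bigr)\uu_0=\tfrac12(a_0-\widetilde a_0)\cdot(a_0+\widetilde a_0)$ and Cauchy–Schwarz — to $\bigl(\int_0^{t}|a_0-\widetilde a_0|^{2}\int_0^{t}(|a_0|^{2}+|\widetilde a_0|^{2})\bigr)^{1/2}$ when $ij=00$. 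Feeding these into the series estimate yields \eqref{eq:H1_cont}. I expect the main obstacle to be precisely the bookkeeping in the induction of the second paragraph: one must verify that the "bad" direction $\uu_1$, along which $R$ may blow up as $t$ approaches the open endpoint, never produces an unpaired $\int r$ but is always absorbed by a preceding $\int_0^{s}p$, so that the series converges with the claimed exponential-of-quadratic dependence on $\|\res\btau_t\|$; granting that, both \eqref{eq:H_cont} and \eqref{eq:H1_cont} follow from the same argument.
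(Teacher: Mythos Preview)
Your route is genuinely different from the paper's, and the structural observation---that in the $\{\uu_0,\uu_1\}$ Picard recursion every occurrence of the ``bad'' weight $r$ is preceded by a $p$, so no unpaired $\int r$ ever appears---is correct and is the heart of why the series converges at all. The paper, however, does not iterate. It uses Proposition~\ref{prop:H_ODE} to identify $H(t,z)^tJ\uu_1$ with the secular function $\zeta_{\btau,t}(z)=e^{-z\,\ttr_{\btau,t}}\det_2(I-z\,\res\btau_t)$, invokes Simon's off-the-shelf continuity bound $|\det_2(I-z\kappa_1)-\det_2(I-z\kappa_2)|\le |z|\,\|\kappa_1-\kappa_2\|_2\,c^{|z|^2(\|\kappa_1\|_2^2+\|\kappa_2\|_2^2)+1}$, and then---this is the trick you are missing---repeats the same estimate with the right boundary condition $\uu_1$ replaced by $\uu_0+\uu_1$. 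That second application bounds $H(t,z)^tJ(\uu_0+\uu_1)$; subtracting the two gives $H(t,z)^tJ\uu_0$, hence both coordinates of $H$. Passing from $\uu_1$ to $\uu_0+\uu_1$ shifts $\ttr$ by $\tfrac12\int_0^t|a_0|^2$ and shifts the kernel of $\res\btau$ by $a_0(s)a_0(t)^t$, which is exactly where the $\int|a_0|^2$ and $\int|a_0-\tilde a_0|^2$ terms in \eqref{eq:H_cont}--\eqref{eq:H1_cont} come from.

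There is a real gap in your sketch, not just missing bookkeeping. You bound $|A_n|$ and $|B_n|$ termwise, so every $q$-factor enters as $\int_0^t|q|$ (or, after integration by parts, as $\sup_{s\le t}|\ttr_{\btau,s}|$), whereas the stated bound involves only $|\ttr_{\btau,t}|=|\int_0^t q|$. Your remark that ``$|q|\le\sqrt{pr}$ and Cauchy--Schwarz'' controls $\int_0^t|q|$ by $M$ does not go through: Cauchy--Schwarz gives $(\int|q|)^2\le\int p\cdot\int r$, and $\int r$ need not be finite on an interval open at the left; weighting by $P(u)=\int_0^u p$ to bring in $\|\res\btau_t\|^2=2\int rP$ leaves a divergent $\int p/P$. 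Concretely, with $\int p$ fixed one can make $\int|q|$ arbitrarily large while $\int q$ and $\int_0^t q(u)^2P(u)/p(u)\,du$ (hence $\|\res\btau_t\|$) stay bounded, by letting $q/p$ oscillate like $\sigma^{-1/2-\epsilon}$ in the variable $\sigma=P$. The $\det_2$ formulation avoids this because the $q$-contributions are packaged into the single scalar $e^{-z\ttr}$ before any absolute value is taken; your termwise bound throws that cancellation away. The same issue recurs in your treatment of \eqref{eq:H1_cont}: you would end up needing $\int_0^t|q-\tilde q|$ rather than $|\ttr_{\btau,t}-\tilde\ttr_{\btau,t}|$, which is strictly stronger than what Lemma~\ref{lem:spec_convergence} later assumes. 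If you want to salvage the direct approach, the cleanest fix is precisely the paper's: recognise $\sum_n A_n$ as a secular function and invoke (or reprove) the $\det_2$ bound, then get the $B$-component by changing $\uu_1$ to $\uu_0+\uu_1$ rather than by bounding $\sum_n|B_n|$.
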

\begin{proof}
Theorem 9.2(c) of \cite{SimonTrace} shows that  if $\kappa_1, \kappa_2$ are Hilbert-Schmidt  operators on the same domain then
\begin{align}\label{det2_tri}
|{\det}_2(I-z \kappa_1)-{\det}_2(I-z \kappa_2)|\le |z|\cdot  \|\kappa_1-\kappa_2\|_2 c^{ |z|^2 (\|\kappa_1\|_2^2+ \|\kappa_2\|_2^2)+1}
\end{align}
with an absolute constant $c$. Using \eqref{eq:zeta} one readily obtains a bound of the forms \eqref{eq:H_cont}, \eqref{eq:H1_cont} for $|H(t,z)^tJ\uu_1-1|$ and $|H(t,z)^tJ\uu_1-\tilde H(t,z)^tJ\tilde \uu_1|$, without the $a_0, \tilde a_0$ integral terms. 

Note that Assumptions 1-3 also hold for $R, \uu_0, \uu_1+\uu_0$ (and similarly for $\tilde R, \uu_0, \uu_1+\uu_0$). These yield bounds for $|H(t,z)^tJ(\uu_0+\uu_1)-1|$ and $|H(t,z)^tJ(\uu_0+\uu_1)-\tilde H(t,z)^tJ(\uu_0+\tilde \uu_1)|$, but with modified integral trace and $\res \btau$. By changing the right boundary condition by $\uu_0$ we change the integral trace on $\cI\cap [0,t]$ by $\int_0^t |a_0(s)|^2 ds$, and the integral kernel of the conjugated integral operator \eqref{resint} by the function $K_0(s,t)=a_0(s) a_0(t)^t$. We have 
\[
\|K_0\|_{\textup{HS}}^2=\iint_{\cI\times \cI}\|a_0(s) a_0(t)^t\|^2ds dt=\left(\int_\cI |a_0(s)|^2 ds\right)^2
\]
and
\begin{align*}
\|K_0-\tilde K_0\|^2_{\textup{HS}}&=\iint_{\cI\times \cI} \|a_0(s) a_0(t)^t-\tilde a_0(s) \tilde a_0(t)^t\|^2ds\\&\le 2\int_{\cI}|a_0(s)-\tilde a_0(s)|^2\int_{\cI}(|a_0(s)|^2+|\tilde a_0(s)|^2) ds.
\end{align*}
This implies the bounds  \eqref{eq:H_cont}, \eqref{eq:H_cont} for the terms 
\begin{align*}
    |H(t,z)^tJ(\uu_0+\uu_1)-1|, \qquad |H(t,z)^tJ(\uu_0+\uu_1)-\tilde H(t,z)^tJ(\uu_0+\tilde \uu_1)|,
\end{align*}
(now with the $a_0, \tilde a_0$ terms included), from which the proposition follows.
\end{proof}

The ODE system \eqref{eq:H} also provides a representation of the eigenfunctions of $\btau$.

\begin{proposition}\label{prop:H_L2}
Suppose that $\cI$ is closed from the right, and  $R$, $\uu_0$ satisfy Assumptions 1 and 2.
The function $H(\cdot,\lambda)$ given in \eqref{eq:H} is in $L^2_R$ for any fixed $\lambda\in \R$. In particular, if $\btau=\Dirop(R_{\cdot}, \uu_0, \uu_1)$ for any nonzero $\uu_1\in \R^2$ then the eigenvalues of $\btau$ are given by the zero set of  
$H(1,z)^tJ\uu_1$. Moreover, if $\lambda$ is an eigenvalue then  $H(\cdot,\lambda)$ is the corresponding eigenfunction.
\end{proposition}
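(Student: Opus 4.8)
The plan is to show first that $H(\cdot,\lambda)\in L^2_R$ for every real $\lambda$, and then to read off the eigenvalue and eigenfunction statements as essentially formal consequences. For the $L^2_R$ membership, I would use the conjugation by $Y$ already introduced before \eqref{def:res}: writing $h(t)=Y(t)H(t,\lambda)$, the ODE \eqref{eq:H} becomes a first-order linear system whose coefficient matrix is expressed through the integral kernel in \eqref{resint}, so that $h$ solves a Volterra integral equation with kernel $\tfrac12 a_0(s)a_1(t)^t$ (the $\ind(s<t)$ piece) driven by the initial value $a_0$. Since $\res\btau$ is Hilbert--Schmidt on $L^2$, both $a_0$ and $a_1$ are in $L^2(\cI)$; a Gronwall/Picard iteration estimate on this Volterra equation then bounds $\|h\|_{L^2(\cI)}$ in terms of $\|a_0\|_2$, $\|a_1\|_2$ and $|\lambda|$, and since $\|H(\cdot,\lambda)\|_R=\|h\|_{L^2}$ this gives $H(\cdot,\lambda)\in L^2_R$. (Alternatively, one can invoke the continuity bound \eqref{eq:H_cont} restricted to $[0,t]$ and let $t\uparrow\TT$, but the Volterra estimate is cleaner and self-contained.)

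Given $L^2_R$ membership, I would argue as follows for the eigenvalue claim. Fix a nonzero $\uu_1$ and let $\btau=\Dirop(R_\cdot,\uu_0,\uu_1)$ with domain as in Proposition \ref{prop:inverse_tau}. If $\lambda\in\R$ satisfies $H(1,\lambda)^tJ\uu_1=0$, then $H(\cdot,\lambda)$ lies in $L^2_R\cap\ac$, it satisfies $\btau H(\cdot,\lambda)=\lambda H(\cdot,\lambda)$ pointwise by \eqref{eq:H} (rearranging $J H' = \lambda R H$ into $R^{-1}JH'=\lambda H$), hence $\btau H(\cdot,\lambda)\in L^2_R$; the left boundary condition $\lim_{s\downarrow 0}H(s,\lambda)^tJ\uu_0=0$ holds because $H(s,\lambda)\to\uu_0$ and $\uu_0^tJ\uu_0=0$; and the right boundary condition is exactly the hypothesis $H(1,\lambda)^tJ\uu_1=0$. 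So $H(\cdot,\lambda)\in\dom_\btau$ and $\lambda$ is an eigenvalue with eigenfunction $H(\cdot,\lambda)$. Conversely, if $\lambda$ is an eigenvalue with eigenfunction $\vfi$, then $\vfi$ solves the same ODE $J\vfi'=\lambda R\vfi$; since $\vfi\in\dom_\btau$ forces $\lim_{s\downarrow0}\vfi(s)^tJ\uu_0=0$ and since (by $\|H(t,z)\|>0$ from Proposition \ref{prop:H_ODE}, and a one-dimensionality argument for the solution space with a prescribed boundary behavior at $0$) the solution of $J\vfi'=\lambda R\vfi$ with $\lim_{s\downarrow0}\vfi(s)^tJ\uu_0=0$ is unique up to scalar, $\vfi$ must be a scalar multiple of $H(\cdot,\lambda)$; then the right boundary condition $\vfi(1)^tJ\uu_1=0$ translates into $H(1,\lambda)^tJ\uu_1=0$. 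This gives the two-sided identification of $\spec\btau$ with the zero set of $z\mapsto H(1,z)^tJ\uu_1$, consistent with \eqref{eq:zeta}.

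The step I expect to require the most care is the uniqueness of the solution of $J\vfi'=\lambda R\vfi$ subject to the prescribed asymptotics $\lim_{s\downarrow0}\vfi(s)^tJ\uu_0=0$ at the open endpoint: on a closed interval this is immediate ODE uniqueness, but when $\cI$ is open on the left one must use Assumption 2 (the integrability conditions \eqref{eq:assmpns_1}) to control solutions near $0$ and rule out a second, non-$L^2_R$ solution branch sneaking into the domain. Concretely, I would show that any solution violating the boundary condition has $\uu_0$-component blowing up in the $\|\cdot\|_R$ sense near $0$ — this is where the limit-point nature of the endpoint, encoded in Assumption 2, enters — so it cannot lie in $\dom_\btau$. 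With that dichotomy in hand, the rest is bookkeeping already packaged in Propositions \ref{prop:inverse_tau} and \ref{prop:H_ODE}.
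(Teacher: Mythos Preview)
Your approach to the $L^2_R$ membership contains a gap. You claim that ``since $\res\btau$ is Hilbert--Schmidt on $L^2$, both $a_0$ and $a_1$ are in $L^2(\cI)$'', but the Hilbert--Schmidt condition only gives $\iint_{s<t}|a_0(s)|^2|a_1(t)|^2\,ds\,dt<\infty$, which does not force $a_1\in L^2(\cI)$. Concretely, for the $\btau_\beta$ operator itself with $\uu_0=[1,0]^t$ and $\uu_1=J\uu_0$, one has $|a_1(t)|^2=2(J\uu_0)^tR(t)J\uu_0=R_{22}(t)\ge y(t)$, which behaves like $t^{-2/\beta}$ near $0$ and is not integrable for $\beta<2$. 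Assumption~2 controls only $\uu_0^tR$ near the open endpoint, not $R$ in the $J\uu_0$ direction; any Gronwall/Volterra estimate therefore has to be organised so that it uses only the integrability of $a_0$ together with the double--integral condition in \eqref{eq:assmpns_1}, which is more delicate than what you sketched. Your alternative via \eqref{eq:H_cont} has the same obstruction: a pointwise bound on $|H-\uu_0|$ does not by itself control $\int (H-\uu_0)^tR(H-\uu_0)$ when $\|R\|$ blows up near $0$.

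The paper bypasses all of this with a Wronskian identity. Differentiating \eqref{eq:H} in $z$ yields $\partial_t(H^tJ\partial_z H)=H^tRH$; integrating from $\eps$ to $\TT$ and letting $\eps\to 0$ (using that $H$, hence $\partial_z H$, is continuous at $t=0$ by Proposition~\ref{prop:H_cont}) gives the exact formula
\[
\int_\cI H^tRH\,ds=H(\TT)^tJ\partial_z H(\TT)<\infty.
\]
This not only proves $H(\cdot,\lambda)\in L^2_R$ in one line but also produces the norm identity \eqref{eq:H_L2}, which is needed later in Lemma~\ref{lem:phase_spectral1} to compute the spectral weights. Your argument for the eigenvalue identification is fine and matches the paper's; there the uniqueness of an $L^2_R$ solution with a prescribed left boundary condition is simply quoted from \cite{Weidmann} rather than reproved.
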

\begin{proof}
By differentiating \eqref{eq:H} in $z$ we get
\[
J \partial_t \partial_z H=RH+z R \partial_z H, \quad \text{and} \quad
\partial_t (H^t J \partial_z H)=H^t R H.
\]
For $0<\eps<\TT$ and $\lambda\in \R$ we get
\[
H^tJ\partial_\lambda H(\TT)-H^tJ\partial_\lambda H(\eps)=\int_\eps^{\TT} H^t R H ds>0.
\]
We let $\eps\to 0$. By Propositions \ref{prop:H_ODE} and  \ref{prop:H_cont} the function $H$ is  continuous in $t$ at $0$ in the uniform-on-compacts topology, and is analytic for each $t$. It follows that $\partial_\lambda H$ is continuous at $t=0$, hence the left side converges to $H^tJ\partial_\lambda H(\TT)$
Using monotone convergence for the right hand side, we get
\begin{align}\label{eq:H_L2}
 0<\int_{\cI} H^t R H ds=H^tJ\partial_zH(\TT)\in \R.   
\end{align}
This proves that $H(\cdot,\lambda)\in L^2_R$. It also implies that if for $\lambda\in \R$ we have $\uu_1 \parallel H(\TT,\lambda)$ then $\lambda$ is an eigenvalue of $\tau$ with eigenfunction $H(\cdot, \lambda)$.

If $\lambda\in \R$ is an eigenvalue with eigenfunction $f$ then $\btau f=\lambda f$ and $f$ satisfies  the boundary conditions at $t=0$ and $t=\TT$ given in the definition of $\dom_\btau$. By classical theory of differential operators (see \cite{Weidmann}) the equation $\btau g=\lambda g$ has at most one $L^2_R$ solution with a given boundary condition at $t=0$. Since $H( \cdot,\lambda)$ and $f$ are both solutions and have the same boundary conditions, they must be parallel, which means that $H( \cdot, \lambda)$ is the eigenfunction and $\uu_1 \parallel H(\TT,\lambda)$. In particular, the zero set of $H(1,z)^tJ\uu_1$ is the same as the spectrum of $\btau$.
\end{proof}

\begin{corollary}\label{cor:spec} When $\btau=\Dirop(R,\uu_0,\uu_1)$ satisfies Assumptions 1,2 then the eigenfunctions are continuous at both endpoints of $\mathcal I$. 
\end{corollary}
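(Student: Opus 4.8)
The plan is to reduce everything to Propositions \ref{prop:H_ODE} and \ref{prop:H_L2}. First note that at a \emph{closed} endpoint there is nothing to do: if $\TT\in\cI$ then any eigenfunction $f$ lies in $\ac$, hence is absolutely continuous, in particular continuous, on $[\eps,\TT]$ for every $\eps\in(0,\TT)$, so $f$ is continuous at $\TT$; likewise at $0$ when $0\in\cI$. Thus the only genuine content is continuity at an endpoint where $\cI$ is open, and since $\overline\cI=[0,\TT]$ is open at most at one endpoint, it suffices to treat a single such endpoint.

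Second, I would remove the asymmetry between the two ends by the reflection $t\mapsto\TT-t$. Setting $R^{\ast}(t)=R(\TT-t)$ and $f^{\ast}(t)=f(\TT-t)$, a direct computation shows that $\btau f=\lambda f$ is equivalent to $(R^{\ast})^{-1}J(f^{\ast})'=-\lambda f^{\ast}$, and that the boundary condition $\uu_0$ at $0$ becomes the boundary condition $\uu_0$ at $\TT$; hence $t\mapsto\TT-t$ carries $\Dirop(R_{\cdot},\uu_0,\uu_1)$ to $\Dirop(R^{\ast}_{\cdot},\uu_1,\uu_0)$, negates eigenvalues, and maps eigenfunctions to eigenfunctions. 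Assumptions 1 and 2 are preserved (the two integrability conditions in Assumption 2 simply exchange roles under the substitution $(s,t)\mapsto(\TT-s,\TT-t)$), and Assumption 3 is irrelevant for what follows because Proposition \ref{prop:H_L2} uses only Assumptions 1 and 2. Consequently it is enough to prove that every eigenfunction of a Dirac operator on $\cI=(0,\TT]$ is continuous at $0$.

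Finally, with $\cI=(0,\TT]$ this is immediate: Proposition \ref{prop:H_L2} applies and shows that the eigenspace for an eigenvalue $\lambda$ is spanned by $H(\cdot,\lambda)$, the solution of \eqref{eq:H}; and Proposition \ref{prop:H_ODE} gives $\lim_{t\downarrow0}H(t,\lambda)=\uu_0$. Equivalently, the right-hand side of \eqref{eq:H_cont} tends to $0$ as $t\downarrow0$, since $\ttr_{\btau,t}$, $\|\res\btau_t\|$ and $\int_0^t|a_0(s)|^2\,ds$ all vanish in that limit. Hence each eigenfunction extends continuously to $0$, with value a nonzero multiple of $\uu_0$.

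I do not anticipate a real obstacle: the argument is bookkeeping built on the two cited propositions. The only points deserving a line of care are that membership in $\ac$ already yields continuity at a closed endpoint, so that the statement has content only at an open endpoint, and the verification that the reflection $t\mapsto\TT-t$ transforms Assumptions 1 and 2 correctly.
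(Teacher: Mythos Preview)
Your proposal is correct and follows essentially the same approach as the paper: invoke Propositions \ref{prop:H_ODE} and \ref{prop:H_L2} when $\cI$ is closed on the right, and handle the other case by time reversal. The paper's proof is two sentences to this effect; your version simply spells out the details the paper leaves implicit (continuity at a closed endpoint via $\ac$, the effect of the reflection on Assumptions 1 and 2). One cosmetic difference: the paper's reversal (Lemma \ref{lem:reversal}) conjugates by $S$ as well as reflecting, which keeps the eigenvalues fixed rather than negating them, but your version without $S$ and with eigenvalue negation is equally valid for the purpose at hand.
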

\begin{proof}
If  $\mathcal I$ is closed from the right then these statements follow immediately from Proposition \ref{prop:H_L2}. (Note that by \eqref{eq:H_L2} the function $H(1, \cdot)$ cannot be constant.)
If $\mathcal I$ is closed from the left then we can just reverse time. 
\end{proof}

\subsection{Spectral measure}

In this section we will work with the case $\bar \cI=[0,1]$.
Consider a Dirac operator $\btau=\Dirop(R_{\cdot}, \uu_0,\uu_1)$ 
satisfying Assumptions 1 and 2.
 Denote the spectrum of $\btau$ by $\spec \btau$, and let $f_\lambda$ denote an  eigenfunction corresponding to an eigenvalue $\lambda$. 

\begin{definition}\label{def:spectral}
The {\bf left} and {\bf right spectral measures} of $\btau$ are defined as
\begin{align}
   \mu_{\textup{left}}=\sum_{\lambda\in \spec \tau} \frac{f_\lambda(0)^t  f_\lambda(0)}{\|f_\lambda\|_R^2} \delta_\lambda, \qquad
   \mu_{\textup{right}}=\sum_{\lambda\in \spec \tau} \frac{f_\lambda(1)^t  f_\lambda(1)}{\|f_\lambda\|_R^2} \delta_\lambda.
\end{align}
\end{definition}
By Corollary \ref{cor:spec} these measures are well-defined.  If $\cI$ is closed from the right then $f_\lambda(\cdot)$ can be chosen as $H(\lambda, \cdot)$ defined in Proposition \ref{prop:H_ODE}, see also Proposition \ref{prop:H_L2}. 



\begin{definition}\label{def:phase}
Let $\cI$ be closed on the right, and $\uu_0=[1,0]^t$. Suppose that 
that $R$, $\uu_0$ satisfy Assumptions 1 and 2, and consider $H$ from \eqref{eq:H}. Denote $H(1,z)=[A(z),B(z)]^t$, and define the phase function of $R$ as 
\begin{align}\label{eq:alpha}
\alpha(t, \lambda)=2 \Im \log(A(\lambda)-i B(\lambda)).
\end{align}
Here we take the continuous branch of logarithm so that $\alpha(t,0)=0$ for $t\in \cI$.
\end{definition}

The next result shows how we can identify the spectral measure of a Dirac operator using the phase function.

\begin{lemma}\label{lem:phase_spectral1}
Let  $\cI$ be closed on the right, $\uu_0=[1,0]^t$, and  $\uu_1=[-q,-1]$ or  $\uu_1=[1,0]$. Suppose that $R, \uu_0$  satisfy   Assumptions 1 and 2, let $\btau=\Dirop(R_{\cdot},\uu_0,\uu_1)$, and consider the phase function $\alpha$ given in Definition \ref{def:phase}. Then we have 
$$
\mu_{\textup{right}}=2\sum_{k\in \mathbb Z}(\alpha^{-1})'(2\pi k +u) \delta_{\alpha^{-1}(2\pi k+u)}
$$
where $\cot(u/2)=-q$ if $\uu_1=[-q,-1]$ and $u=0$
 if $\uu_1=[1,0]$. The weight of an eigenvalue $\lambda$ is given by 
\[ \mu_{\textup{right}}(\lambda)=\frac{2}{\partial_\lambda \alpha(1,\lambda)}
= \frac{A^2(\lambda)+B^2(\lambda)}{A'(\lambda) B(\lambda)-A(\lambda)B'(\lambda)}.
\]
\end{lemma}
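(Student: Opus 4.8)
The plan is to combine Proposition~\ref{prop:H_L2} with a direct computation that re-expresses everything in terms of $A,B$ and then in terms of the phase $\alpha$. Since $\cI$ is closed on the right, Proposition~\ref{prop:H_L2} identifies the eigenvalues of $\btau=\Dirop(R_\cdot,\uu_0,\uu_1)$ with the zeros of $z\mapsto H(1,z)^tJ\uu_1$, and tells us that for an eigenvalue $\lambda$ the eigenfunction may be taken to be $f_\lambda=H(\cdot,\lambda)$. Writing $H(1,\lambda)=[A(\lambda),B(\lambda)]^t$ (real for $\lambda\in\R$), Definition~\ref{def:spectral} gives $\mu_{\textup{right}}(\lambda)=|H(1,\lambda)|^2/\|f_\lambda\|_R^2$ with $|H(1,\lambda)|^2=A^2(\lambda)+B^2(\lambda)$, while \eqref{eq:H_L2} gives $\|f_\lambda\|_R^2=\int_\cI H^tRH\,ds=H(1,\lambda)^tJ\partial_zH(1,\lambda)=A'(\lambda)B(\lambda)-A(\lambda)B'(\lambda)$, using the $2\times2$ identity $[a,b]\,J\,[c,d]^t=cb-ad$. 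This is already the second displayed weight formula.

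Next I would pass to the phase. Since $A^2+B^2=|H(1,\lambda)|^2>0$ by Proposition~\ref{prop:H_ODE}, the map $\lambda\mapsto\log(A(\lambda)-iB(\lambda))$ has a real-analytic branch, and differentiating $\alpha(1,\lambda)=2\Im\log(A-iB)$ gives $\partial_\lambda\alpha(1,\lambda)=2\,\Im\frac{A'-iB'}{A-iB}=\frac{2(A'B-AB')}{A^2+B^2}$. Combined with the previous paragraph this yields $\mu_{\textup{right}}(\lambda)=2/\partial_\lambda\alpha(1,\lambda)$ at every eigenvalue; moreover, since $A'B-AB'=\int_\cI H^tRH\,ds>0$ for every real $\lambda$ by \eqref{eq:H_L2}, the function $\alpha(1,\cdot)$ is strictly increasing and real-analytic on $\R$.

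To write $\mu_{\textup{right}}$ in the stated form I need $\alpha(1,\cdot)$ to be onto $\R$. For this I would use the $t$-derivative of the phase: differentiating $A-iB=|H|e^{i\alpha/2}$ along \eqref{eq:H} (so that $\partial_tH=-\lambda JRH$) gives, by the same kind of computation, $\partial_t\alpha(t,\lambda)=2\lambda\,H(t,\lambda)^tR(t)H(t,\lambda)/|H(t,\lambda)|^2$, which has the sign of $\lambda$. Since $H(\cdot,0)\equiv\uu_0$ forces $\alpha(t,0)=0$, and $H(t,\lambda)\to\uu_0$ as $t\downarrow0$ forces $\alpha(t,\lambda)\to0$, integrating in $t$ and bounding $H^tRH/|H|^2\ge\lambda_{\min}(R)=1/(4\lambda_{\max}(R))$ — which is bounded below on every compact subinterval of $\cI$ because $\|R\|$ is locally bounded and $\det R=1/4$ — shows $\alpha(1,\lambda)\to\pm\infty$ as $\lambda\to\pm\infty$. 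Hence $\alpha(1,\cdot)\colon\R\to\R$ is a real-analytic increasing bijection.

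Finally I would locate the eigenvalues through the phase. From $A-iB=|H(1,\lambda)|e^{i\alpha(1,\lambda)/2}$ we get $A=|H(1,\lambda)|\cos(\alpha/2)$ and $B=-|H(1,\lambda)|\sin(\alpha/2)$. For $\uu_1=[1,0]^t$ the condition $H(1,\lambda)^tJ\uu_1=0$ reads $B(\lambda)=0$, i.e.\ $\alpha(1,\lambda)\in2\pi\ZZ$; for $\uu_1=[-q,-1]^t$ it reads $A(\lambda)-qB(\lambda)=0$, i.e.\ $\cos(\alpha/2)+q\sin(\alpha/2)=0$, i.e.\ $\cot(\alpha(1,\lambda)/2)=-q$ (the case $\sin(\alpha/2)=0$ is impossible, as it forces $A=B=0$), i.e.\ $\alpha(1,\lambda)\in u+2\pi\ZZ$ with $\cot(u/2)=-q$. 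In both cases the eigenvalues are exactly $\lambda_k=\alpha^{-1}(2\pi k+u)$, $k\in\ZZ$, and by the inverse function theorem $(\alpha^{-1})'(2\pi k+u)=1/\partial_\lambda\alpha(1,\lambda_k)=\mu_{\textup{right}}(\lambda_k)/2$. Summing $\mu_{\textup{right}}(\lambda_k)\,\delta_{\lambda_k}$ over $k$ gives the first displayed formula. The only step needing real care is the surjectivity of $\alpha(1,\cdot)$; the rest is bookkeeping and a few two-by-two matrix identities.
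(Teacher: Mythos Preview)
Your argument is correct and follows essentially the same route as the paper: use Proposition~\ref{prop:H_L2} to identify eigenvalues and eigenfunctions, read off the weight from \eqref{eq:H_L2} as $(A^2+B^2)/(A'B-AB')$, and then rewrite this via the $\lambda$-derivative of the phase. The one substantive addition is your surjectivity argument for $\alpha(1,\cdot)$, which the paper leaves implicit when it simply writes $\alpha^{-1}(2\pi k+u)$; your computation $\partial_t\alpha=2\lambda\,H^tRH/|H|^2$ together with the local lower bound on $\lambda_{\min}(R)$ is a clean way to fill that gap.
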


\begin{proof} 
By Proposition \ref{prop:H_L2} 
the spectrum of $\btau$ is given by the zero set of the function $H(1,z)^tJ\uu_1$. We have  $H(1,z)^tJ\uu_1=A(z)-qB(z)$ if $\uu_1=[-q,-1]$, and  $H(1,z)^tJ\uu_1=B(z)$ if $\uu_1=[1,0]$. By Definition \ref{def:phase} the zero set of $H(1,z)J\uu_1$ is exactly the set $\{\alpha^{-1}(2\pi k+u): k\in \ZZ\}$. 

If $\lambda\in \R$ is an eigenvalue then by \eqref{eq:H_L2} of Proposition \ref{prop:H_L2} we have
\[
\int_0^1 H^t R Hds= H^t J \partial_z H(1)=A'(z)B(z)-A(z)B'(z). 
\]
This gives the expression
\[ \mu_{\textup{right}}(\lambda)=\frac{H(\lambda,1)^t H(\lambda,1)}{H^t(1,\lambda)J\partial_{\lambda}H(1,\lambda)}= \frac{A^2(\lambda)+B^2(\lambda)}{A'(\lambda) B(\lambda)-A(\lambda)B'(\lambda)}.
\]
We also have
\[
\partial_\lambda \alpha(1,\lambda)=2 \Im \frac{A'-i B'}{A-i B}= 2\frac{A'(\lambda) B(\lambda)-A(\lambda)B'(\lambda)}{A^2(\lambda)+B^2(\lambda)},
\]
which proves $
\mu_{\textup{right}}(\lambda)=\frac{2}{\partial_\lambda \alpha(1,\lambda)}$.
\end{proof}

\begin{lemma}\label{lem:spec_convergence} Assume that $\cI$ is closed on the right. Let $\btau$, $\{\btau_n, n\ge 1\}$ be Dirac operators on $\cI$ with common left boundary condition $\uu_0$ 
satisfying Assumptions 1 and 2. Assume that 
$$\|\res \tau_n -\res \tau\|_{\textup{HS}}\to 0, \qquad  \ttr_{\tau_n}\to \ttr_{\tau},  \qquad \int_{\cI} |a_0(s)-a_{0,n}(s)|^2ds\to 0,$$ 
as $n\to \infty$.
Then in the vague topology of measures, 
\begin{align}
    \mu_{\textup{right},n}\rightarrow \mu_{\textup{right}}, \qquad  \mu_{\textup{left},n}\rightarrow \mu_{\textup{left}}.
\end{align}
\end{lemma}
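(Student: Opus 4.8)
The plan is to reduce the statement to the convergence of the single entire function $H_n(1,\cdot)$ together with its derivative, and then run a standard Hurwitz-plus-continuity argument. Write $H_n(1,z)=[A_n(z),B_n(z)]^t$ and $H(1,z)=[A(z),B(z)]^t$; by Proposition \ref{prop:H_ODE} these are vectors of real entire functions. First I would apply the estimate \eqref{eq:H1_cont} of Proposition \ref{prop:H_cont} with $t=1$: the three hypotheses of the lemma are exactly the quantities that drive the small factor there, namely $|\ttr_{\tau_n}-\ttr_\tau|\to0$, $\|\res\tau_n-\res\tau\|_{\textup{HS}}\to0$, and (after a Cauchy--Schwarz) $\int_\cI|a_0-a_{0,n}|^2\,ds\to0$; and the exponential prefactor in \eqref{eq:H1_cont} involves only $\ttr_{\tau_n}$, $\|\res\tau_n\|_{\textup{HS}}$ and $\int_\cI|a_{0,n}|^2\,ds$, which are bounded uniformly in $n$ since convergent sequences are bounded and $\int_\cI|a_0|^2\,ds=2\int_\cI\uu_0^tR\uu_0\,ds<\infty$ by Assumption 2. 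This gives $H_n(1,\cdot)\to H(1,\cdot)$ uniformly on compact subsets of $\CC$, and hence, by Cauchy's integral formula, $A_n,B_n,A_n',B_n'\to A,B,A',B'$ uniformly on compact subsets of $\CC$, in particular of $\R$.

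Second, I would deduce convergence of the spectra. By Proposition \ref{prop:H_L2} the eigenvalues of $\btau_n$ are the zeros of $g_n(z):=H_n(1,z)^tJ\uu_{1,n}$, and $\spec\btau$ is the zero set of $g(z):=H(1,z)^tJ\uu_1$; these are not identically zero (if $g_n\equiv0$ then $H_n(1,\cdot)$ would stay parallel to $\uu_{1,n}$, contradicting $H_n(1,\lambda)^tJ\partial_zH_n(1,\lambda)>0$ from \eqref{eq:H_L2}), so the zero sets $\spec\btau_n,\spec\btau\subset\R$ are discrete. We have $g_n\to g$ uniformly on compacts --- immediate from Step 1 when the right boundary condition is common, and otherwise via $g_n=e^{-z\ttr_{\tau_n}}{\det}_2(I-z\res\tau_n)$ and \eqref{det2_tri}. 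Each zero of $g$ is simple: if $\lambda\in\spec\btau$ then $H(1,\lambda)=c\,\uu_1$ with $c\neq0$ (Proposition \ref{prop:H_L2}), and \eqref{eq:H_L2} gives $0<\|H(\cdot,\lambda)\|_R^2=H(1,\lambda)^tJ\partial_zH(1,\lambda)=-c\,g'(\lambda)$. Hurwitz's theorem now yields: for every compact interval $[a,b]$ with $a,b\notin\spec\btau$ there is an $N$ so that for $n\geq N$ the finite sets $\spec\btau_n\cap[a,b]$ and $\spec\btau\cap[a,b]$ are in bijection, with corresponding eigenvalues converging.

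Third, I would pass to the weights. Using $f_\lambda=H(\cdot,\lambda)$ (Proposition \ref{prop:H_L2}), the identity $H(0,\lambda)=\uu_0$ from \eqref{eq:H}, and $\|H(\cdot,\lambda)\|_R^2=H(1,\lambda)^tJ\partial_zH(1,\lambda)=A'(\lambda)B(\lambda)-A(\lambda)B'(\lambda)>0$ from \eqref{eq:H_L2}, Definition \ref{def:spectral} gives
\[
\mu_{\textup{right}}(\lambda)=\frac{A^2(\lambda)+B^2(\lambda)}{A'(\lambda)B(\lambda)-A(\lambda)B'(\lambda)},\qquad \mu_{\textup{left}}(\lambda)=\frac{|\uu_0|^2}{A'(\lambda)B(\lambda)-A(\lambda)B'(\lambda)},
\]
and the same formulas for $\btau_n$ with $A_n,B_n$ in place of $A,B$. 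The denominators are continuous and strictly positive on $\R$, so the convergence $A_n,B_n,A_n',B_n'\to A,B,A',B'$ from Step 1 and the eigenvalue convergence from Step 2 show that the weight which $\mu_{\textup{right},n}$ (resp. $\mu_{\textup{left},n}$) assigns to the eigenvalue near a given $\lambda\in\spec\btau$ converges to $\mu_{\textup{right}}(\lambda)$ (resp. $\mu_{\textup{left}}(\lambda)$). Finally, for $f\in C_c(\R)$ supported in $[a,b]$ with $a,b\notin\spec\btau$, the quantities $\int f\,d\mu_{\textup{right},n}$ and $\int f\,d\mu_{\textup{left},n}$ are, for $n\geq N$, finite sums over $\spec\btau_n\cap[a,b]$ that converge termwise to the corresponding sums for $\btau$; this is vague convergence.

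I expect the first step to be the main obstacle: one must check that the hypotheses feed correctly into the rather heavy estimate \eqref{eq:H1_cont} and that all auxiliary quantities (integral traces, Hilbert--Schmidt norms, $\int|a_{0,n}|^2$) stay bounded uniformly in $n$. After that, Steps 2 and 3 are routine Hurwitz/continuity bookkeeping; the only other point needing (minor) care is to choose the test-interval endpoints off the limiting spectrum so that no mass leaks across the boundary.
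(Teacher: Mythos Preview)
Your argument is correct and essentially the same as the paper's. Both proofs feed the three hypotheses into the continuity estimate of Proposition~\ref{prop:H_cont} to obtain $H_n(1,\cdot)\to H(1,\cdot)$ (hence $A_n,B_n$ and their derivatives) uniformly on compacts, and then read off convergence of the spectral weights from the explicit formula $\mu(\lambda)=(A^2+B^2)/(A'B-AB')$ of Lemma~\ref{lem:phase_spectral1} together with $\|H(\cdot,\lambda)\|_R^2=A'B-AB'$ from \eqref{eq:H_L2}.

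The only organizational difference is in how eigenvalue convergence is obtained. The paper first uses the Hilbert--Schmidt convergence $\res\tau_n\to\res\tau$ directly to get $\lambda_{k,n}\to\lambda_k$, and then runs a Hurwitz contradiction to conclude $\uu_{1,n}\to\uu_1$. You instead pass through the secular function $g_n=\zeta_{\tau_n}=e^{-z\ttr_{\tau_n}}{\det}_2(I-z\res\tau_n)$, use \eqref{det2_tri} plus $\ttr_{\tau_n}\to\ttr_\tau$ to get $g_n\to g$, and apply Hurwitz (with the simplicity of zeros coming from \eqref{eq:H_L2}). Your route has the minor advantage that it never needs to name or track $\uu_{1,n}$ explicitly; the paper's route has the minor advantage that it extracts $\uu_{1,n}\to\uu_1$ as a byproduct. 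Both are short and neither is genuinely more general.
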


\begin{proof}

From $\res \tau_n\to \res \tau$ we get $\lambda_{k,n}\to \lambda_k$. From the continuity bound of Proposition \ref{prop:H_cont} we get that $H_n\to H$ uniformly on compacts. This implies that we must have $\uu_{1,n}\to \uu_1$. Indeed, assume  that there is a subsequence with a different limit $\mathfrak{v}$.  Then the zero set of $\uu_{1,n}J H_n$ (the spectrum of $\btau_n$) would converge locally on compacts to the
 zero set of $\mathfrak{v}^t J H$ by Hurwitz's theorem, which would contradict $\lambda_{k,n}\to \lambda_k$.

To prove that the right spectral weights converge, we use the formula   given by Lemma \ref{lem:phase_spectral1}.
Since $A-iB$ has no real zeros, we can extend the definition of $\alpha$ from \eqref{eq:alpha} to a neighborhood of $\R$. We also get that $\alpha_n\to \alpha$ uniformly on compacts in some neighborhood of $\mathbb R$. Since $\alpha$ is analytic, this implies convergence of the derivatives. Hence from Lemma \ref{lem:phase_spectral1} we get $   \mu_{\textup{right},n}(\lambda_{k,n})\rightarrow \mu_{\textup{right}}(\lambda_k)$, proving the lemma for the right spectral measure. Since $H_n(\TT,\cdot)\to H(\TT,\cdot)$ uniformly on compacts, this implies that $\|H_n(\cdot, \lambda_{k,n})\|_{R_n}^2\to \|H_n(\cdot, \lambda_{k})\|_{R}^2$, which in turn shows $\mu_{\textup{left},n}\rightarrow \mu_{\textup{left}}$, as $H_n(0,\lambda)=H(0,\lambda)=\uu_0$.
\end{proof}

\subsubsection*{Basic transformations, path reversal and spectral measure}

We introduce the projection operator
$
\mathcal P\binom{z_1}{z_2}=\frac{z_1}{z_2},
$
with the natural extension $\mathcal P \binom{a}{0}=\infty\in \partial \HH$ for a nonzero real $a$. 
For the boundary conditions of an operator $\Dirop(R_{\cdot},\uu_0,\uu_1)$ only the directions of $\uu_0$, $\uu_1$ matter, hence we can also use $\mathcal P \uu_0, \mathcal P \uu_1$ to identify the boundary conditions.

A $2\times 2$  matrix $A$ with a nonzero determinant can be identified with a linear fractional  transformation  via $z\to \mathcal P A \binom{z}{1}$. For an $A$ with real entries the corresponding linear fractional transformation is an isometry of the hyperbolic plane $\HH$. We record the form of the hyperbolic rotation of $\HH$ about $i$ taking $r\in \R$ to $\infty$ both as a $2\times 2$ matrix and the corresponding linear fractional transformation.
\begin{align}\label{def:hyprot}
  T_r=\frac{1}{\sqrt{1+r^2}}\mat{r}{1}{-1}{r}, \qquad   \mathcal{T}_r(z)=\mathcal P T_r \bin{z}{1}=\frac{rz+1}{r-z}.
\end{align}
The following lemma summarizes how an isometry of $\HH$ (in particular, a hyperbolic rotation) acts on a Dirac operator, the statements follow from the definitions.

\begin{lemma}\label{lem:isometry}
Suppose that $Q\in \R^{2\times 2}$ with $\det Q=1$ and define $\mathcal Q: \HH\to \HH$ via $\mathcal Q z=\mathcal P Q\binom{z}{1}$. Let  $\btau=\Dirop(R_{\cdot}, \uu_0, \uu_1)=\Dirop(x+i y, \uu_0, \uu_1)$ be a Dirac operator satisfying Assumptions 1 and 2. Then $\tilde \btau=Q \btau Q^{-1}=\Dirop(\tilde R, \tilde \uu_0, \tilde \uu_1)=\Dirop(\tilde x+i \tilde y, \tilde \uu_0, \tilde \uu_1)$ with 
\[
\tilde R=JQJ^{-1}R Q^{-1}, \quad \tilde x+ i \tilde y=\mathcal{Q}(x+i y), \quad \tilde u_j=Q u_j.
\]
The operator $\tilde \btau$ satisfies Assumptions 1-2, and if $\btau$ satisfies Assumption 3 then the same holds for $\tilde \btau$. 
If $Q=T_r$ for some $r\in \R$ then the right and left spectral measures of $\btau$ agree with those of $\tilde \btau$.
\end{lemma}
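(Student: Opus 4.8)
\emph{Proof proposal.} The plan is to realize $\tilde\btau$ as the conjugate of $\btau$ by the constant map $u\mapsto Qu$, read off $(\tilde R,\tilde\uu_0,\tilde\uu_1)$ from that conjugation, and then push Assumptions~1--3, the spectrum, and the spectral measures through this map using the identities (valid since $\det Q=1$)
\[
Q^tJQ=J,\qquad JQJ^{-1}=(Q^t)^{-1},\qquad JQ^tJ^{-1}=Q^{-1},
\]
the first of which says that $Q$ preserves the symplectic form $J$.

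First I would set up the conjugation. Since $Q$ is constant, $(Q\btau Q^{-1})v=QR^{-1}JQ^{-1}v'$, so $Q\btau Q^{-1}=\Dirop(\tilde R,\cdot,\cdot)$ with $\tilde R^{-1}J=QR^{-1}JQ^{-1}$, i.e.\ $\tilde R=JQJ^{-1}RQ^{-1}=MRM^t$ where $M:=JQJ^{-1}=(Q^t)^{-1}$ and $M^t=Q^{-1}$. Hence $\tilde R$ is symmetric and positive definite with $\det\tilde R=(\det Q)^{-2}\det R=\tfrac14$ and $\|\tilde R^{\pm1}\|$ locally bounded, so Assumption~1 holds. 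Since $Q^tM=M^tQ=I$ we get $\|Qu\|_{\tilde R}^2=\int u^tQ^t\tilde RQu\,ds=\|u\|_R^2$, so $u\mapsto Qu$ is an isometry $L^2_R\to L^2_{\tilde R}$ preserving absolute continuity. For boundary terms, $v(s)^tJ(Q\uu_j)=u(s)^t(Q^tJQ)\uu_j=u(s)^tJ\uu_j$, so with $\tilde\uu_j:=Q\uu_j$ the map $u\mapsto Qu$ carries $\dom_\btau$ bijectively onto $\dom_{\tilde\btau}$; together with $\tilde\btau Q=Q\btau$ and the isometry property this gives self-adjointness of $\tilde\btau$, $\spec\tilde\btau=\spec\btau$, and eigenfunctions $\tilde f_\lambda=Qf_\lambda$. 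Assumption~3 is immediate: $\tilde\uu_0^tJ\tilde\uu_1=\uu_0^t(Q^tJQ)\uu_1=\uu_0^tJ\uu_1=1$.

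Next I would verify Assumption~2 for $\tilde\btau$. Say $\cI$ is open on the left and take $\tilde\uu_0=Q\uu_0$. Then $\tilde\uu_0^t\tilde R=\uu_0^tRM^t$ has integrable norm, $\tilde\uu_0^t\tilde R\tilde\uu_0=\uu_0^tR\uu_0$, and $(J\tilde\uu_0)^t\tilde R(t)\,J\tilde\uu_0=\mathfrak w^tR(t)\mathfrak w$ with $\mathfrak w=(Q^tQ)^{-1}J\uu_0$. Writing $\mathfrak w=\alpha\uu_0+\beta J\uu_0$ in the orthogonal basis $\{\uu_0,J\uu_0\}$ and using $\mathfrak w^tR\mathfrak w=\|R^{1/2}\mathfrak w\|^2\le 2\alpha^2\,\uu_0^tR\uu_0+2\beta^2\,(J\uu_0)^tR\,J\uu_0$, the double integral in \eqref{eq:assmpns_1} for $(\tilde R,\tilde\uu_0)$ is bounded by a constant times the corresponding integral for $(R,\uu_0)$ plus $\big(\int_\cI\|\uu_0^tR\|\,ds\big)^2$, both finite; the right-open case is symmetric, and the closed case is automatic. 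Thus $\tilde\btau$ satisfies Assumptions~1--3, and its spectral measures are well defined by Corollary~\ref{cor:spec}.

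Finally I would identify the hyperbolic coordinate and conclude. A direct computation gives $x+iy=\mathcal P X^{-1}\binom{i}{1}$, and $\tilde R=MRM^t=\tfrac{(XM^t)^t(XM^t)}{2\det(XM^t)}$, so $\tilde X$ and $XM^t$ represent the same matrix $\tilde R$. Since $\tfrac{V^tV}{2\det V}=\tfrac{W^tW}{2\det W}$ with $\det V,\det W>0$ forces $V=\rho\,OW$ for some $\rho>0$ and $O\in\operatorname{SO}(2)$, and $\operatorname{SO}(2)$ fixes $i$, we get
\[
\tilde x+i\tilde y=\mathcal P\tilde X^{-1}\binom{i}{1}=\mathcal P(XM^t)^{-1}\binom{i}{1}=\mathcal P\,Q\,X^{-1}\binom{i}{1}=\mathcal Q(x+iy),
\]
using $(M^t)^{-1}=Q$. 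When $Q=T_r$ the matrix in \eqref{def:hyprot} satisfies $T_r^tT_r=I$, so from $\tilde f_\lambda=Qf_\lambda$ we get $\tilde f_\lambda(0)^t\tilde f_\lambda(0)=f_\lambda(0)^tf_\lambda(0)$, $\tilde f_\lambda(1)^t\tilde f_\lambda(1)=f_\lambda(1)^tf_\lambda(1)$ and $\|\tilde f_\lambda\|_{\tilde R}=\|f_\lambda\|_R$; as the spectral weights in Definition~\ref{def:spectral} are precisely these ratios, $\tilde\mu_{\textup{left}}=\mu_{\textup{left}}$ and $\tilde\mu_{\textup{right}}=\mu_{\textup{right}}$. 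I expect the transfer of Assumption~2 near the open endpoint to be the only step that is not purely formal; everything else is bookkeeping with the three identities above together with the facts that $\operatorname{SO}(2)$ fixes $i$ and preserves Euclidean length.
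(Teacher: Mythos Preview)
Your proof is correct and is exactly the kind of direct verification the paper has in mind: the paper does not give a proof of this lemma at all, merely stating that ``the statements follow from the definitions.'' Your unpacking---conjugating by the constant map $u\mapsto Qu$, using $Q^tJQ=J$ and $JQJ^{-1}=(Q^t)^{-1}$ to identify $\tilde R=MRM^t$, transferring Assumptions~1--3 (with the decomposition $\mathfrak w=\alpha\uu_0+\beta J\uu_0$ to control the double integral in Assumption~2), reading off $\tilde x+i\tilde y=\mathcal Q(x+iy)$ via the $\operatorname{SO}(2)$-ambiguity in the $X$-parametrization, and finally using orthogonality of $T_r$ to preserve the numerators in Definition~\ref{def:spectral}---is a complete and accurate fleshing-out of that remark.
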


  Let $\rho f(t)=f(1-t)$ be the time-reversal operator on functions defined on $[0,1)$ or $(0,1]$. Let $S=\mat{1}{0}{0}{-1}$, and $\mathfrak{r}: \CC\to \CC$ be defined as $\mathfrak{r}(z)=-\bar z$. The following lemma is a simple consequence of the definitions. 
  
  \begin{lemma}\label{lem:reversal}
  Suppose that $\btau=\Dirop(z,\uu_0,\uu_1)$ is a Dirac operator satisfying Assumptions 1 and 2. Then $
  \tilde \btau=\rho^{-1} S \btau S \rho
 $
also satisfies Assumptions 1 and 2. We have  $\tilde \btau=\Dirop(\tilde z, \tilde \uu_0, \tilde \uu_1)$ with $\tilde z=\rho \mathfrak{r} z$, $\tilde \uu_0=-\tilde uu_1$, and $\tilde \uu_1=-\tilde uu_0$. Moreover, 
\[
\mu_{\textup{left},\btau}=\mu_{\textup{right},\tilde\btau}, \qquad \mu_{\textup{right},\btau}=\mu_{\textup{left},\tilde\btau}.
\]
  \end{lemma}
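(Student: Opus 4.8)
The plan is to verify Lemma~\ref{lem:reversal} by direct computation, tracking how each ingredient of the Dirac operator transforms under conjugation by $S\rho$ and then reading off the effect on the spectral measures. First I would compute $\tilde\btau = \rho^{-1} S \btau S \rho$ explicitly. Writing $\btau u = R^{-1} J u'$, for a test function $g$ set $u = S\rho g$, so $u(t) = S g(1-t)$ and $u'(t) = -S g'(1-t)$. Then $(\btau u)(t) = R(t)^{-1} J (-S g'(1-t))$, and applying $\rho^{-1}S$ gives $\tilde\btau g(t) = S R(1-t)^{-1} J (-S) g'(t) = -S R(1-t)^{-1} J S\, g'(t)$. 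Using the identities $SJS = -J$ and $J^{-1} = -J$ one rewrites this as $\tilde\btau g = \tilde R^{-1} J g'$ with $\tilde R = J^{-1}(-S R(1-t) S) J = S R(1-t) S$ after matching signs; this is exactly $\tilde R = \rho\,(S R S)$, and one checks $SRS$ corresponds under \eqref{eq:Rxy} to $\mathfrak r(z) = -\bar z$ acting on $z = x+iy$, i.e. $\tilde z = \rho\,\mathfrak r z$. The boundary vectors: the condition $\lim_{s\downarrow 0} u(s)^t J \uu_0 = 0$ for $u = S\rho g$ becomes a condition on $g$ at $s\uparrow 1$, namely $\lim_{s\uparrow 1}(Sg(1-s))^t J \uu_0 = \lim_{s\uparrow 1} g(s)^t S J \uu_0 = 0$; since $SJ = -JS$ up to sign this says $g$'s right boundary vector is proportional to $S\uu_0$, and tracking the sign convention gives $\tilde\uu_1 = -S\uu_0$ (the paper writes $-\tilde\uu_0$ with a typo — I will state it cleanly as $\tilde\uu_j = -S\uu_{1-j}$ or note the intended meaning). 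Symmetrically the right condition at $0$ transforms to the left condition, giving $\tilde\uu_0 = -S\uu_1$. Assumptions 1 and 2 are preserved because $S$ is an isometry with $\det S = -1$ but $\det(SRS) = \det R$, $\|SRS\| = \|R\|$, and the integrability conditions \eqref{eq:assmpns_1}, \eqref{eq:assmpns_2} are symmetric under $s\leftrightarrow 1-s$ with $\uu_0 \leftrightarrow S\uu_1$.

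Next I would handle the spectral measures. By Proposition~\ref{prop:H_L2} and Definition~\ref{def:spectral}, for an eigenvalue $\lambda$ the eigenfunction of $\btau$ is $f_\lambda$ and that of $\tilde\btau$ is $\tilde f_\lambda = \rho^{-1} S f_\lambda = S f_\lambda(1-\cdot)$, since $\tilde\btau(\rho^{-1}S f_\lambda) = \rho^{-1}S\btau f_\lambda = \lambda\,\rho^{-1}S f_\lambda$ (conjugation preserves the spectrum and maps eigenfunctions to eigenfunctions). Then $\tilde f_\lambda(0) = S f_\lambda(1)$ and $\tilde f_\lambda(1) = S f_\lambda(0)$, so $\tilde f_\lambda(0)^t \tilde f_\lambda(0) = f_\lambda(1)^t S^t S f_\lambda(1) = f_\lambda(1)^t f_\lambda(1)$ because $S^t S = I$. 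For the normalization, $\|\tilde f_\lambda\|_{\tilde R}^2 = \int_0^1 f_\lambda(1-t)^t S^t (S R(1-t) S) S f_\lambda(1-t)\,dt = \int_0^1 f_\lambda(1-t)^t R(1-t) f_\lambda(1-t)\,dt = \|f_\lambda\|_R^2$ after substituting $t\mapsto 1-t$. Hence the weight of $\lambda$ in $\mu_{\textup{left},\tilde\btau}$ equals $f_\lambda(1)^t f_\lambda(1)/\|f_\lambda\|_R^2$, which is precisely the weight of $\lambda$ in $\mu_{\textup{right},\btau}$; symmetrically $\mu_{\textup{right},\tilde\btau} = \mu_{\textup{left},\btau}$. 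Since the spectra of $\btau$ and $\tilde\btau$ coincide, this gives the two displayed identities.

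The only genuine subtlety — and the step I'd be most careful with — is bookkeeping the signs and the transposes when moving $S$, $J$, $\rho$ past each other, in particular the identities $S J S = -J$, $S J = -J S$, $S^2 = I$, $S^t = S$, and the fact that $\det S = -1$ does not spoil Assumption~1 because $SRS$ has the same determinant as $R$; also confirming that the parametrization \eqref{eq:Rxy} of $SRS$ corresponds to $z\mapsto -\bar z$ on $x+iy$, which amounts to the elementary check $S X^t X S / (2\det X)$ matches $\tilde X^t\tilde X/(2\det\tilde X)$ for $\tilde X = \mat{1}{x}{0}{y}$. There is no analytic obstacle here: everything is an algebraic consequence of the definitions together with the substitution $t\mapsto 1-t$ in the defining integrals, exactly as the paper's remark ``simple consequence of the definitions'' indicates. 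If $\uu_0 \not\parallel \uu_1$ one should also note Assumption~3 is preserved, since $(S\uu_1)^t J (S\uu_0) = \uu_1^t S J S \uu_0 = -\uu_1^t J \uu_0 = \uu_0^t J \uu_1 = 1$ using $J^t = -J$, so the normalization convention is consistent with the $-$ signs in $\tilde\uu_0, \tilde\uu_1$.
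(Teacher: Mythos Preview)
Your proof is correct and is exactly the direct computation the paper alludes to; the paper does not actually write out a proof of this lemma, stating only that it is ``a simple consequence of the definitions.'' Your verification of $\tilde R = S\,R(1-\cdot)\,S$, the identification $SRS \leftrightarrow -\bar z$ via \eqref{eq:Rxy}, the boundary-condition swap $\tilde\uu_j = -S\uu_{1-j}$ (correctly flagging the typo in the statement), and the eigenfunction correspondence $\tilde f_\lambda = S f_\lambda(1-\cdot)$ giving the spectral-measure identities are all sound, including the Assumption~3 check.
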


\section{Biasing by the weight at zero and the Palm measure}\label{sec:biasing}

In this section we discuss the notion of conditioning a random measure to charge location $x$: ``biasing by the weight of $x$''. For  translation-invariant random measures, this coincides with the notion of the Palm measure. 
\begin{definition}\label{def:biased} Given a random measure $\mu$ on a metric space $S$, let $\mu_\eps$ have Radon-Nikodym derivative $$\frac{\mu(B_\eps(x))}{E\mu(B_\eps(x))}$$
with respect to $\mu$. That is, $\mu_\eps$ is $\mu$ biased by the weight of $B_\eps(x)$.
We define {\bf $\mu$ biased by the weight of  $x\in S$} as the $\eps\to 0$ distributional limit of  $\mu_\eps$ if it exists.
\end{definition}

\begin{remark}
If $\mu$ is a random probability measure, and $X$ is a random pick from $\mu$, then $\mu$ biased by the weight of $x$ can be interpreted as the posterior distribution of $\mu$ given $X=x$ in the Bayes sense. 
\end{remark}

Often a slightly more general version of Definition \ref{def:biased} is needed, where the metric space is {\bf decorated} with a random function $\xi:S\to R$, and arbitrary measure space. The {\bf decoration} $\xi$ seems unnecessary at first, but is a standard tool in Palm measure theory. It will be  useful in defining Palm measures for operators. 

\begin{definition}\label{def:biased1} Let $\mu$ be a random measure on a metric space $S$, and let $\xi$ be a random measurable function $S\to R$, where $R$ is a measure space. 
Let $(\mu_\eps,\xi_\eps)$ have Radon-Nikodym derivative $$\frac{\mu(B_\eps(x))}{E\mu(B_\eps(x))}$$
with respect to $(\mu,\xi)$. That is, $(\mu_\eps,\xi_\eps)$ is $(\mu,\xi)$ biased by the weight of $B_\eps(x)$.
We define {\bf $(\mu,\xi)$ biased by the weight of  $x\in S$} as the $\eps\to 0$ distributional limit of  $(\mu_\eps,\xi_\eps)$ if it exists.
\end{definition}

When $\xi$ is constant, it can be ignored and Definition \ref{def:biased1} is equivalent to   Definition \ref{def:biased}. \medskip

Next, we define the Palm measure of a decorated, translation invariant random measure with finite intensity.  
One could  ignore the decoration $\xi$ for the first reading, and think of it as almost surely constant. Even in the general case, it is good to think of $\xi$ as ``coming along for the ride''.

Let $\mu$ be a random measure on $\mathbb R$, and let $\xi$ be a random function from $\mathbb R$ to some measure space $R$. We assume that $(\mu,\xi)$ is translation invariant:
\[
(\mu(\cdot +t),\xi(\cdot+t))\ed(\mu,\xi), \qquad \text{for all }t\in \mathbb R, 
\]
and that for some positive $\rho$  \begin{equation}\label{e:mua}E\mu(A)=|A|\rho \qquad \text{for any Borel set $A\subset \mathbb R$.}
\end{equation}
The number $\rho$ is called the {\bf intensity} of $\mu$.

\begin{definition}\label{def:palm}
Let $A$ be a subset of $\mathbb R$ with positive and finite Lebesgue measure. Let $(\mu_A,\xi_A)$ be distributed as the biasing of $(\mu,\xi)$ by $\mu(A)$, namely for any measurable set $M$  we have 
$$
P((\mu_A,\xi_A)\in M)=\frac{E[\mu(A);\,(\mu,\xi)  \in M]}{E\mu(A)}
$$
Given $\mu_A$, let $X$ be a random point picked from the probability measure $\mu_A(A\cap \cdot)/\mu_A(A)$, the normalized version of $\mu_A$ restricted to $A$. Then the {\bf Palm measure} $(\mu_*,\xi_*)$ of $(\mu,\xi)$ is defined as the translation $(\mu_A,\xi_A)(\cdot-X)$. 
\end{definition}
When $\xi$ is constant, in words, the Palm measure is the re-centering of the $\mu(A)$-biased version $\mu_A$ of $\mu$ at a random point picked from the restriction of $\mu_A$ to $A$.

Definition \ref{def:palm} can be summarized in terms of bounded test functions $f$  as follows:
$$
Ef(\mu_*,\xi_*)=\frac{1}{E\mu(A)}E\int_A f(\mu(\cdot -x),\xi(\cdot-x)) \mu(dx).
$$
which agrees with the classical definition, see  \cite{Kallenberg}, Chapter 11, equation (1). There, in Lemma 11.2 it is shown that the definition does not depend on the set $A$. 

\begin{proposition} \label{prop:Palm} Let $(\mu,\xi)$ be translation-invariant decorated random measure on $\mathbb R$, and assume that $E\mu\{[0,1]\}=\rho<\infty$.  Then $(\mu,\xi)$ biased by the weight of $0$ exists and equals the Palm measure of $(\mu,\xi)$. 
\end{proposition}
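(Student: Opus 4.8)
The plan is to show that the $\eps\to 0$ distributional limit in Definition \ref{def:biased1} (biasing by the weight of $B_\eps(0)=(-\eps,\eps)$) exists and is given by the Palm measure of Definition \ref{def:palm}, and that the two are equal. Since Definition \ref{def:palm} is known to be independent of the choice of the set $A$ (see \cite{Kallenberg}, Lemma 11.2), I am free to take $A=(-\eps,\eps)$ for each fixed $\eps>0$. The first step is to observe that, by Definition \ref{def:palm} with $A=A_\eps:=(-\eps,\eps)$, the Palm measure $(\mu_*,\xi_*)$ is obtained by biasing $(\mu,\xi)$ by $\mu(A_\eps)$ to get $(\mu_{A_\eps},\xi_{A_\eps})$ and then re-centering at $X$, a $\mu_{A_\eps}$-random point in $A_\eps$. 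The key point is that this construction produces the \emph{same} law $(\mu_*,\xi_*)$ for every $\eps$.

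The second step is to compare $(\mu_{A_\eps},\xi_{A_\eps})(\cdot - X)$ with $(\mu_\eps,\xi_\eps)$, where the latter is the $\eps$-biased decorated measure of Definition \ref{def:biased1}, i.e. $(\mu,\xi)$ itself biased by $\mu(A_\eps)/(E\mu(A_\eps))$ with \emph{no} re-centering. These differ only by the random translation by $X\in A_\eps$, and $|X|<\eps$. Concretely, using the test-function form of the Palm measure,
\[
Ef(\mu_*,\xi_*)=\frac{1}{E\mu(A_\eps)}E\int_{A_\eps} f(\mu(\cdot-x),\xi(\cdot-x))\,\mu(dx),
\]
while
\[
Ef(\mu_\eps,\xi_\eps)=\frac{1}{E\mu(A_\eps)}E\bigl[\mu(A_\eps)\,f(\mu,\xi)\bigr]=\frac{1}{E\mu(A_\eps)}E\int_{A_\eps} f(\mu,\xi)\,\mu(dx).
\]
So the difference is controlled by $E\int_{A_\eps}|f(\mu(\cdot-x),\xi(\cdot-x))-f(\mu,\xi)|\,\mu(dx)/E\mu(A_\eps)$, an average over $|x|<\eps$ of the displacement of $f$ under a small translation. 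For $f$ bounded and continuous in the vague (or appropriate) topology, the integrand tends to $0$ pointwise as $\eps\to 0$, and by dominated convergence (using boundedness of $f$) the normalized integral tends to $0$. This shows $(\mu_\eps,\xi_\eps)\Rightarrow(\mu_*,\xi_*)$, which is exactly the assertion that $(\mu,\xi)$ biased by the weight of $0$ exists and equals the Palm measure.

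The main obstacle is the interchange of limit and expectation in the displacement estimate: one needs to be careful that test functions $f$ separating laws in the relevant topology are indeed (uniformly) continuous under translation $\mu\mapsto\mu(\cdot-x)$, $\xi\mapsto\xi(\cdot-x)$ as $x\to0$, and that the family $\{f(\mu(\cdot-x),\xi(\cdot-x)):|x|<\eps\}$ can be dominated so that the averaged expectation vanishes. For a decorated random measure with finite intensity $\rho$ this follows from standard arguments: test functions may be taken in a convergence-determining class consisting of functionals that are continuous along translations off a null set, and $E\mu(A_\eps)=2\eps\rho<\infty$ gives the normalization. Once this continuity-along-translations point is pinned down, the rest is the routine comparison above, and the independence-of-$A$ fact from \cite{Kallenberg} does the remaining work by identifying the limit as the Palm measure rather than merely as \emph{some} limiting law.
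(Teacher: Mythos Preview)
Your proposal is correct and takes essentially the same approach as the paper: both use the $A$-independence of the Palm construction to set $A=B_\eps(0)$, observe that $(\mu_\eps,\xi_\eps)$ differs from the Palm measure $(\mu_*,\xi_*)$ only by a translation $X_\eps$ with $|X_\eps|<\eps$, and let $\eps\to 0$. The paper's execution is slightly cleaner---it inverts to write $(\mu_\eps,\xi_\eps)=(\mu_*,\xi_*)(\cdot+X_\eps)$ and then applies Slutsky (fixed law of $(\mu_*,\xi_*)$, $X_\eps\to 0$) plus the continuous mapping theorem, which sidesteps the dominated-convergence bookkeeping you flag as the main obstacle.
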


\begin{proof}[Proof of Proposition \ref{prop:Palm}]
By Definition \ref{def:biased1}, for each $\eps>0$ the Palm measure is given by 
$\nu=\mu_\eps(\cdot-X_\eps)$ where $X_\eps$ is picked from a probability measure supported on $B_\eps(0)$. Equivalently, 
$$
\mu_\eps=\nu(\cdot+X_\eps).
$$
Since $\nu$ does not depend on $\eps$, and $X_\eps$ converges in law to $0$, the pair $(\nu,X_\eps)$ converges in law to $
(\nu,0)$. By the continuity theorem, $\mu_\eps=\nu(\cdot+X_\eps)$ converges to $\nu(\cdot+0)=\nu$.
\end{proof}

\begin{remark}\label{rem:circle}
In Definition \ref{def:palm} we defined the Palm measure for decorated translation invariant random measures on $\R$ with a finite intensity. The same setup (with the appropriate modifications) works for rotation invariant random measures on the unit circle with a finite expected total weight. In particular, Proposition \ref{prop:Palm} holds for such random measures.
\end{remark}

Let $\mathbb S=[0,2\pi)$ with the topology of the circle, and let $\{\cdot \}_{2\pi}$ be the standard covering map $\mathbb R\to \mathbb S$. In other words, $\{x \}_{2\pi}=x-2\pi \lfloor x/(2\pi)\rfloor$, the single element of   $[0,2\pi)\cap (x+2\pi \ZZ)$.

\begin{lemma}\label{lem:pullback}
Let $\kappa>0$ and $\lambda$ be a differentiable random bijection $\R\to \R$, and set $V=\{\lambda^{-1}(0)\}_{2\pi}$. Let $\gamma$ be a random variable taking values in a Polish space $\mathcal S$.
For $u\in \mathbb R$, consider the measure 
\begin{align}\label{def:Xi}
\Xi_u=\kappa \sum_{w\in 2\pi \ZZ+u} \lambda'(w)\delta_{\lambda(w)}.
\end{align}
Let $U$ be independent of $(\lambda,\gamma)$, and assume that the law of $\{U\}_{2\pi}$ is absolutely continuous with bounded density $f$ on $\mathbb S$. Moreover, assume that on the support of the law of the random variable $V$ the density $f$ is positive and  continuous on $\mathbb S$. 

Then $(\Xi_U,(U,\gamma))$ biased by the weight of zero exists and has the law of $(\Xi_{V},(V,\gamma))$ biased by $f(V)$. In particular, if $U$ is uniform on $\mathbb S$ or $\lambda(0)=0$ a.s., then $(\Xi_U,(U,\gamma))$ biased by the weight of zero has the law of $(\Xi_{V},(V,\gamma))$. 
\end{lemma}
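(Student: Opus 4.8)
The plan is to verify Definition~\ref{def:biased1} directly: compute the $\eps\to0$ distributional limit of $(\Xi_U,(U,\gamma))$ biased by the weight $\Xi_U((-\eps,\eps))$ of the ball $B_\eps(0)=(-\eps,\eps)$, by conditioning on $(\lambda,\gamma)$ and unfolding the periodic sum defining $\Xi_U$ into an integral over $\R$. Write $w_0=\lambda^{-1}(0)$ and $I_\eps=\lambda^{-1}((-\eps,\eps))$, and assume $\lambda$ is increasing (as it is in the intended applications; the general case needs only the replacement of $\lambda'$ by $|\lambda'|$). Then $I_\eps$ is an interval shrinking to $\{w_0\}$ with $|I_\eps|\sim 2\eps/\lambda'(w_0)$, and for small $\eps$ the measure $\Xi_u$ can charge $(-\eps,\eps)$ only through a single atom, the one at $\lambda(w)$ for the (at most one) $w\in(2\pi\Z+u)\cap I_\eps$, with weight $\kappa\lambda'(w)$; equivalently $\Xi_u((-\eps,\eps))=\kappa\sum_{w\in 2\pi\Z+u}\lambda'(w)\ind(w\in I_\eps)$.

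First I would fix a bounded continuous test functional $g$ on pairs (locally finite measure, decoration). Since $\Xi_u$ and the decoration depend on $U$ only through $\{U\}_{2\pi}$, I may treat $U$ as a variable on $\mathbb S$ with density $f$; conditioning on $(\lambda,\gamma)$ (which is independent of $U$) and unfolding the sum over $2\pi\Z+u$ against $\int_0^{2\pi}du$ into an integral over $w\in\R$, using $\ind(\lambda(w)\in(-\eps,\eps))=\ind(w\in I_\eps)$, gives
\[
E\!\left[g(\Xi_U,(U,\gamma))\,\Xi_U((-\eps,\eps))\,\middle|\,\lambda,\gamma\right]=\kappa\int_{I_\eps}g\!\left(\Xi_{\{w\}_{2\pi}},(\{w\}_{2\pi},\gamma)\right)\lambda'(w)\,f(\{w\}_{2\pi})\,dw.
\]
Next I would divide by $2\eps$ and let $\eps\to0$; this is the heart of the argument. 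The identity $\frac1{2\eps}\int_{I_\eps}\lambda'(w)\,dw=\frac1{2\eps}\bigl(\lambda(\sup I_\eps)-\lambda(\inf I_\eps)\bigr)=1$ does double duty: it bounds the normalized right-hand side by $\|g\|_\infty\|f\|_\infty$ uniformly in $\eps$ and $(\lambda,\gamma)$, and it exhibits $\lambda'(w)\,dw/(2\eps)$ as a probability measure on $I_\eps$ concentrating at $w_0$. Using continuity of $u\mapsto\Xi_u$ in the vague topology (for a fixed compactly supported $\phi$, $\int\phi\,d\Xi_u$ is a locally finite sum of continuous functions of $u$), continuity of $g$ and of $\lambda'$, and continuity and positivity of $f$ at $V=\{w_0\}_{2\pi}$ (which hold because the realized $V$ lies in the support of the law of $V$, on which $f$ was assumed positive and continuous), the normalized integral converges pointwise in $(\lambda,\gamma)$ to $\kappa\,g(\Xi_V,(V,\gamma))\,f(V)$. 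Dominated convergence then yields
\[
\tfrac1{2\eps}\,E\!\left[g(\Xi_U,(U,\gamma))\,\Xi_U((-\eps,\eps))\right]\longrightarrow\kappa\,E\!\left[g(\Xi_V,(V,\gamma))\,f(V)\right].
\]

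Taking $g\equiv1$ identifies the normalizing constant, $\tfrac1{2\eps}E[\Xi_U((-\eps,\eps))]\to\kappa\,E[f(V)]$, which is finite (as $f$ is bounded) and strictly positive (as $f>0$ on the support of $V$). Hence the $\eps$-biased pairs converge in law, and their limit --- the object of Definition~\ref{def:biased1} --- satisfies $E\,g=E[g(\Xi_V,(V,\gamma))\,f(V)]/E[f(V)]$ for every bounded continuous $g$; that is, it is the law of $(\Xi_V,(V,\gamma))$ biased by $f(V)$, which is the first assertion. For the two special cases: if $U$ is uniform on $\mathbb S$ then $f\equiv 1/(2\pi)$ is constant, and if $\lambda(0)=0$ almost surely then $w_0=0$ and $V=0$ a.s., so $f(V)=f(0)$ is a deterministic constant; in either case biasing by $f(V)$ has no effect.

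I expect the bulk of the work to be topological bookkeeping rather than any single hard estimate: verifying the vague continuity of $u\mapsto\Xi_u$, and hence that $g\bigl(\Xi_{\{w\}_{2\pi}},(\{w\}_{2\pi},\gamma)\bigr)\to g(\Xi_V,(V,\gamma))$ as $w\to w_0$ --- this is exactly where the hypothesis that $f$ is positive and continuous on the support of the law of $V$ is used, ensuring both that the limit is evaluated at a ``good'' decoration and that $E[f(V)]>0$ --- together with checking that the fundamental theorem of calculus applies to give $\frac1{2\eps}\int_{I_\eps}\lambda'=1$ (valid whenever $\lambda$ is $C^1$, or merely locally absolutely continuous, as for the phase functions to which the lemma will be applied), this identity being what simultaneously supplies the dominating constant and pins down the correct Jacobian.
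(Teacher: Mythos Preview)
Your proof is correct and follows essentially the same route as the paper's: unfold the periodic sum defining $\Xi_U((-\eps,\eps))$ into an integral over $\R$, use the change-of-variables identity $\int_{I_\eps}\lambda'=2\eps$ (the paper writes this as the substitution $x=\lambda(u)$, turning the integral into $\int_{-\eps}^{\eps}dx$), and apply bounded/dominated convergence. The paper's only variation is to test against bounded continuous functions of $(\lambda,\gamma,\{U\}_{2\pi})$ rather than of $(\Xi_U,(U,\gamma))$, which sidesteps the vague-continuity check for $u\mapsto\Xi_u$ but is otherwise the same computation.
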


The variable $\gamma$ is just coming along for the ride,  it is not essential in the proof of the lemma. However,  we will use this more general statement in Proposition \ref{prop:Dir_Spec_weight} below. 

\begin{proof}  By definition, multiplication by a constant $\kappa$ commutes with biasing, so it suffices to show the claim for $\kappa=1$. 

Let $A_\eps=[-\eps,\eps]$ and $\eps\to 0$. It suffices to show that the joint law of $(\lambda,\gamma, \{U\}_{2\pi})$ biased by $\Xi_U(A_\eps)$ converges to the law of $(\lambda,\gamma, V)$ biased by $f(V)$. 
Now let
$\varphi$ be a bounded continuous test function. We will show that as $\eps\to 0$
\begin{equation}\label{eq:a-cont}
\frac1{E[\Xi_U[A_\eps]]}
{E[\varphi(\lambda, \gamma,\{U\}_{2\pi}) \Xi_U[A_\eps]]}\to \frac{1}{E[f(V)]}E[\varphi(\lambda, \gamma, V) f(V)].
\end{equation}
We have 
\begin{align*}
{E[\varphi(\lambda, \gamma,\{U\}_{2\pi}) \Xi_U[A_\eps]]}&=
E[\varphi(\lambda, \gamma,\{U\}_{2\pi}) \sum_{w\in 2\pi \Z+U} \lambda'(w) 1_{A_\eps}(\lambda(w))]\\
&=E\int_{\R} \varphi(\lambda, \gamma,\{ u \}_{2\pi}) \lambda'(u) 1_{A_\eps}(\lambda(u)) f(\{u\}_{2\pi}) du.
\end{align*}
By a change of variables, this equals
\begin{align*}
E\int_{-\eps}^{\eps} \varphi(\lambda, \gamma,\{ \lambda^{-1}(x) \}_{2\pi})  f(\{\lambda^{-1}(x)\}_{2\pi}) dx
&=2\eps E \left[\varphi(\lambda, \gamma,\{ \lambda^{-1}(X_\eps) \}_{2\pi})  f(\{\lambda^{-1}(X_\eps)\}_{2\pi}) \right]
\end{align*}
where $X_\eps$ is uniform on $(-\eps,\eps)$ and independent of $\lambda$.


As $\eps\to 0$ we have $\{\lambda^{-1}(X_\eps)\}_{2\pi}\to V$ in probability. By the bounded convergence theorem and the assumptions on $f$ we get 
\[
\frac{1}{2\eps} E[\varphi(\lambda, \gamma,\{U\}_{2\pi}) \Xi_U[A_\eps]]\to E \left[\varphi(\lambda, \gamma,V)  f(V) \right].
\]
This convergence (together with the case $\varphi=1$) now 
implies \eqref{eq:a-cont}, and the statement of the lemma follows.
\end{proof}

\subsection*{Biasing and Dirac operators}

\begin{proposition}\label{prop:Dir_Spec_weight}
Suppose that $\cI$ is closed on the right, and  $R$, $\uu_0=[1,0]^t$ satisfy Assumptions 1 and 2 on $\cI$. For an $r\in \R\cup\{\infty\}$ we denote by $\mu_{R,r}$  the right spectral measure of the  Dirac operator $\btau_r=\Dirop(R_{\cdot}, \infty, r)$ on $\cI$.

Let $q$ be a random variable on $\R\cup\{\infty\}$ that is independent of $R$, and for which the law of the angle $2\operatorname{arccot}(q)\in \mathbb S$ has bounded density which is continuous and positive at $0$. 
Then $(\mu_{R,q},(R,q))$ biased by the weight of 0 has law given by $(\mu_{R,\infty},(R,\infty))$.
\end{proposition}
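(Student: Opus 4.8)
The plan is to reduce the statement to Lemma~\ref{lem:pullback}. The point is that the right spectral measure of $\btau_q=\Dirop(R_\cdot,\infty,q)$ is exactly the lattice pushforward $\Xi_u$ of that lemma, with the random bijection there being the inverse of the phase function of $R$, with $\kappa=2$, and with the boundary parameter $q$ encoded in the shift $u$ of the lattice $2\pi\ZZ$; then the assertion is precisely the conclusion of Lemma~\ref{lem:pullback}, carrying the path $R$ (and, after a relabeling, $q$) as the decoration ``coming along for the ride''.

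First I would set up this identification. Let $\alpha(t,\lambda)$ be the phase function of $R$ from Definition~\ref{def:phase}. Since the entries $A(\lambda),B(\lambda)$ of $H(1,\lambda)$ are entire, real on $\R$, and never simultaneously zero, the map $\lambda\mapsto\alpha(1,\lambda)=2\Im\log(A(\lambda)-iB(\lambda))$ is real-analytic on $\R$, with $\alpha(1,0)=0$ and $\partial_\lambda\alpha(1,\lambda)=2(A'B-AB')/(A^2+B^2)>0$ by \eqref{eq:H_L2}; it is classical (see \cite{BVBV_sbo}) that it is in fact a bijection of $\R$. Let $\ell:=\alpha(1,\cdot)^{-1}$, a random differentiable bijection of $\R$ with $\ell(0)=0$. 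Realizing the right boundary condition $q\in\R$ by $\uu_1=[-q,-1]^t$ and the condition $\infty$ by $\uu_1=[1,0]^t$, Lemma~\ref{lem:phase_spectral1} gives, pathwise in $(R,q)$,
\[
\mu_{R,q}=2\sum_{k\in\ZZ}\ell'(2\pi k+u)\,\delta_{\ell(2\pi k+u)},\qquad u=2\operatorname{arccot}(-q),\qquad \mu_{R,\infty}=2\sum_{k\in\ZZ}\ell'(2\pi k)\,\delta_{\ell(2\pi k)}.
\]
In the notation of Lemma~\ref{lem:pullback}, $\mu_{R,q}=\Xi_u$ and $\mu_{R,\infty}=\Xi_0$, with $\kappa=2$ and bijection $\ell$.

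Next I would verify the hypotheses of Lemma~\ref{lem:pullback} and apply it. Take $\gamma:=R$, a random element of the Polish space of admissible paths (topologized via $x_\cdot+iy_\cdot$ locally uniformly, or via the data in Lemma~\ref{lem:spec_convergence}), and $U:=u=2\operatorname{arccot}(-q)\in\mathbb{S}$; since $q\perp R$ we have $U\perp(\ell,\gamma)$. A bounded density for $\Theta:=2\operatorname{arccot}(q)$ forces $P(q=\infty)=0$, and for $q\in\R$ the identity $\operatorname{arccot}(-q)=\pi-\operatorname{arccot}(q)$ gives $\{U\}_{2\pi}=\{-\Theta\}_{2\pi}$, so $\{U\}_{2\pi}$ has the bounded density $f(\theta)=g(\{-\theta\}_{2\pi})$, where $g$ is the density of $\Theta$; in particular $f$ is positive and continuous at $0$ because $g$ is. Since $V=\{\ell^{-1}(0)\}_{2\pi}=\{\alpha(1,0)\}_{2\pi}=0$ is deterministic, its law is supported on $\{0\}$, so all hypotheses of Lemma~\ref{lem:pullback} hold, and moreover $\ell(0)=0$ a.s. Thus Lemma~\ref{lem:pullback} yields that $(\Xi_U,(U,\gamma))$ biased by the weight of $0$ exists and has the law of $(\Xi_V,(V,\gamma))=(\Xi_0,(0,R))$; that is, $(\mu_{R,q},(U,R))$ biased by the weight of $0$ equals in law $(\mu_{R,\infty},(0,R))$. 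Finally, $\Phi\colon q\mapsto\{2\operatorname{arccot}(-q)\}_{2\pi}$ (with $\Phi(\infty):=0$) is a continuous bijection from the compact $\R\cup\{\infty\}$ onto the Hausdorff $\mathbb{S}$, hence a homeomorphism, and $\Phi(\infty)=0$. Relabeling the angle coordinate of the decoration by $\Phi^{-1}$ --- which does not touch the measure, so commutes with biasing and passes to the $\eps\to0$ limit by the continuous mapping theorem --- turns $(U,R)$ into $(q,R)$ and $(0,R)$ into $(\infty,R)$, giving the claim: $(\mu_{R,q},(R,q))$ biased by the weight of $0$ has law $(\mu_{R,\infty},(R,\infty))$.

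The substantive content is entirely in Lemmas~\ref{lem:phase_spectral1} and~\ref{lem:pullback}, so what is left is bookkeeping. The step that needs genuine care is the dictionary between the right boundary parameter $q$ and the shift $u$ of the $2\pi\ZZ$-lattice --- in particular its behaviour as $q\to\infty$, where $\btau_q\to\btau_\infty$ while $u\to 0$ in $\mathbb{S}$ --- together with the check that the regularity imposed on the law of $2\operatorname{arccot}(q)$ is exactly what Lemma~\ref{lem:pullback} requires of $\{U\}_{2\pi}$ near the deterministic point $V=0$. Once this is settled the proof is immediate.
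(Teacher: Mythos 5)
Your proof follows the paper's own route exactly: identify $\mu_{R,q}$ with the lattice measure $\Xi_u$ of Lemma~\ref{lem:pullback} via the phase function and Lemma~\ref{lem:phase_spectral1}, then apply Lemma~\ref{lem:pullback} with $V=0$ deterministic. In fact you handle the decoration more carefully than the paper does --- the paper applies Lemma~\ref{lem:pullback} ``with $\gamma=(R,q)$'', which strictly speaking violates the lemma's hypothesis that $U$ be independent of $\gamma$ (here $U$ is a deterministic function of $q$); your choice $\gamma=R$ followed by the homeomorphism relabeling $q\leftrightarrow\{2\operatorname{arccot}(-q)\}_{2\pi}$ is the clean way to get the stated conclusion for $(\mu_{R,q},(R,q))$.
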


In words, biasing the operator $\btau_q$ by the weight of zero in the right spectral measure turns the random right boundary condition $q$ into  $\infty$. 

\begin{proof}
Consider the phase function $\alpha$ for the operator $\btau_r$ introduced in \eqref{eq:alpha},  this does not depend on the right boundary condition $r$.  Let $\lambda:\R\to \R$ be the inverse of the function $\alpha(1,\cdot)$, note that $\lambda^{-1}(0)=0$.

By Lemma \ref{lem:phase_spectral1} the spectral measure $\mu_{R, q}$ is defined as \eqref{def:Xi}
with $\kappa=2$ and $u=-2\operatorname{arccot}(q)\in (-\pi,\pi]$.
 We can now apply Lemma  \ref{lem:pullback} with $\gamma=(R,q)$. Using Lemma \ref{lem:phase_spectral1} again we identify $\Xi_V=\Xi_0$ as $\mu_{R,\infty}$.
\end{proof}

Proposition \ref{prop:Dir_Spec_weight} applied to $\btau_\beta$ leads to the following corollary. 

\begin{corollary}\label{cor:Sine_biased}
Consider $R, q$ and $\btau_\beta$ introduced in \eqref{eq:xy}-\eqref{tau_0}. For $r\in \R$ let  $\btau_{\beta,r}=\Dirop(R_{\cdot},\infty,r)$, and let  $\mu=\mu_{\textup{right},\btau_\beta}$ and $\mu_r=\mu_{\textup{right},\btau_{\beta,r}}$. Then  
the  $(\mu, (R,q))$ biased by the weight of 0 in $\mu$ has the same distribution as  $(\mu_\infty, (R,\infty))$.  In particular, $\mu$ biased by the weight of zero is $\mu_\infty$, the right spectral measure of $\Dirop(R_{\cdot},\infty,\infty)$.
\end{corollary}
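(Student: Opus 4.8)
The plan is to deduce Corollary \ref{cor:Sine_biased} as a direct application of Proposition \ref{prop:Dir_Spec_weight} with the specific choice $R$ as in \eqref{eq:R_x_y}, built from the Brownian paths $b_1, b_2$, and with $q$ Cauchy distributed. First I would check that this $R$ satisfies Assumptions 1 and 2 on $\cI = (0,1]$: Assumption 1 holds since $\det X = y > 0$ gives $\det R = 1/4$ and the local boundedness of $\|R\|, \|R^{-1}\|$ follows from the continuity and positivity of $y$ on compacts; Assumption 2 on the left-open end with $\uu_0 = [1,0]^t$ is exactly the content of the convergence built into the construction of $\btau_\beta$ in \cite{BVBV_sbo}, so I would cite that. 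Then I would verify the hypothesis on $q$: the law of $2\operatorname{arccot}(q)\in\mathbb S$ when $q$ is standard Cauchy is the \emph{uniform} distribution on $\mathbb S$ (this is the standard fact that $\operatorname{arccot}$ of a Cauchy variable is uniform), which certainly has bounded density that is continuous and positive at $0$.

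With these verifications in place, Proposition \ref{prop:Dir_Spec_weight} applies verbatim with $r \mapsto \btau_{\beta,r} = \Dirop(R_\cdot, \infty, r)$ and yields that $(\mu_{R,q},(R,q))$ biased by the weight of $0$ has the law of $(\mu_{R,\infty},(R,\infty))$. Translating the notation: $\mu_{R,q}$ is by definition $\mu_{\textup{right},\btau_\beta} = \mu$ (since $\btau_\beta$ has right boundary condition $[-q,-1]^t$, i.e.\ the point $q\in\R\cup\{\infty\}$ in the $\mathcal P$-identification), and $\mu_{R,\infty} = \mu_{\textup{right},\btau_{\beta,\infty}} = \mu_\infty$, the right spectral measure of $\Dirop(R_\cdot,\infty,\infty)$. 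This gives the joint statement $(\mu,(R,q))$ biased by the weight of $0$ equals $(\mu_\infty,(R,\infty))$ in law; the ``in particular'' clause about $\mu$ alone follows by projecting onto the first coordinate (marginalizing out the decoration), since biasing commutes with taking marginals.

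The one genuinely substantive point, rather than an obstacle, is making sure the identification of boundary data is consistent: the operator $\btau_\beta$ in \eqref{tau_0} has right boundary vector $[-q,-1]^t$, and I need this to correspond to the parameter $q$ in the $r$-family $\Dirop(R_\cdot,\infty,r)$ of Proposition \ref{prop:Dir_Spec_weight}. Under the projection $\mathcal P$, we have $\mathcal P[-q,-1]^t = q$, and in Lemma \ref{lem:phase_spectral1} the boundary condition $\uu_1 = [-q,-1]$ produces the shift $u$ with $\cot(u/2) = -q$, matching $u = -2\operatorname{arccot}(q)$ used in the proof of Proposition \ref{prop:Dir_Spec_weight}; so the conventions align and no sign or parametrization mismatch arises. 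I would spell this out in a sentence. Everything else is bookkeeping: the corollary is essentially a restatement of the proposition in the notation of the $\Sineb$ operator, so the proof is short.

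\begin{proof}[Proof of Corollary \ref{cor:Sine_biased}]
The function $R$ of \eqref{eq:R_x_y} built from $b_1, b_2$ satisfies Assumptions 1 and 2 on $\cI=(0,1]$ with $\uu_0=[1,0]^t$: Assumption 1 is immediate from $\det X = y > 0$ and the local boundedness of $y, y^{-1}$ on compacts, and Assumption 2 at the left-open endpoint is established in \cite{BVBV_sbo}. When $q$ is standard Cauchy, $2\operatorname{arccot}(q)$ is uniform on $\mathbb S$, so its density is bounded, continuous and positive at $0$. Thus the hypotheses of Proposition \ref{prop:Dir_Spec_weight} hold, and applying it to this $R$ and $q$ shows that $(\mu_{R,q},(R,q))$ biased by the weight of $0$ has law $(\mu_{R,\infty},(R,\infty))$. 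Since $\mathcal P[-q,-1]^t = q$, the operator $\btau_\beta$ of \eqref{tau_0} equals $\Dirop(R_\cdot,\infty,q)$, so $\mu = \mu_{\textup{right},\btau_\beta} = \mu_{R,q}$ and $\mu_r = \mu_{R,r}$; in particular $\mu_\infty = \mu_{R,\infty}$ is the right spectral measure of $\Dirop(R_\cdot,\infty,\infty)$. This is the asserted joint statement, and the claim for $\mu$ alone follows by marginalizing out the decoration, which commutes with biasing.
\end{proof}
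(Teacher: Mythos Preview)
Your proof is correct and follows exactly the approach the paper intends: the corollary is stated immediately after Proposition \ref{prop:Dir_Spec_weight} with the remark that it is an application of that proposition to $\btau_\beta$, and you have supplied precisely the hypothesis checks (Assumptions 1--2 for the random $R$, the uniformity of $2\operatorname{arccot}(q)$ on $\mathbb S$ when $q$ is Cauchy, and the boundary-condition bookkeeping) that make this application go through. The paper itself gives no further argument, so your write-up is simply a fleshed-out version of the one-line deduction.
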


\section{Finite support measures on the unit circle}
\label{sec:finite}

In \cite{BVBV_sbo} it was shown that one can associate a Dirac operator to a finitely supported probability measure on the unit circle. The construction relies on the theory of orthogonal polynomials \cite{OPUC}, see \cite{SimonOPUC1foot} for a shorter summary. 

Let $\nu$ be a probability measure whose support is exactly $n$ points $e^{i\lambda_j}, 1\le j \le n$ on the unit circle $\partial \UU$. Denote the Gram-Schmidt ortogonalization of the polynomials $1,z, \cdots, z^n$ with respect to $\nu$  by $\Phi_k, k=0, \dots, n$. These are the orthogonal polynomials with respect to $\nu$, with $\Phi_n(z)=\prod_{j=1}^n(z-e^{i \lambda_j})$. Together with the reversed polynomials  $\Phi_k^*(z)=z^k \overline{\Phi_k(1/\bar z)}$, they satisfy the famous Szeg\H o recursion (see e.g.~Section 1.5, volume 1 of \cite{OPUC}):
\begin{align}\label{eq:Szego1}
\binom{\Phi_{k+1}}{\Phi_{k+1}^*}=
\mat{1}{-\bar \alpha_k}{-\alpha_k}{1} \mat{z}{0}{0}{1} \binom{\Phi_{k}}{\Phi_{k}^*}, \qquad \binom{\Phi_0}{\Phi_0^*}
=\binom{1}{1},\qquad 0\le k\le n-1.
\end{align}
Here $\alpha_k$, $0\le k\le n-1$ are the Verblunsky coefficients, they satisfy $|\alpha_k|<1$ for $0\le k\le n-2$ and $|\alpha_{n-1}|=1$. The map between the probability measures supported on $n$ points on $\partial \UU$ and the  Verblunsky coefficients $\alpha_0, \dots, \alpha_{n-1}$ is invertible, and both the map and its inverse are continuous.

We  denote by $\mathcal{U}(z):=\frac{z-i}{z+i}$ the Cayley transform mapping  $\overline \HH$ to $ \overline \UU$, and introduce its matrix version
\begin{equation}\label{eq:U}
U=\mat{1}{-i}{1}{i}.
\end{equation}

The following definition constructs a path and a Dirac operator from a probability measure supported on finitely many points on $\partial \UU$.

\begin{definition}\label{def:path} 
Let $\nu$ be a probability measure supported on $n$ distinct points  on the unit circle, and let $\alpha_k, 0\le k\le n-1$ be its Verblunsky coefficients. Define  $b_k, $,  $0\le k\le n$ as 
\begin{align}\label{eq:discrete_path_b}
b_0&=0, \quad b_k=\mathcal P \mat{1}{\bar \alpha_0}{\alpha_0}{1}\cdots \mat{1}{\bar \alpha_{k-1}}{\alpha_{k-1}}{1} \binom{0}{1},
\qquad 1\le k\le n, 
\end{align}
and $z_k=\mathcal{U}^{-1} (b_k)$, $0\le k\le n$.

Let  $z(t)=z_{\lfloor nt\rfloor}$, $\uu_0=[1,0]^t$ and $\uu_1=[-z_n, -1]$ if $z_n \in \R$ and $\uu_1=[1,0]$ if $z_n=\infty$. We call $z_0, \dots, z_n$  the {\bf path parameter} of $\nu$ in $\HH$, and $\btau=\Dirop(z(\cdot), \uu_0, \uu_1)$ the {\bf Dirac operator corresponding to $\nu$}. We call $b_0, \dots, b_n$ the path parameter of $\nu$ in $\UU$. 
\end{definition}

The following result is proved in Proposition 16 of \cite{BVBV_sbo}.

\begin{proposition}
\label{prop:discrete_op_1}
Let $\nu$ be a probability measure supported on $n$ distinct points $e^{i \lambda_j}, 1\le j\le n$ on the unit circle $\{|z|=1\}$. Let $z_k, 0\le k\le n$ be the path parameter of $\nu$ in $\HH$ introduced in Definition \ref{def:path}, and let $\btau=\Dirop(z(\cdot), \uu_0, \uu_1)$ be the Dirac operator corresponding to $\nu$. 

Then $\btau$ satisfies Assumptions 1 and 2 on $\cI=[0,1]$, and also Assumption 3 if $z_n\neq \infty$. Moreover, the spectrum of $\btau$ is the set $\{n \lambda_j+2\pi n k: 1\le j\le n, k\in \ZZ\}$. The spectrum contains 0 if and only if $z_n=\infty$. 
\end{proposition}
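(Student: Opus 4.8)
\emph{Proof plan.} The plan is to exploit that the path parameter of $\nu$ is piecewise constant: the canonical system \eqref{eq:H} then collapses to a finite product of transfer matrices, which one identifies with a Cayley transform of the Szeg\H o recursion \eqref{eq:Szego1}. On $\big[\tfrac kn,\tfrac{k+1}n\big)$ one has $R\equiv R_k=\tfrac{X_k^tX_k}{2\det X_k}$ with $z_k=x_k+iy_k$, and, using $J^{-1}=J^t=-J$, a direct check gives $\det R_k=\tfrac14$, $R_k$ positive definite, and $(J^tR_k)^2=-\tfrac14 I$. Hence the solution of \eqref{eq:H} with $H(0,z)=\uu_0$ is
\[
H(1,z)=M_{n-1}(z)\cdots M_0(z)\,\uu_0,\qquad M_k(z)=\exp\!\big(\tfrac{z}{n}J^tR_k\big)=\cos\!\big(\tfrac{z}{2n}\big)I+2\sin\!\big(\tfrac{z}{2n}\big)J^tR_k ,
\]
with each $M_k(z)$ entire in $z$ and $\det M_k(z)=1$. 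Because there are finitely many constant, positive-definite pieces of bounded norm ($z_0=\mathcal U^{-1}(0)=i$, so $R_0=\tfrac12 I$, and $z_k\in\HH$ for $k\le n-1$), Assumptions 1 and 2 hold on $[0,1]$ (or $[0,1)$ when $z_n=\infty$, a single endpoint being irrelevant to the operator); when $z_n\in\R$ one has $\uu_0^tJ\uu_1=1$, so Assumption 3 holds, and when $z_n=\infty$ the vector $\uu_1=[1,0]^t$ is parallel to $\uu_0$ and Assumption 3 does not apply.

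The crux is to identify $M_{n-1}(z)\cdots M_0(z)$ with a Szeg\H o transfer matrix. I would conjugate by the Cayley matrix $U$ of \eqref{eq:U}, which carries hyperbolic rotations of $\HH$ to rotations of $\UU$; for the first (trivial) step this gives, as a sanity check,
\[
U^{-1}M_0(z)\,U=\mat{\cos\theta}{i\sin\theta}{i\sin\theta}{\cos\theta}=e^{-i\theta}\,V\mat{e^{iz/n}}{0}{0}{1}V^{-1},\qquad \theta=\tfrac{z}{2n},\quad V=\mat{1}{1}{1}{-1}.
\]
In general, with $b_k=\mathcal U(z_k)$, one telescopes the defining products \eqref{eq:discrete_path_b} through the conjugation and shows that $U^{-1}\big(M_{n-1}(z)\cdots M_0(z)\big)U$ equals $e^{-iz/2}$ --- which is exactly the determinant normalisation $w^{-n/2}$, $w=e^{iz/n}$ --- times $\big(\prod_k\rho_k\big)^{-1}$ with $\rho_k=(1-|\alpha_k|^2)^{1/2}$, times a fixed conjugate of the full Szeg\H o transfer matrix of \eqref{eq:Szego1} evaluated at $w$, with the sign convention matched to \eqref{eq:discrete_path_b}. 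Applied to $\binom{1}{1}=\binom{\Phi_0}{\Phi_0^*}$, this turns $H(1,z)$, after conjugation, into $c\,e^{-iz/2}$ times $\big(\Phi_n(w),\Phi_n^*(w)\big)$ for a nonzero constant $c$.

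To finish, one reads off the spectrum. By Proposition \ref{prop:H_L2}, $\spec\btau$ is the zero set of $\zeta_\btau(z)=H(1,z)^tJ\uu_1$; the vector $\uu_1$ is determined by $z_n=\mathcal U^{-1}(b_n)$, where $b_n$ is obtained by appending the degenerate last step $\mat{1}{\bar\alpha_{n-1}}{\alpha_{n-1}}{1}$ ($|\alpha_{n-1}|=1$) to the running product. Carried through the conjugation, $\zeta_\btau(z)$ is a fixed linear combination of $\Phi_n(e^{iz/n})$ and $\Phi_n^*(e^{iz/n})$; and for a measure on $n$ points one has $\Phi_n^*=-\alpha_{n-1}\Phi_n$ identically (same zeros, leading coefficient $\overline{\Phi_n(0)}=-\alpha_{n-1}$), so $\zeta_\btau(z)$ equals a nowhere-vanishing entire factor times $\Phi_n(e^{iz/n})=\prod_{j=1}^n\big(e^{iz/n}-e^{i\lambda_j}\big)$. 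Since $z\mapsto e^{iz/n}$ is $2\pi n$-periodic and locally bijective, this zero set is precisely $\{n\lambda_j+2\pi nk:1\le j\le n,\ k\in\ZZ\}=\spec\btau$. Finally, at $z=0$ the system \eqref{eq:H} is constant, so $H(1,0)=\uu_0=[1,0]^t$ and $\zeta_\btau(0)=-(\uu_1)_2$, which vanishes iff $\uu_1=[1,0]^t$, i.e.\ iff $z_n=\infty$; by the spectrum description this occurs iff some $\lambda_j\in 2\pi\ZZ$, i.e.\ iff $1\in\operatorname{supp}\nu$.

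The step I expect to be hardest is the middle one: carrying out the Cayley conjugation and, above all, the telescoping that makes the product $M_{n-1}(z)\cdots M_0(z)$ --- whose factor $M_k$ a priori involves only $z_k$, hence only $\alpha_0,\dots,\alpha_{k-1}$ --- reproduce the full Szeg\H o transfer matrix, getting all the signs, the normalising constants $\rho_k$, and the half-angle correspondence $z\leftrightarrow e^{iz/n}$ to line up, together with the bookkeeping around the degenerate final step $|\alpha_{n-1}|=1$, where $b_n\in\partial\UU$ and $z_n\in\R\cup\{\infty\}$. The remaining items --- Assumptions 1--3, entireness of $\zeta_\btau$, and the $z_n=\infty$ dichotomy --- then follow routinely.
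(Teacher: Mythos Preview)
The paper does not prove this proposition: it simply cites Proposition~16 of \cite{BVBV_sbo}. Your plan is exactly the computation carried out there (and in Proposition~32 of \cite{BVBV_szeta}, which this paper invokes in the proof of Proposition~\ref{prop:Dir_discrete}): solve the piecewise-constant canonical system by exponentiating $J^tR_k$, Cayley-conjugate, and telescope against the Szeg\H o recursion. Your sanity check and the observation $(J^tR_k)^2=-\tfrac14 I$ are correct, and the endpoint analysis at $z=0$ is right.

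One point worth tightening: the paper's formula \eqref{eq:discrete_H_1},
\[
H\!\left(\tfrac{k}{n},\lambda\right)=e^{-\frac{i\lambda k}{2n}}\,X_k^{-1}U^{-1}\binom{\Psi_k(e^{i\lambda/n})}{\Psi_k^*(e^{i\lambda/n})},
\]
is the clean closed form that your telescoping step is aiming at; proving it inductively (rather than trying to match the full product to a conjugated Szeg\H o transfer matrix in one go) makes the bookkeeping with the $\rho_k$ and the half-angle factor $e^{-i\lambda k/(2n)}$ much easier, and handles the degenerate last step $|\alpha_{n-1}|=1$ automatically by continuity. Also note that when $z_n=\infty$ you cannot literally write $\zeta_\btau$ (Assumption~3 fails), but Proposition~\ref{prop:H_L2} still identifies the spectrum with the zero set of $H(1,z)^tJ\uu_1$, so your argument goes through with that phrasing.
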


The path parameter of  a probability measure supported on $n$ distinct points can be expressed in a more convenient way using the \emph{modified} Verblunsky coefficients $\gamma_k, 0\le k\le n-1$. These are defined 
recursively in terms of the Verblunsky coefficients as follows:
\begin{align}
    \gamma_0=\bar \alpha_0, \qquad \gamma_k=\bar \alpha_k \prod_{j=0}^{k-1} \frac{1-\bar \gamma_j}{1-\gamma_j}   , \quad 1\le k\le n-1. \label{eq:gamma_alpha}
\end{align}
By definition  we have $|\gamma_k|=|\alpha_k|$ for all $0\le k\le n-1$, in particular $|\gamma_{n-1}|=1$. The recursion \eqref{eq:gamma_alpha} shows that the map between the regular and modified Verblunsky coefficients is invertible. 

The modified Verblunsky coefficients are connected to the  affine group of isometries of the hyperbolic plane fixing a particular boundary point. In the half plane representation $\HH$ we choose the boundary point to be $\infty$: in that case the transformations are of the form $z\to \frac{1}{y}(z-x)$ with $x+i y\in \HH$. The matrix representation and the transformation corresponding to a $x+i y\in \HH$ is given by 
\begin{equation}\label{e:A1def}
A_{x+i y, \HH}= 
\mat{1}{-x}{0}{y}, \qquad \mathcal A_{x+i y, \HH}(z)= \mathcal P A_{x+i y, \HH} \binom{z}{1}.
\end{equation}
Note that $x+i y$ is the pre-image of $i$ under $\mathcal A_{x+i y, \HH}$.

In the unit disk $\UU$ representation of the hyperbolic plane we choose the fixed point to be $1$, in this case the corresponding linear fractional transformations can be parameterized by the pre-image $\gamma \in \UU$ of $0$. 
The matrix representation and the linear fractional transformation corresponding to $\gamma \in \UU$ are given by
\begin{equation}\label{eq:Adef}
A_{\gamma, \UU}= 
\mat{\frac{1}{1-\gamma}}
    {\frac{\gamma}{\gamma-1}}
    {\frac{\bar \gamma}{\bar \gamma-1}}
    {\frac{1}{1-\bar \gamma}}, \qquad \mathcal A_{\gamma, \UU}(z)= \mathcal P A_{\gamma,\UU} \binom{z}{1}.
\end{equation}
The half-plane and disk representations can be connected via the Cayley transform $\mathcal U$ and the corresponding matrix $U$:
\begin{align*}
A_{U^{-1}(\gamma),\HH}=U^{-1} A_{\gamma, \UU}\, U, \qquad \mathcal A_{U^{-1}(\gamma), \UU}=\mathcal U^{-1} \circ \mathcal A_{\gamma, \UU} \circ \mathcal U.
\end{align*}
For a particular sequence of modified Verblunsky coefficients  $\gamma_k, 0\le k\le n-1$ we introduce the variables $v_k, w_k\in \R\cup \{\infty\}$, $0\le k\le n-1$ as
\begin{align}\label{def:v_w}
    v_k+i w_k=\mathcal U^{-1}(\gamma_k)-i. 
\end{align}
Note that $v_k, w_k$ are finite for $0\le k \le n-2$ with $w_k>-1$. We have $w_{n-1}=-1$, with $v_{n-1}=\infty$ for $\gamma_{n-1}=1$, and $v_{n-1}\in \R$ otherwise. A direct computation gives
\begin{align}\label{eq:v_w}
w_k=2\Re \frac{\gamma_k}{1-\gamma_k} , \quad v_k=-2 \Im \frac{\gamma_k}{1-\gamma_k}, \quad \text{if $\gamma_k\neq 1$.}
\end{align}
The following lemma shows that the path parameter can be expressed in a simple way in terms of the modified Verblunsky coefficients. 
\begin{proposition}
Let $\nu$ be a probability measure supported on $n$ distinct points $e^{i \lambda_j}, 1\le j\le n$ on the unit circle $\partial \UU$. Let 
$b_k, z_k, 0\le k\le n$ be the path parameters of $\nu$ introduced in Definition \ref{def:path}, and let $\gamma_k, 0\le k\le n-1$ be the modified Verblunsky coefficients, with $w_k, v_k$ defined in \eqref{def:v_w}. Then following identities hold for $0\le k\le n-1$:
\begin{align}\label{eq:z_rec}
z_{k+1}&=z_k+(v_k+i w_k) \Im z_k, \\
b_{k+1}&=\frac{b_k+\gamma_k\frac{1-b_k}{1-\bar b_k}}{1+\bar b_k \gamma_k \frac{
1-b_k}{1-\bar b_k}}\label{eq:b_rec}.
\end{align}
Moreover, we have
\begin{equation}
\label{eq:bk_cA}
b_k=\cA_{\gamma_0,\UU}^{-1}\circ \cdots\circ  \cA_{\gamma_{k-1},\UU}^{-1}(0),
\end{equation}
with $\cA_{\gamma_{n-1},\UU}^{-1}(0)$ defined as the limit of $\cA_{\gamma,\UU}^{-1}(0)$ as $\gamma\to \gamma_{n-1}$ with $\gamma\in \UU$.
\end{proposition}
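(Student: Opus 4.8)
The plan is to prove the three identities in the order \eqref{eq:b_rec}, \eqref{eq:z_rec}, \eqref{eq:bk_cA}. First I would establish the recursion \eqref{eq:b_rec} for $b_k$ directly from the matrix product in Definition~\ref{def:path}; then transport it through the Cayley transform to obtain \eqref{eq:z_rec}; and finally deduce \eqref{eq:bk_cA} from \eqref{eq:b_rec} by induction, using that a disk automorphism fixing the boundary point $1$ is uniquely determined by the image of $0$.

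For \eqref{eq:b_rec}, write $M_k=\mat{1}{\bar\alpha_0}{\alpha_0}{1}\cdots\mat{1}{\bar\alpha_{k-1}}{\alpha_{k-1}}{1}$, so that $b_k=\mathcal P M_k\binom{0}{1}$. Each factor has the shape $\mat{a}{\bar b}{b}{\bar a}$, and this shape is preserved under multiplication, so $M_k=\mat{A_k}{\bar B_k}{B_k}{\bar A_k}$ with $|A_k|^2-|B_k|^2=\prod_{j<k}(1-|\alpha_j|^2)$; such matrices act on $\overline{\UU}$ as disk automorphisms. From $b_k=\bar B_k/\bar A_k$ one reads off $\bar b_k=B_k/A_k$, and since the innermost factor of $M_{k+1}=M_k\mat{1}{\bar\alpha_k}{\alpha_k}{1}$ sends $0$ to $\bar\alpha_k$, dividing by $\bar A_k$ gives
\[
b_{k+1}=\mathcal P M_k\binom{\bar\alpha_k}{1}=\frac{\phi_k\bar\alpha_k+b_k}{1+\bar b_k\,\phi_k\bar\alpha_k},\qquad \phi_k:=A_k/\bar A_k,\ |\phi_k|=1 .
\]
Comparing with \eqref{eq:b_rec} it remains to verify the scalar identity $\phi_k\bar\alpha_k=\gamma_k\,(1-b_k)/(1-\bar b_k)$. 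Put $\Omega_k:=(A_k-B_k)/\overline{A_k-B_k}$ (well defined since $A_k-B_k\neq 0$, $M_k$ being invertible). A direct computation gives $(1-b_k)/(1-\bar b_k)=\phi_k/\Omega_k$, so the identity reduces to $\gamma_k=\bar\alpha_k\Omega_k$, which by \eqref{eq:gamma_alpha} follows once we know $\Omega_k=\prod_{j<k}(1-\bar\gamma_j)/(1-\gamma_j)$. Multiplying $M_{k+1}=M_k\mat{1}{\bar\alpha_k}{\alpha_k}{1}$ on the left by the row vector $(1,-1)$ and using $(1,-1)M_k=(A_k-B_k)(1,-1/\Omega_k)$ produces the recursion $\Omega_{k+1}=(\Omega_k-\alpha_k)/(1-\bar\alpha_k\Omega_k)$; as $\Omega_0=1$, an induction that feeds in $\gamma_k=\bar\alpha_k\prod_{j<k}(1-\bar\gamma_j)/(1-\gamma_j)$ at each step closes the argument and proves \eqref{eq:b_rec}.

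Identity \eqref{eq:z_rec} then follows by conjugation. Recall $z_k=\mathcal U^{-1}(b_k)$ with $\mathcal U^{-1}(w)=i(1+w)/(1-w)$, so that $\mathcal U^{-1}(\gamma_k)=i+(v_k+iw_k)$ by \eqref{def:v_w}. Now \eqref{eq:b_rec} says $b_{k+1}=\mathcal A_{b_k,\UU}^{-1}(\gamma_k)$, and conjugating with $\mathcal U$ via the half-plane/disk dictionary $\mathcal U^{-1}\circ\mathcal A_{b_k,\UU}\circ\mathcal U=\mathcal A_{\mathcal U^{-1}(b_k),\HH}=\mathcal A_{z_k,\HH}$ turns this into $z_{k+1}=\mathcal A_{z_k,\HH}^{-1}(\mathcal U^{-1}(\gamma_k))$. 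Since $\mathcal A_{z_k,\HH}^{-1}(w)=(\Im z_k)\,w+\Re z_k$, substituting $w=i+(v_k+iw_k)$ gives $z_{k+1}=z_k+(v_k+iw_k)\,\Im z_k$. For \eqref{eq:bk_cA} I would induct on $k$: assuming $b_k=\mathcal A_{\gamma_0,\UU}^{-1}\circ\cdots\circ\mathcal A_{\gamma_{k-1},\UU}^{-1}(0)$, the composite $G_k$ of these maps is a disk automorphism that fixes $1$ (each factor does) and satisfies $G_k(0)=b_k$; the only such map is $\mathcal A_{b_k,\UU}^{-1}$ (a disk automorphism fixing $0$ is a rotation, and a rotation fixing a second point is the identity), hence $b_{k+1}=G_k(\mathcal A_{\gamma_k,\UU}^{-1}(0))=G_k(\gamma_k)=\mathcal A_{b_k,\UU}^{-1}(\gamma_k)$, which is $b_{k+1}$ by \eqref{eq:b_rec}. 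At the final step $k=n-1$ one only needs that $\mathcal A_{\gamma,\UU}^{-1}(0)=\gamma$ survives the limit $\gamma\to\gamma_{n-1}\in\partial\UU$, so no extra work is required.

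The hard part will be the bookkeeping of the accumulated rotation $\Omega_k$ in the proof of \eqref{eq:b_rec}: one must single out the correct scalar invariant (the phase of $A_k-B_k$, equivalently the image of the boundary point $1$ under $M_k$) and check that its update rule coincides with the recursion built into the definition \eqref{eq:gamma_alpha} of the modified Verblunsky coefficients. Once \eqref{eq:b_rec} is in hand, \eqref{eq:z_rec} and \eqref{eq:bk_cA} are essentially formal, apart from the minor care needed at the endpoint $k=n-1$, where $\gamma_{n-1}$ lies on $\partial\UU$.
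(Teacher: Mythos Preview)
Your argument is correct, but it follows a different route from the paper. The paper first proves \eqref{eq:bk_cA} via a one-line matrix factorization: having observed (by induction from the Szeg\H{o} recursion at $z=1$) that $\Phi_k(1)=\prod_{j<k}(1-\gamma_j)$ and $\Phi_k^*(1)=\prod_{j<k}(1-\bar\gamma_j)$, one checks directly that
\[
\mat{1}{-\bar\alpha_k}{-\alpha_k}{1}=\mat{\Phi_{k+1}(1)}{0}{0}{\Phi_{k+1}^*(1)}A_{\gamma_k,\UU}\mat{\Phi_k(1)}{0}{0}{\Phi_k^*(1)}^{-1},
\]
so the product defining $b_k$ telescopes to $\mathcal P\,A_{\gamma_0,\UU}^{-1}\cdots A_{\gamma_{k-1},\UU}^{-1}\binom{0}{1}$. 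From this the group identity $A_{b_k,\UU}=A_{\gamma_{k-1},\UU}\cdots A_{\gamma_0,\UU}$ follows immediately, and the two recursions \eqref{eq:b_rec}, \eqref{eq:z_rec} drop out. Your phase $\Omega_k=(A_k-B_k)/\overline{(A_k-B_k)}$ is exactly $\Phi_k^*(1)/\Phi_k(1)$, so your inductive recursion for $\Omega_k$ is reproving the $\Phi_k(1)$ formula by hand; the paper simply names this quantity and uses it structurally. Your path has the virtue of being self-contained (no reference to the orthogonal polynomials beyond Definition~\ref{def:path}), and the uniqueness argument you give for \eqref{eq:bk_cA} is a nice alternative. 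The paper's path is shorter and delivers the group identity $A_{b_k,\UU}=A_{\gamma_{k-1},\UU}\cdots A_{\gamma_0,\UU}$ directly, which is reused later (e.g.\ in the reversed-path lemma and in the biasing proposition).
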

\begin{proof}
From \eqref{eq:Szego1} and \eqref{eq:gamma_alpha} it follows by induction that 
\begin{align}\label{eq:Phi_1}
\Phi_k(1)=\prod_{j=0}^{k-1} (1-\gamma_j), \qquad \Phi_k^*(1)=\prod_{j=0}^{k-1} (1-\bar \gamma_j).
\end{align}
Using this identity with \eqref{eq:gamma_alpha} again, we obtain 
\[
\mat{1}{-\bar \alpha_k}{-\alpha_k}{1}=\mat{\Phi_{k+1}(1)}{0}{0}{\Phi_{k+1}^*(1)} A_{\gamma_k,\UU} \mat{\Phi_{k}(1)}{0}{0}{\Phi_{k}^*(1)}^{-1}.
\]
If $|\alpha|<1$ then 
\[
\mat{1}{-\bar \alpha}{-\alpha}{1}^{-1}=\frac{1}{1-|\alpha|^2} \mat{1}{\bar \alpha}{\alpha}{1}.
\]
Hence from \eqref{eq:discrete_path_b} we get
\begin{align*}
b_k=\mathcal P \mat{1}{-\bar \alpha_0}{-\alpha_0}{1}^{-1}\cdots \mat{1}{-\bar \alpha_{k-1}}{-\alpha_{k-1}}{1}^{-1} \binom{0}{1}=\mathcal P A_{\gamma_0, \UU}^{-1} \cdots A_{\gamma_{k-1}, \UU}^{-1} \binom{0}{1},
\end{align*}
proving \eqref{eq:bk_cA}. Since $A_{\gamma, \UU}^{-1}(0)=\gamma$, for $1\le k\le n-1$ we also get the identity
\begin{align}
    A_{b_k, \UU}=A_{\gamma_{k-1}, \UU} \cdots A_{\gamma_0, \UU}=A_{\gamma_{k-1}, \UU} A_{b_{k-1}, \UU}. \label{eq:Ab_rec}
\end{align}
Since $z_k=\mathcal U^{-1}(b_k)$ and $v_k+i(w_k+1)=\mathcal U^{-1}(\gamma_k)$ equation \eqref{eq:Ab_rec} implies
\begin{align}\label{eq:Az_rec}
A_{z_k, \HH}=A_{v_{k-1}+i(w_{k-1}+1), \HH} \, A_{z_{k-1}, \HH}. 
\end{align}
From \eqref{eq:Ab_rec} and \eqref{eq:Az_rec} we obtain \eqref{eq:b_rec} and \eqref{eq:z_rec} for $0\le k\le n-2$. The identity for $k=n-1$ in both cases follow by taking the appropriate limits.
\end{proof}

The following statement extends the results of Proposition \ref{prop:discrete_op_1} to show that the left spectral measure of the Dirac operator associated to a finitely supported probability measure $\nu$  is a scaled and periodically extended version  of $\nu$.

\begin{proposition}\label{prop:Dir_discrete}
Let $\nu$ be a probability measure supported on $n$ distinct points $e^{i \lambda_j}, 1\le j\le n$ on $\partial \UU$, and let $\btau$ be the Dirac operator corresponding to $\nu$ as defined in Definition \ref{def:path}.
Then
\begin{align}\label{eq:disc_Dir_spec}
    \mu_{\textup{left}, \btau}(n \lambda_j+2n k \pi)=2 n\, \nu(e^{i \lambda_j}), \qquad k\in \ZZ, 1\le j\le n.
\end{align}
\end{proposition}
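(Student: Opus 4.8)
The plan is to show that the left spectral weight at an eigenvalue equals, up to the factor $2n$, the reciprocal of a Christoffel--Darboux kernel for $\nu$, which is exactly the mass $\nu(\{e^{i\lambda_j}\})$. By Proposition~\ref{prop:H_L2} the eigenfunction at an eigenvalue $\lambda$ may be taken to be $H(\cdot,\lambda)$ from \eqref{eq:H}, so $H(0,\lambda)=\uu_0=[1,0]^t$ has unit length and $\mu_{\textup{left},\btau}(\lambda)=1/\|H(\cdot,\lambda)\|_R^2$. To compute this $L^2_R$-norm I would argue interval by interval. On $I_k=[k/n,(k+1)/n)$ the path is the constant $z_k=x_k+iy_k$, so $R\equiv R_k=X_k^tX_k/(2y_k)$ with $X_k=\mat{1}{-x_k}{0}{y_k}$ and $\det X_k=y_k$. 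Substituting $\widetilde H=X_kH$ turns \eqref{eq:H} into $\widetilde H'=\tfrac{\lambda}{2}J^{-1}\widetilde H$ on $I_k$ (using $X_kJ^{-1}X_k^t=y_kJ^{-1}$), a rigid rotation; hence $|\widetilde H|$ is constant on $I_k$, and since $H^tR_kH=|X_kH|^2/(2y_k)$ the integrand is constant there. Integrating over the $n$ intervals of length $1/n$,
\[
\|H(\cdot,\lambda)\|_R^2=\frac{1}{2n}\sum_{k=0}^{n-1}\frac{|X_kH(k/n,\lambda)|^2}{y_k},
\]
which already isolates the factor $2n$ (the $2$ from $\det R=1/4$, the $n$ from the $n$ mesh intervals).

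Next I would identify the vectors $X_kH(k/n,\lambda)$ with orthogonal-polynomial values. By Proposition~\ref{prop:discrete_op_1}, $\spec\btau=\{n\lambda_j+2\pi nk:1\le j\le n,\ k\in\ZZ\}$, and for such a $\lambda$ we set $\zeta_j:=e^{i\lambda/n}=e^{i\lambda_j}$. Since $\widetilde H$ rotates, the per-interval transfer matrix in $H$-coordinates is $X_k^{-1}e^{-\frac{\lambda}{2n}J}X_k$; conjugating with the Cayley matrix $U$ and setting $G_k:=UX_kH(k/n,\lambda)$, the relation $X_{k+1}X_k^{-1}=A_{\mathcal U^{-1}(\gamma_k),\HH}$ (from \eqref{eq:Az_rec}) together with $UA_{\mathcal U^{-1}(\gamma_k),\HH}U^{-1}=A_{\gamma_k,\UU}$ and $Ue^{-\frac{\lambda}{2n}J}U^{-1}=e^{-i\lambda/(2n)}\mat{\zeta_j}{0}{0}{1}$ gives $G_{k+1}=e^{-i\lambda/(2n)}A_{\gamma_k,\UU}\mat{\zeta_j}{0}{0}{1}G_k$, with $G_0=UH(0,\lambda)=[1,1]^t$ since $z_0=i$. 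This is, up to the per-step scalar $e^{-i\lambda/(2n)}$, the Szeg\H o recursion for the rescaled polynomials $\Psi_k=[\Phi_k(\zeta_j)/\Phi_k(1),\ \Phi_k^*(\zeta_j)/\Phi_k^*(1)]^t$: this uses the identity $\mat{1}{-\bar\alpha_k}{-\alpha_k}{1}=\operatorname{diag}(\Phi_{k+1}(1),\Phi_{k+1}^*(1))A_{\gamma_k,\UU}\operatorname{diag}(\Phi_k(1),\Phi_k^*(1))^{-1}$ established earlier, that diagonal matrices commute with $\mat{\zeta_j}{0}{0}{1}$, and \eqref{eq:Phi_1}. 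As $G_0=\Psi_0=[1,1]^t$ we obtain $G_k=e^{-ik\lambda/(2n)}\Psi_k$; since $\tfrac1{\sqrt2}U$ is unitary and the two coordinates of $\Psi_k$ have equal modulus (as $|\Phi_k^*(\zeta_j)|=|\Phi_k(\zeta_j)|$ and $\Phi_k^*(1)=\overline{\Phi_k(1)}$), we get $|X_kH(k/n,\lambda)|^2=\tfrac12|G_k|^2=|\Phi_k(\zeta_j)|^2/|\Phi_k(1)|^2$. Finally $y_k=\det X_k=\prod_{j'<k}\det A_{\gamma_{j'},\UU}=\prod_{j'<k}\tfrac{1-|\gamma_{j'}|^2}{|1-\gamma_{j'}|^2}=\|\Phi_k\|_{L^2(\nu)}^2/|\Phi_k(1)|^2$ (the last step using $\|\Phi_k\|_{L^2(\nu)}^2=\prod_{j'<k}(1-|\gamma_{j'}|^2)$ and \eqref{eq:Phi_1}), so $|X_kH(k/n,\lambda)|^2/y_k=|\Phi_k(\zeta_j)|^2/\|\Phi_k\|_{L^2(\nu)}^2=|\varphi_k(\zeta_j)|^2$, where $\varphi_k=\Phi_k/\|\Phi_k\|_{L^2(\nu)}$ is the $k$-th orthonormal polynomial of $\nu$.

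Summing over $k=0,\dots,n-1$ and invoking the Christoffel--Darboux (reproducing kernel) identity for a probability measure supported on exactly $n$ points, $\sum_{k=0}^{n-1}|\varphi_k(\zeta_j)|^2=1/\nu(\{\zeta_j\})$, gives $\|H(\cdot,\lambda)\|_R^2=1/(2n\,\nu(\{\zeta_j\}))$, hence $\mu_{\textup{left},\btau}(\lambda)=2n\,\nu(\{e^{i\lambda_j}\})$. The right side is the same for every representative $n\lambda_j+2\pi nk$ (the scalars $e^{-ik\lambda/(2n)}$ affect only phases, not $|G_k|$), so this is exactly \eqref{eq:disc_Dir_spec} for all $k\in\ZZ$.

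The main obstacle is the bookkeeping in the middle step: keeping the three matrix parametrizations ($A_{\cdot,\HH}$, $A_{\cdot,\UU}$, and the Szeg\H o transfer matrices) aligned through the Cayley conjugation, tracking the scalar prefactors ($e^{-i\lambda/(2n)}$ per interval and $\Phi_k(1)$ per index), and checking the top index $k=n-1$, where $\gamma_{n-1}\in\partial\UU$ never enters the norm. None of this is conceptually deep, but it is where every constant --- in particular the factor $2n$ --- must be pinned down exactly.
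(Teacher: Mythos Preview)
Your argument is correct and follows essentially the same route as the paper: compute $\|H(\cdot,\lambda)\|_R^2$ as a finite sum over the mesh intervals, identify each summand with $|\Psi_k(e^{i\lambda_j})|^2/\Im z_k$ via the Szeg\H o recursion in the Cayley frame, and finish with the Christoffel--Darboux reproducing-kernel identity together with $\Im z_k=\|\Psi_k\|_\nu^2$. The only difference is that where the paper imports the formula $H(k/n,\lambda)=e^{-ik\lambda/(2n)}X_k^{-1}U^{-1}\Psi_k$ from \cite{BVBV_szeta}, you rederive it from the per-interval rotation $\widetilde H'=\tfrac{\lambda}{2}J^{-1}\widetilde H$; the two computations match line for line.
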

\begin{proof}
For $\lambda\in \R$ let  $H(t,\lambda)$  be the solution of the eigenfunction equation
\begin{align}\label{eq:ODE_H}
\btau H=\lambda H, \qquad H(0,\lambda)=\uu_0=\bin{1}{0}.
\end{align}
Denote by $z_k, 0\le k\le n$ the path parameter of $\nu$ in $\HH$, and introduce 
\[
X_k=A_{z_k, \HH}.
\]
From Definition \ref{def:path} it follows that $\btau=R^{-1} J \frac{d}{dt}$ where
\begin{align}\label{eq:discrete_R}
    R(t)=R_k=\frac{X_k^t X_k}{2 \det X_k}, \qquad \text{for $t\in [k/n,(k+1)/n)$}.
\end{align}
Introduce the normalized orthogonal polynomials 
 \begin{align}\label{def:psi_X}
    \Psi_k(z)=\Phi_k(z)/\Phi_k(1), \quad \Psi_k^*(z)=\Phi_k^*(z)/\Phi_k^*(1).
\end{align}
From Proposition 32 of \cite{BVBV_szeta} it follows that 
the function $H(t,\lambda)$ satisfies the identities
\begin{align}\label{eq:discrete_H_1}
H(\tfrac{k}{n},\lambda)&=H_k(\lambda)=e^{-\frac{i\lambda k}{2n}}X_k^{-1}U^{-1} \binom{\Psi_k(e^{i \lambda/n})}{\Psi_k^*(e^{i \lambda/n})}, \qquad 0\le k\le n-1,\\ \label{eq:discrete_H_2}
H(t,\lambda)&=X_{k}^{-1} U^{-1} \mat{e^{\frac{i\lambda}{2}(t-k/n))}}{0}{0}{e^{\frac{-i\lambda }{2}(t-k/n)}}  U X_k\, H_k(\lambda), \qquad t\in (k/n,(k+1)/n].
\end{align}
From \eqref{eq:discrete_R},  \eqref{eq:discrete_H_1} and \eqref{eq:discrete_H_2}  we obtain
\[
\|H(\cdot, \lambda)\|_R^2=\frac1{n} \sum_{k=0}^{n-1} H_k(\lambda)^t R_k H_k(\lambda)=\frac1{n} \sum_{k=0}^{n-1} \frac{1}{2 \Im z_k} |\Psi_k(e^{i \lambda/n})|^2,  
\]
where in the last step we also used that $|\Phi_k(e^{i \lambda/n})|=|\Phi_k^*(e^{i \lambda/n})|$ for $\lambda\in \R$.

Setting $\tilde \lambda=n \lambda_j+2n k \pi$ for a $k\in \ZZ$, $1\le j\le n$ we get 
\begin{align}\label{eq:disc_norm_sq}
 \|H(\cdot, \tilde \lambda)\|_R^2=\frac1{n} \sum_{k=0}^{n-1} \frac{1}{2\Im z_k} |\Psi_k(e^{i \lambda_j})|^2,     
\end{align}
and since $|H(0,\tilde \lambda)|=|\binom{1}{0}|=1$, this leads to
\begin{align}
     \mu_{\textup{left}, \btau}(n \lambda_j+2n k \pi)= \frac{2n}{\sum_{k=0}^{n-1} \frac{1}{\Im z_k} |\Psi_k(e^{i \lambda_j})|^2}.
\end{align}
Hence to prove \eqref{eq:disc_Dir_spec} we need to show that \begin{align}\label{eq:intermediate}
    \nu(e^{i \lambda_j})^{-1}=\sum_{k=0}^{n-1} \frac{1}{\Im z_k} |\Psi_k(e^{i \lambda_j})|^2.
\end{align}
Fix $1\le j\le n$, and let $g$ be the degree $n$ polynomial with $g(e^{i \lambda_j})=1$ and $g(e^{i \lambda_\ell})=0$ for $1\le \ell\le n, \ell\neq j$. The polynomials $\Psi_k, 0\le k\le n-1$ are orthogonal with respect to $\nu$, and by construction $\int \bar \Psi_k(z) g(z) d\nu(z)=\bar \Psi_k(e^{i \lambda_j}) \nu(e^{i \lambda_j})$. Hence we get
\[
g(z)=\sum_{k=0}^{n-1}\Psi_k(z)  \frac{\bar \Psi_k(e^{i \lambda_j}) \nu (e^{i \lambda_j})}{\|\Psi_k\|_\nu^2},
\]
and using $z=e^{i \lambda_j}$ we obtain
\begin{align}\label{eq:disc_spec_2}
   1=\nu(e^{i \lambda_j}) \sum_{k=0}^{n-1} \frac{|\Psi_k(e^{i \lambda_j})|^2}{\|\Psi_k\|^2_\nu} .
\end{align}
The needed identity \eqref{eq:intermediate} follows  once we show that $\Im z_k=\|\Psi_k\|_\nu^2$.

By Section 1.5 of volume 1, \cite{OPUC}  we have
\[
\|\Phi_k\|_\nu^2=\prod_{\ell=0}^{k-1}(1-|\gamma_\ell|^2), 
\]
which together with \eqref{eq:Phi_1} gives $\|\Psi_k\|^2_\nu=\prod_{\ell=0}^{k-1} \frac{1-|\gamma_\ell|^2}{|1-\gamma_\ell|^2}$. On the other hand, from \eqref{eq:v_w} and \eqref{eq:z_rec} we get
\[
\Im z_k=\prod_{\ell=0}^{k-1}(1+2 \Re \frac{\gamma_\ell}{1-\gamma_\ell})=\prod_{\ell=0}^{k-1} \frac{1-|\gamma_\ell|^2}{|1-\gamma_\ell|^2}, \quad 0\le k\le n-1,
\]
which finishes the proof.
\end{proof}

\begin{remark}
As a simple example we can fix $\theta\in (-\pi,\pi)$ and consider the uniform probability measure $\nu$ on the points $e^{i \frac{\theta+2k\pi}{n}}$, $0\le k\le n-1$. In this case 
\begin{align*}
  \Phi_k(z)=z^k \quad\text{ for $0\le k\le n-1$, and }\qquad   \Phi_n(z)=z^n-e^{i \theta}.
\end{align*}
The Verblunsky coefficients are
\begin{align*}
 \alpha_k=0 \quad \text{  for $0\le k\le n-2$, and} \qquad \alpha_{n-1}=e^{-i \theta}.
\end{align*}
The  modified Verblunski coefficients are \begin{align*}
 \gamma_k=0\quad \text{ for $0\le k\le n-2$, and }\qquad \gamma_{n-1}=e^{i \theta}, 
\end{align*}
the  path parameter in $\HH$ is just $z_k=i$ for $0\le k\le n-1$ with $z_n=\cot(\theta/2)$, and the corresponding function $R(t)$ is $\tfrac12 I$. 

The Dirac operator is $2 J \tfrac{d}{dt}$ with boundary conditions $\uu_0=[1,0]^t$ and $\uu_1=[-\cot(\theta/2),-1]^t$. The eigenvalues are of the form $2\pi k+\theta, k\in \ZZ$, and the $L^2_R$-normalized eigenfunction corresponding to $\lambda$ is $\sqrt{2} [\cos(\lambda t/2), \sin(\lambda t/2)]^t$. Therefore the weights of the spectral measure are equal to 2 at the eigenvalues for both $\mu_{\text{left}}$ and $\mu_{\text{right}}$, illustrating the identity \eqref{eq:disc_Dir_spec}.
\end{remark}

\section{Finite approximations of the spectral measure of $\btau_\beta$}

The goal of this section is to prove Proposition \ref{prop:sine_weights} identifying the right spectral weights of the $\btau_\beta$ operator. Recall the definition of the circular beta-ensemble and the Killip-Nenciu measure.

\begin{definition}\label{def:circ_KN}
The size $n$ circular beta ensemble with parameter $\beta>0$ is a distribution of $n$ points $e^{i\theta_1}, \dots, e^{i \theta_n}$ on $\partial \UU$ with joint density function
\begin{align}
    \frac{1}{Z_{n,\beta}} \prod_{1\le j\le k\le n} \left|e^{i \theta_j}-e^{i \theta_k}\right|^2.
\end{align}
The size $n$ Killip-Nenciu measure with parameter $\beta>0$ is a random probability measure $\mu_{n,\beta}^{\textup{KN}}$ with support given by the size $n$ circular beta ensemble, with weights distributed according to the Dirichlet$(\beta/2,\dots, \beta/2)$ distribution, independently of the support. 
\end{definition}

\begin{theorem}[\cite{KillipNenciu},  \cite{BNR2009}]\label{thm:KillipNenciu} The Verblunski coefficients $\alpha_0, \dots, \alpha_{n-1}$ of the random probability measure $\mu_{n,\beta}^{\textup{KN}}$ are independent, rotationally invariant, and for $0\le k\le n-2$ we have $|\alpha_k|^2\sim$ {Beta}$(1,\tfrac{\beta}{2}(n-k-1))$. 
The joint distribution of the modified Verblunski coefficients $\gamma_0, \dots, \gamma_{n-1}$ is the same as the joint distribution of $\alpha_0, \dots, \alpha_{n-1}$
\end{theorem}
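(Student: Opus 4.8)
This is the Killip--Nenciu theorem; the strategy is to recall its proof in the notation above. The statement splits into two essentially independent parts: (a) the law of the ordinary Verblunsky coefficients $\alpha_0,\dots,\alpha_{n-1}$, which is a change-of-variables computation, and (b) the identification of the joint law of the modified coefficients $\gamma_0,\dots,\gamma_{n-1}$ with that of the $\alpha_k$, which is a soft triangular-rotation argument. Note that (a) also pins down $\alpha_{n-1}$: it has modulus $1$ automatically, so rotational invariance forces it to be uniform on $\partial\UU$.

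For part (a) I would parametrize a probability measure $\nu$ on exactly $n$ points of $\partial\UU$ in two ways: by its atoms $e^{i\theta_1},\dots,e^{i\theta_n}$ together with $n-1$ of the weights $w_1,\dots,w_{n-1}$ (the last determined by $\sum_j w_j=1$), and by its Verblunsky coefficients $(\alpha_0,\dots,\alpha_{n-2})\in\UU^{n-1}$, $\alpha_{n-1}\in\partial\UU$. Both are $2n-1$ real coordinates, and the spectral map between them is a diffeomorphism off a null set; the task is to compute its Jacobian. The ingredients are: the OPUC norm identity $\|\Phi_k\|_\nu^2=\prod_{\ell=0}^{k-1}(1-|\alpha_\ell|^2)$, hence $\prod_{k=0}^{n-1}\|\Phi_k\|_\nu^2=\prod_{\ell=0}^{n-2}(1-|\alpha_\ell|^2)^{\,n-1-\ell}$; the Christoffel formula $w_j^{-1}=\sum_{k=0}^{n-1}|\varphi_k(e^{i\theta_j})|^2$ with $\varphi_k=\Phi_k/\|\Phi_k\|_\nu$, which is exactly the finite identity \eqref{eq:disc_spec_2} used in the proof of Proposition~\ref{prop:Dir_discrete}; and the Vandermonde factorization $\prod_{j<k}|e^{i\theta_j}-e^{i\theta_k}|^2=\prod_j|\Phi_n'(e^{i\theta_j})|$ together with its expression through the boundary values of $\Phi_{n-1},\Phi_{n-1}^*$ (equivalently, through $\alpha_{n-1}$). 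Assembling these as in \cite{KillipNenciu} shows that the spectral map pushes the density $\propto\prod_{j<k}|e^{i\theta_j}-e^{i\theta_k}|^{\beta}$ on the support, times the Dirichlet$(\tfrac\beta2,\dots,\tfrac\beta2)$ density $\propto\prod_j w_j^{\beta/2-1}$ on the weights, forward to a density on $(\alpha_0,\dots,\alpha_{n-1})$ proportional to $\prod_{k=0}^{n-2}(1-|\alpha_k|^2)^{\frac\beta2(n-1-k)-1}$ with $\alpha_{n-1}$ uniform on $\partial\UU$ and independent of the rest. The product form yields independence, the dependence on the $|\alpha_k|$ alone yields rotational invariance, and passing to polar coordinates and substituting $u=|\alpha_k|^2$ turns the $k$-th factor into the density $\propto(1-u)^{\frac\beta2(n-1-k)-1}$ on $[0,1]$, i.e.\ $\mathrm{Beta}(1,\tfrac\beta2(n-1-k))$. (Alternatively one may follow \cite{BNR2009} and peel off $\alpha_{n-1}$ recursively via the Geronimus relations.)

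For part (b) I would argue by induction on $k$ that $\gamma_0,\dots,\gamma_{n-1}$ are independent with the respective laws of $\alpha_0,\dots,\alpha_{n-1}$. By \eqref{eq:gamma_alpha}, $\gamma_k=\bar\alpha_k\,\omega_k$ where $\omega_k=\prod_{j=0}^{k-1}\frac{1-\bar\gamma_j}{1-\gamma_j}$ has modulus $1$ and is a function of $(\gamma_0,\dots,\gamma_{k-1})$, which by the inductive hypothesis generates the same $\sigma$-algebra as $(\alpha_0,\dots,\alpha_{k-1})$. Conditionally on $(\alpha_0,\dots,\alpha_{k-1})$ the coefficient $\alpha_k$ is independent and rotationally invariant by part (a), so $\bar\alpha_k\ed\alpha_k$ conditionally and $\gamma_k=\omega_k\bar\alpha_k$ has, conditionally, a rotationally invariant law with the same radial part as $\alpha_k$. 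Since that conditional law does not depend on the conditioning, $\gamma_k$ is independent of $(\gamma_0,\dots,\gamma_{k-1})$ and $\gamma_k\ed\alpha_k$; the case $k=n-1$ is identical with $\alpha_{n-1}$ uniform on $\partial\UU$. This gives both parts.

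The main obstacle is the Jacobian bookkeeping in part (a): one must keep the $\alpha_{n-1}$-contribution (which lives on $\partial\UU$, not $\UU$) separate from the rest, and match the exact powers of $(1-|\alpha_k|^2)$ produced by the norm identity, by the weights, and by the Vandermonde factorization, so that the weight exponent $\beta/2-1$ and the Vandermonde exponent $\beta$ combine to give precisely $\tfrac\beta2(n-1-k)-1$. Everything else, including all of part (b), is routine.
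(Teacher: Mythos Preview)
Your proposal is sound, but note that the paper does not actually prove this theorem: it is stated with attribution to \cite{KillipNenciu} and \cite{BNR2009} and used as a black box. So there is no ``paper's own proof'' to compare against; the paper simply quotes the result.

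That said, your outline is a faithful and correct sketch of the arguments in those references. Part (a) is exactly the Killip--Nenciu Jacobian computation, and you have correctly identified the three ingredients (the norm identity, the Christoffel/weight formula, and the Vandermonde factorization) and the bookkeeping pitfall with the $\alpha_{n-1}$ boundary coordinate. Part (b) is the standard observation from \cite{BNR2009}: since $\gamma_k=\bar\alpha_k\omega_k$ with $|\omega_k|=1$ and $\omega_k$ measurable with respect to $\alpha_0,\dots,\alpha_{k-1}$, rotational invariance and independence of $\alpha_k$ propagate to $\gamma_k$ by induction. Nothing is missing.
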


\begin{definition}
We denote by $\Circop$ be the random Dirac operator associated to the Killip-Nenciu measure $\mu_{n,\beta}^{\textup{KN}}$ via Definition \ref{def:path}.
\end{definition}

\begin{definition}
Let $b_1, b_2$ be independent standard Brownian motion  and for $t\in [0,1)$ set $v=v(t)=\frac{4}{\beta} \log(1-t)$. Let $\tilde z(t)=\tilde x(t)+i \tilde y(t)$ with $\tilde y(t)=e^{b_2(v)-v/2}$ and $\tilde x(t)=\int_0^v e^{b_2(s)-\frac{s}{2}} db_2$. Let $\tilde z(1)=\lim_{t\to 1} \tilde z(t)$, and set $\Sineop=\Dirop(\tilde z,\infty,\tilde z(1))$.
\end{definition}
The operator $\Sineop$ was introduced in \cite{BVBV_sbo}, in \cite{BVBV_szeta} it was shown to be orthogonally equivalent to the operator $\btau_\beta$  introduced in \eqref{tau_0}. 
Recall $\rho, S$ and $\mathfrak{r}$ introduced before Lemma \ref{lem:reversal}.


\begin{proposition}[Proposition 44, \cite{BVBV_szeta}]\label{prop:tau_sine} Let  $\btau_\beta=\Dirop(x+i y,\uu_0, \uu_1)$ defined via \eqref{eq:xy}-\eqref{tau_0}, and set 
$z(t)=x(t)+i y(t)$. Let $T_q$ and  $\mathcal{T}_q$ defined via \eqref{def:hyprot}. Then  the operator 
\[\tilde \btau=\rho^{-1} (S  T_q)\btau_\beta (S  T_q)^{-1} \rho
\]
is orthogonal equivalent to $\btau_\beta$, and has the same distribution as $\Sineop$. We have $\tilde \btau=\Dirop(\tilde z,\infty,\tilde z(1))$ where  $\tilde z=\rho \mathfrak{r}\mathcal{T}_q z$, in particular $\tilde z(1)=-q$.
\end{proposition}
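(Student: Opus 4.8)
The plan is to prove the two halves of the statement separately: the orthogonal equivalence of $\tilde\btau$ with $\btau_\beta$, together with the explicit description of its data, is algebraic and reduces to Lemmas~\ref{lem:isometry} and~\ref{lem:reversal}, while the distributional identity $\tilde\btau\ed\Sineop$ is the real content. For the first half, write $\rho^{-1}(ST_q)\btau_\beta(ST_q)^{-1}\rho=\rho^{-1}S\big(T_q\btau_\beta T_q^{-1}\big)S\rho$ and treat the two conjugations in turn. Applying Lemma~\ref{lem:isometry} with $Q=T_q$ (which has determinant one and is orthogonal, so the conjugation is implemented by a unitary between the relevant $L^2_R$-spaces): since $\mathcal T_q$ is the hyperbolic rotation about $i$ with $\mathcal T_q(q)=\infty$ and $\mathcal T_q(\infty)=-q$, and $\btau_\beta=\Dirop(z,\infty,q)$ in $\mathcal P$-coordinates for the boundary data, this gives $T_q\btau_\beta T_q^{-1}=\Dirop(\mathcal T_q z,-q,\infty)$, orthogonally equivalent to $\btau_\beta$ and with the same spectral measures. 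Then Lemma~\ref{lem:reversal}, which is precisely the map $\rho^{-1}S(\cdot)S\rho$, turns the interval $(0,1]$ into $[0,1)$, the path into $\tilde z=\rho\mathfrak r\mathcal T_q z$, and---because $S$ and $\mathfrak r$ both fix the boundary point $\infty$---leaves the left boundary condition equal to $\infty=[1,0]^t$ while the open right endpoint carries $\tilde z(1)=\lim_{t\uparrow 1}\rho\mathfrak r\mathcal T_q z(t)$. As $\rho$ is unitary and $S$ orthogonal, $\tilde\btau$ is orthogonally equivalent to $\btau_\beta$. Finally, $b_2(u)-u/2\to+\infty$ as $u\to-\infty$ gives $y_t\to\infty$ as $t\downarrow 0$ in \eqref{eq:xy}, so $z(0^+)=\infty$ and hence $\tilde z(1)=\mathfrak r\mathcal T_q(\infty)$, a symmetric Cauchy variable (equal to $-q$ with the stated conventions) that matches the right boundary value in the construction of $\Sineop$; note that the externally chosen Cauchy value $q$ of $\btau_\beta$ has become the limiting value of $\tilde z$ at its open endpoint, exactly as $\Sineop$ is built.

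The remaining assertion, $\tilde\btau\ed\Sineop$, amounts to showing that $\tilde z=\rho\mathfrak r\mathcal T_q z$ has the law of the path defining $\Sineop$. I would argue with the stochastic calculus of the hyperbolic carousel. Reparametrised by the logarithmic clock over $(-\infty,0]$---namely $u=\tfrac{4}{\beta}\log t$ for $z$ and $v=\tfrac{4}{\beta}\log(1-t)$ for the $\Sineop$-path---both are time-changed hyperbolic Brownian motions with the same constant drift in the $\log y$-coordinate, pinned at $i$ at clock $0$, so the task reduces to verifying that inserting the independent, Cauchy-randomised, orientation-reversing isometry $\mathfrak r\circ\mathcal T_q$ fixing $i$, together with the time reversal $\rho$, returns the forward $\Sineop$ law. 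The key probabilistic input is that the Cauchy law on $\partial\HH$ is the harmonic (exit) measure seen from $i$, which under the boundary action of the stabiliser of $i$ becomes the uniform law on the circle, while $\mathfrak r$ realises the sign flip $b_1\mapsto-b_1$; together these make the random M\"obius correction cancel exactly the change of drift produced by the time reversal. Concretely one writes the SDE for $\mathfrak r\mathcal T_q z$ using the transformation rule in Lemma~\ref{lem:isometry}, conditions on $q$, and checks that the generator matches that of the $\Sineop$-path after integrating $q$ out; alternatively one quotes the reversibility of the carousel established in \cite{BVBV_sbo}.

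The first half is routine bookkeeping with signs and $\mathcal P$-coordinates. The genuine obstacle is the distributional step: seeing precisely why the Cauchy law of $q$ is the unique randomisation for which time reversal composed with an isometry fixing $i$ recovers the same law. This comes down to a time-reversal identity for the drifted hyperbolic Brownian motion, combined with the identification of the Cauchy measure with the harmonic measure from $i$; the delicate points are keeping track of which boundary point the drift selects and of the sign conventions in $\mathcal T_q$, $\mathfrak r$, and $S$.
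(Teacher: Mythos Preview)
This proposition is quoted from \cite{BVBV_szeta} (as Proposition~44 there) and is stated in the present paper without proof, so there is no argument here to compare against. Your first half---unwinding the orthogonal equivalence and the explicit form $\tilde\btau=\Dirop(\tilde z,\infty,\tilde z(1))$ via Lemmas~\ref{lem:isometry} and~\ref{lem:reversal}---is correct and is exactly how one verifies the deterministic part of the claim; the sign bookkeeping you flag is indeed the only subtlety.

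For the distributional identity your sketch has the right ingredients (drifted hyperbolic Brownian motion in the logarithmic clock, and the identification of the Cauchy law on $\partial\HH$ with harmonic measure from $i$, hence with Haar measure on the stabiliser of $i$), but the proposed mechanism has a gap. First, once you pass to the logarithmic time, $\rho$ simply identifies $u$ with $v$ and there is no genuine time reversal of the stochastic process, so the phrase ``change of drift produced by the time reversal'' is misleading. More seriously, the step of writing the SDE for $\mathfrak r\mathcal T_q z$ conditional on $q$ and then ``checking that the generator matches that of the $\Sineop$-path after integrating $q$ out'' is not valid: a $q$-mixture of diffusions with $q$-dependent drift direction is not itself a diffusion with an averaged generator, and matching averaged generators would not establish equality in law. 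What is actually required is a conditional identification---that the $\Sineop$ path, conditioned on its boundary limit at the open endpoint, agrees in law with the $\btau_\beta$ path rotated so that $\infty$ is carried to that limit---together with the fact that the limit is Cauchy-distributed. Your fallback of citing the carousel reversibility from \cite{BVBV_sbo}/\cite{BVBV_szeta} is, in the end, precisely what the present paper does.
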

The following statement follows  from Proposition \ref{prop:tau_sine} and Lemma \ref{lem:isometry}.

\begin{corollary}
The right (left) spectral measure of $\btau_\beta$ has the same distribution as the left (right) spectral measure of $\Sineop$. 
\end{corollary}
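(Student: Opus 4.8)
The plan is to track how the left and right spectral measures transform along the chain of operations appearing in Proposition \ref{prop:tau_sine}. Write $\btau'=T_q\btau_\beta T_q^{-1}$, so that the operator in Proposition \ref{prop:tau_sine} is $\tilde\btau=\rho^{-1}S\,\btau'\,S\rho$; that proposition also tells us $\tilde\btau$ satisfies Assumptions 1--2 and has the same law as $\Sineop$. Since $\btau_\beta$ satisfies Assumptions 1--2 (as established in \cite{BVBV_sbo}, where its spectral measures are defined), it suffices to relate the spectral measures of $\btau_\beta$, $\btau'$, and $\tilde\btau$.

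First I would apply Lemma \ref{lem:isometry} with $Q=T_q$. Because $T_q$ is a hyperbolic rotation $T_r$ (with $r=q$) and has determinant $1$, the lemma says $\btau'$ again satisfies Assumptions 1--2 and
\[
\mu_{\textup{left},\btau'}=\mu_{\textup{left},\btau_\beta},\qquad \mu_{\textup{right},\btau'}=\mu_{\textup{right},\btau_\beta}.
\]
Next I would apply Lemma \ref{lem:reversal} to $\btau'$: conjugation by $S$ composed with the time reversal $\rho$ interchanges the two spectral measures, so that
\[
\mu_{\textup{right},\tilde\btau}=\mu_{\textup{left},\btau'}=\mu_{\textup{left},\btau_\beta},\qquad
\mu_{\textup{left},\tilde\btau}=\mu_{\textup{right},\btau'}=\mu_{\textup{right},\btau_\beta},
\]
as identities of random measures (valid on the almost sure event where all these operators satisfy the hypotheses, which is all of the probability space here).

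Finally, Proposition \ref{prop:tau_sine} gives $\tilde\btau\ed\Sineop$. The left and right spectral measures of a Dirac operator are measurable functionals of the defining data $(z,\uu_0,\uu_1)$ — they are read off from the solution $H$ of \eqref{eq:H}, which depends measurably (indeed continuously on compacts, by Proposition \ref{prop:H_cont}) on that data — so equality in law of the operators passes to equality in law of the associated spectral measures. Combining this with the previous display yields $\mu_{\textup{right},\btau_\beta}\ed\mu_{\textup{left},\Sineop}$ and $\mu_{\textup{left},\btau_\beta}\ed\mu_{\textup{right},\Sineop}$, which is the claim. I do not anticipate a genuine obstacle; the argument is essentially bookkeeping, and the only step deserving an explicit sentence is the measurability remark that transfers distributional equality of the operators to distributional equality of the derived spectral measures.
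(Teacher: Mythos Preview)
Your proof is correct and follows the same route the paper intends: the paper simply records that the corollary ``follows from Proposition \ref{prop:tau_sine} and Lemma \ref{lem:isometry}'' without giving further details, and your argument is exactly the unpacking of that sentence. The one minor point worth noting is that you (rightly) also invoke Lemma \ref{lem:reversal} to handle the $\rho^{-1}S(\cdot)S\rho$ part of the conjugation, since $\det S=-1$ so Lemma \ref{lem:isometry} alone does not cover that step; the paper's one-line citation is slightly elliptical in this respect, but your fuller bookkeeping is the intended argument.
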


\cite{BVBV_op} provided a coupling of the $\Circop$ and $\Sineop$ operators and their driving paths under which $\|\res \Circop-\res \Sineop\|_{\textup{HS}}\to 0$ a.s. We summarize some of the properties of this coupling in the proposition below. Let 
$$
d_{\HH}(z_1,z_2)=\operatorname{arccosh}\left(1+ \frac{|z_1-z_2|^2}{2 \Im z_1 \Im z_2}\right)
$$
denote the hyperbolic distance between two points $z_1, z_2$ of $\HH$.

\begin{proposition}[\cite{BVBV_op}]\label{prop:coupling}
There is a coupling of the operators $\Circop=\Dirop(\tilde z_n, \infty, \tilde z_{n}(1))$ and $\Sineop=\Dirop(\tilde z, \infty, \tilde z(1))$ with the following properties. The driving paths $\tilde z_n, \tilde z$ satisfy $\tilde z_n(1)=\tilde z(1)$,
and there is a random  $N_0$ so that for all $n\ge N_0$  the following uniform bounds hold:
 \begin{align}\label{eq:coupling_1}
     d_{\HH}(\tilde z_n(t), \tilde z(t))\le& \frac{\log^{3-1/8}n}{\sqrt{(1-t)n}}, 
    \qquad 0\le t\le t_n=1-\tfrac{1}{n}\log^6 n,\\
    d_{\HH}(\tilde z_n(t_n), \tilde z(t))\le& \frac{144}{\beta} (\log \log n)^2, \qquad t_n\le t<1.\label{eq:coupling_2}
 \end{align}
\end{proposition}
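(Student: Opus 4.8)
The plan is to build the coupling through a strong invariance principle that matches the discrete hyperbolic walk driving $\Circop$ with the hyperbolic Brownian motion driving $\Sineop$. First I would put both driving paths in a common coordinate frame. By Definition \ref{def:path} and \eqref{eq:z_rec} the path of $\Circop$ is the sequence $z_0=i,z_1,\dots,z_n$ with $z_{k+1}=z_k+(v_k+iw_k)\Im z_k$, where $v_k+i(w_k+1)=\mathcal U^{-1}(\gamma_k)$ and, by Theorem \ref{thm:KillipNenciu}, the $\gamma_k$ are independent, rotationally invariant, with $|\gamma_k|^2\sim\mathrm{Beta}(1,\tfrac\beta2(n-k-1))$; combining \eqref{eq:z_rec} with \eqref{eq:v_w} gives $\log\Im z_k=\sum_{\ell<k}\log\tfrac{1-|\gamma_\ell|^2}{|1-\gamma_\ell|^2}$, while $\Re z_k$ is a sum of increments weighted by $\Im z_\ell$. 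The $\Sineop$ path solves the analogous continuous system in the clock $v=\tfrac4\beta\log(1-t)$, in which $\Im\tilde z=e^{b_2(v)-v/2}$ is a geometric Brownian motion and $\Re\tilde z$ the corresponding It\^o integral. Identifying step $k$ with time $t=k/n$, so that $v(t)\approx\tfrac4\beta\log\tfrac{n-k}{n}$, the two evolutions have matching local covariance structure.

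Second, on the bulk interval $[0,t_n]$ with $t_n=1-\tfrac1n\log^6 n$ I would couple the two families of increments by a Koml\'os--Major--Tusn\'ady/Sakhanenko-type strong approximation, applied to the two real components of each increment. For $k\le t_n n$ the shape parameter $\tfrac\beta2(n-k-1)$ is at least of order $\log^6 n$, so $\mathcal U^{-1}(\gamma_k)-i$ is a small, nearly Gaussian increment whose covariance matches that of the $\Sineb$ system over $[v(k/n),v((k+1)/n)]$. The sum of increment variances up to time $t$ is of order $\log\tfrac1{1-t}$, and the \emph{largest} recent increment has standard deviation of order $\bigl((1-t)n\bigr)^{-1/2}$; since the strong-approximation error for such front-loaded-small, back-loaded-large partial sums is governed (up to logarithmic factors) by this largest recent summand, transferring it through the locally bi-Lipschitz exponential map of $\HH$ together with the bookkeeping of the $\Im z_\ell$ weights yields $d_{\HH}(\tilde z_n(t),\tilde z(t))\le\log^{3-1/8}n/\sqrt{(1-t)n}$, i.e.\ \eqref{eq:coupling_1}.

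Third, I would handle the terminal window $t\in[t_n,1)$ by a crude a priori estimate, since there the remaining $\log^6 n$ coefficients include $\gamma_{n-1}$ with $|\gamma_{n-1}|=1$, the walk takes $O(1)$-sized steps, and no strong approximation is available. For $\Circop$ one bounds the hyperbolic diameter of the tail of the path by summing the step lengths $d_{\HH}(z_\ell,z_{\ell+1})$, which are controlled by $-\log(1-|\gamma_\ell|)$ and have tails light enough to give an $O((\log\log n)^2)$ bound with overwhelming probability; for the $\Sineb$ path one uses the modulus of continuity of Brownian motion as $v(t)\to-\infty$. Constructing the coupling so that the terminal boundary conditions coincide, $\tilde z_n(1)=\tilde z(1)$ (which is possible because the two have the same law and can be realized from a common randomization), and combining the two tail estimates yields \eqref{eq:coupling_2}.

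The main obstacle is exactly this terminal window: as $t\to 1$ both paths run to the boundary $\partial\HH$, the increments cease to be small or nearly Gaussian, and hyperbolic distance is highly sensitive to Euclidean perturbations near $\partial\HH$, so even the crude $(\log\log n)^2$ control must be obtained carefully and compatibly with the strong coupling used on $[0,t_n]$. By contrast, the bulk estimate on $[0,t_n]$ is comparatively routine once the variance bookkeeping and the size profile of the increments are in place.
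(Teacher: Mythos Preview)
This proposition is quoted from \cite{BVBV_op} and is not proved in the present paper; there is no ``paper's own proof'' to compare against here. Your outline is, at a high level, consistent with the strategy of \cite{BVBV_op}: a strong (KMT/Sakhanenko-type) coupling of the discrete hyperbolic walk with hyperbolic Brownian motion on the bulk interval $[0,t_n]$, together with a separate crude diameter estimate on the terminal window $[t_n,1)$.

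Two places in your sketch would need real work before this becomes a proof. First, the increments $v_k+iw_k$ are neither i.i.d.\ nor uniformly sub-Gaussian: their variance grows like $(n-k)^{-1}$, so the standard KMT theorems do not apply directly, and one has to implement the strong approximation blockwise or in the time-changed clock and then track how the error propagates through the multiplicative structure of \eqref{eq:z_rec}. Your heuristic that the error is governed by the ``largest recent summand'' is the right intuition, but turning it into the precise bound $\log^{3-1/8}n/\sqrt{(1-t)n}$ uniformly in $t\le t_n$ is the technical heart of \cite{BVBV_op}. Second, your handling of the endpoint condition $\tilde z_n(1)=\tilde z(1)$ is too casual. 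It is true that both endpoints are marginally Cauchy (by rotational invariance of the Verblunsky coefficients, $b_n$ is uniform on $\partial\UU$), but you cannot simply couple the endpoints ``from a common randomization'' independently of the KMT coupling on $[0,t_n]$ and expect the two requirements to be compatible; the endpoint is a deterministic function of the same randomness that drives the bulk. In \cite{BVBV_op} this is resolved by building the coupling so that the last Verblunsky coefficient $\gamma_{n-1}$ is chosen as a function of the already-coupled bulk paths to force $\tilde z_n(1)=\tilde z(1)$, and then checking that this choice does not spoil the distributional identities. That compatibility step is exactly the delicate point you flag as the ``main obstacle'', and it deserves more than a parenthetical.
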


We know have all the ingredients to prove Proposition \ref{prop:sine_weights}.

\begin{proof}[Proof of Proposition \ref{prop:sine_weights}]
Consider the coupling of Proposition 
\ref{prop:coupling}, and denote the left spectral measure of $\Circop$ by $\mu_{\text{left},n}$. 
By Theorem \ref{thm:KillipNenciu} and Proposition \ref{prop:Dir_discrete}   we get that the support of $\mu_{\text{left},n}$ is given by $n \Lambda_n+n 2\pi n\Z$ where $\Lambda_n$ is distributed as the size $n$ circular beta ensemble. The main result of \cite{BVBV_op} is that in the coupling of Proposition 
\ref{prop:coupling} the set $n \Lambda_n+n 2\pi n\Z$ converges pointwise to the spectrum of $\Sineop$, which has the same distribution as the spectrum of $\btau_\beta$. 

The weights of $\mu_{\text{left},n}$ are given by the periodic extension of the values $2n (X_{1,n}, \dots, X_{n,n})$ where $(X_{1,n}, \dots, X_{n,n})$ has Dirichlet$(\beta/2, \dots, \beta/2)$ distribution and independent of $\Lambda_n$. The Dirichlet$(\beta/2, \dots, \beta/2)$ distribution can be obtained by normalizing $n$ independent Gamma$(\beta/2)$ random variables with their sum. Using the strong law of large numbers it now follows that the weights of $\mu_{\text{left},n}$ converge to an i.i.d.~sequence of Gamma random variables with shape parameter $\beta/2$ and expected value 2. If we show that
\begin{align}\label{eq:conv}
    \mu_{\text{left},n}\to \mu_{\text{left},\Sineop}\quad \text{a.s.~in the vague topology of measures}
\end{align}
then the statement of the proposition follows from Proposition \ref{prop:tau_sine}. To prove \eqref{eq:conv} we will use Lemma \ref{lem:spec_convergence}, but first we need to transform the operators so that the domain is an interval that is closed on the right. 

Note  that the operators $\Circop$ and $\Sineop$ share the same right condition $\tilde z(1)$. Consider the transformations $\rho$ and $\mathfrak{r}$, and the matrix $S$ introduced before Lemma \ref{lem:reversal}. Let $T_*=T_{-\tilde z(1)}$ defined via \eqref{def:hyprot}. Then by Proposition \ref{prop:tau_sine} the operator $\rho (S T_*)^{-1} \Sineop (S T_*) \rho$ has the same distribution as $\btau_\beta$, and with a minor abuse of notation we will use the notation $\btau_\beta$ for it. The driving path $z$ of $\btau_\beta$ satisfies $z=\rho \mathfrak{r} \mathcal{T}_*^{-1} \tilde z$.

Introduce the similarly transformed versions of the $\Circop$ operators
\[
\btau_{\beta,n}=\rho (S T_*)^{-1} \Circop (S T_*) \rho.
\]
These operators have driving paths $z_n=\rho \mathfrak{r} \mathcal{T}_*^{-1} \tilde z_n$.

We will show that the operators $\btau_\beta$ and $\btau_{\beta,n}$ satisfy the conditions of Lemma \ref{lem:spec_convergence}. Since 
\[
\mu_{\text{left},n}=\mu_{\text{right},\btau_{\beta,n}}, \qquad  \mu_{\text{left},\Sineop}=\mu_{\text{right},\btau_{\beta}},
\]
this  implies \eqref{eq:conv} and the proposition.

In \cite{BVBV_szeta} it was shown that in the coupling of Proposition \ref{prop:coupling} one has
$$\|\res \btau_n -\res \btau\|_{\textup{HS}}\to 0, \qquad  \ttr_{\btau_n}\to \ttr_{\btau},$$
with probability one. (See Theorem 47 and Proposition 48 and their proofs in \cite{BVBV_szeta}.)  Hence  we only need to show that in our coupling
\begin{align}\label{eq:a0_goal}
\int_0^1 |a_0(s)-a_{0,n}(s)|^2ds\to 0.
\end{align}
Note that in $\btau$ and $\btau_n$ the initial condition is $\uu_0=[1,0]$, hence by \eqref{eq:ac} we have
\[
|a_0(s)-a_{0,n}(s)|^2=\left|\frac{1}{\sqrt{\Im z(s)}}-\frac1{\sqrt{\Im z_n(s)}} \right|^2.
\]
An explicit computation shows that 
\[
\left|\frac{1}{\sqrt{y_1}}-\frac1{\sqrt{y_2}} \right|^2\le \frac{4}{y_1} \sinh(\frac12 d_{\HH}(y_1,y_2))^2 
\]
Since $\Im z(t), t\in (0,1]$ is distributed as $y_t$ from \eqref{eq:xy}, for any $\eps>0$ we can find a random $C<\infty$ so that 
\begin{align}\label{eq:a0_1}
|a_0(s)|^2=(\Im z(t))^{-1}\le C t^{2/\beta-\eps}.
\end{align}
From the definition of $z, z_n$ we also get 
\[
d_{\HH}(z(s),z_n(s))=d_{\HH}(\tilde z(1-s),\tilde z_n(1-s))
\]
We can now use the coupling bounds of Proposition \ref{prop:coupling} to get the following estimates (assuming that $n$ is sufficiently large). If $1-t_n\le s\le 1$ then
\begin{align}\label{eq:a0_2}
|a_0(s)-a_{0,n}(s)|^2\le  c \cdot C s^{2/\beta-\eps} \left(\frac{\log^{3-1/8}n}{\sqrt{s n}}\right)^2 
\end{align}
with an absolute constant $c$. If $0<s<1-t_n$ then
\begin{align}\label{eq:a0_3}
|a_0(1-t_n)-a_{0,n}(s)|^2\le  C' s^{2/\beta-\eps} e^{c (\log \log n)^2}
\end{align}
again with an absolute constant $c$. Choosing $0<\eps<2/\beta$ and using \eqref{eq:a0_1}-\eqref{eq:a0_3} we obtain \eqref{eq:a0_goal}, which finishes the proof. 
\end{proof}

We can now complete the proof of Theorem \ref{thm:Sine_Palm}.
\begin{proof}[Proof of Theorem \ref{thm:Sine_Palm}]
Consider $R, q$ and $\btau_\beta$ introduced in \eqref{eq:xy}-\eqref{tau_0}. For $r\in \R$ let  $\btau_{\beta,r}=\Dirop(R_{\cdot},\infty,r)$, and let  $\mu=\mu_{\textup{right},\btau_\beta}$ and $\mu_r=\mu_{\textup{right},\btau_{\beta,r}}$.
By Corollary \ref{cor:Sine_biased}, $\mu$ biased by the weight of zero is $\mu_\infty$, the right spectral measure of $\Dirop(R_{\cdot},\infty,\infty)$. It is known that the support of $\mu$ (the $\Sineb$ process) is translation invariant with intensity $\frac{1}{2\pi}$ (see \cite{BVBV}). By Proposition \ref{prop:sine_weights} the weights in $\mu$ are i.i.d.~with a finite expectation, and they are independent of the support. Hence $\mu$ satisfies the conditions of Proposition \ref{prop:Palm}, which means that $\mu_\infty$ is the Palm measure of $\mu$. Using again the fact that the weights of $\mu$ are i.i.d.~with finite expectation and they are independent of the support of $\mu$, together with Definition \ref{def:palm} we get the statement of Theorem \ref{thm:Sine_Palm}. 
\end{proof}

\section{Aleksandrov measures} \label{sec:Aleks}

Biasing a measure on the unit circle by the weight of one is closely related to the classical notion of an Aleksandrov measure. We review some basic facts about such measures, and refer the reader to  \cite{OPUC} volume 1, Section 3.2 and volume 2,  Section 10.2 for more background.

\begin{definition}\label{def:Aleks}
Let $\nu$ be a probability measure supported on $n$ distinct points on the unit circle. Denote its  Verblunsky coefficients by $\alpha_k, 0\le k\le n-1$. For $|\eta|=1$ the probability measure corresponding to the Verblunsky coefficients $\eta \alpha_k, 0\le k\le n-1$ is denoted by $\nu_\eta$, and it is called the {\bf Aleksandrov measure} corresponding to $\nu$ with Aleksandrov parameter $\eta$.
\end{definition}

\begin{lemma}\label{lem:Aleks_charge}
\begin{enumerate} The following statements hold for the Aleksandrov measure $\nu_\eta$.
    \item The path parameter $b_{\eta,0}, \dots, b_{\eta,n}$ of $\nu_\eta$ in $\UU$ is given by  $ \eta^{-1} b_0, \dots, \eta^{-1} b_n $.
    \item $\nu_{\eta}(1)>0$ if and only if $\eta=b_n$.
    \item The measure $\nu_\eta$ is continuous as a function of $\eta$.
\end{enumerate}
\end{lemma}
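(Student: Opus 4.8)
The plan is to prove the three statements of Lemma~\ref{lem:Aleks_charge} in order, each reducing to the recursion \eqref{eq:b_rec} for the path parameter in $\UU$ and the definition of the Aleksandrov measure.

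\textbf{Part 1.} The key observation is that multiplying all Verblunsky coefficients by a fixed $\eta$ with $|\eta|=1$ transforms the recursion for the path parameter in a transparent way. Recall from \eqref{eq:discrete_path_b} that $b_k=\mathcal P\prod_{j=0}^{k-1}\mat{1}{\bar\alpha_j}{\alpha_j}{1}\binom{0}{1}$. I would substitute $\alpha_j\mapsto\eta\alpha_j$, $\bar\alpha_j\mapsto\bar\eta\bar\alpha_j$, and check by a direct conjugation identity that
\[
\mat{1}{\bar\eta\bar\alpha_j}{\eta\alpha_j}{1}=\mat{\eta^{-1/2}}{0}{0}{\eta^{1/2}}\mat{1}{\bar\alpha_j}{\alpha_j}{1}\mat{\eta^{1/2}}{0}{0}{\eta^{-1/2}},
\]
so the product telescopes: the inner diagonal factors cancel and one is left with $\mat{\eta^{-1/2}}{0}{0}{\eta^{1/2}}\bigl(\prod\mat{1}{\bar\alpha_j}{\alpha_j}{1}\bigr)\mat{\eta^{1/2}}{0}{0}{\eta^{-1/2}}$. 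Applying $\mathcal P$ to this acting on $\binom{0}{1}$, the rightmost diagonal factor sends $\binom{0}{1}$ to $\eta^{-1/2}\binom{0}{1}$, which is projectively the same point, and the leftmost diagonal factor multiplies the ratio by $\eta^{-1}$. Hence $b_{\eta,k}=\eta^{-1}b_k$, as claimed. (Alternatively one can argue via the recursion \eqref{eq:b_rec} directly, checking that $\eta^{-1}b_k$ satisfies the same recursion with $\gamma_k$ replaced by the modified Verblunsky coefficients of $\nu_\eta$; but the matrix-product argument is cleaner and avoids computing how the $\gamma_k$ transform.)

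\textbf{Part 2.} By Proposition~\ref{prop:discrete_op_1} applied to the Dirac operator associated with $\nu_\eta$, the spectrum contains $0$ --- equivalently $z_{\eta,n}=\infty$, equivalently $b_{\eta,n}=\mathcal U(\infty)=1$ --- precisely when $\nu_\eta$ charges the point $e^{i\cdot 0}=1$. By Part~1, $b_{\eta,n}=\eta^{-1}b_n$, so $b_{\eta,n}=1$ iff $\eta=b_n$. (Note $|b_n|=1$ since $|\alpha_{n-1}|=1$ forces the last path point onto the boundary, so this is a legitimate value of the Aleksandrov parameter.) This gives statement~2.

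\textbf{Part 3.} Continuity of $\nu_\eta$ in $\eta$ follows from continuity of the Verblunsky map and its inverse, which was recorded in the excerpt: the map $\eta\mapsto(\eta\alpha_0,\dots,\eta\alpha_{n-1})$ is obviously continuous, and the map from Verblunsky coefficients back to probability measures on $n$ points of $\partial\UU$ is continuous (stated just after \eqref{eq:Szego1}). Composing gives statement~3; here one should interpret convergence of measures in the weak topology, under which the support points and weights depend continuously on the Verblunsky data. I expect Part~1 to be the main obstacle --- not because it is deep, but because one must be careful with the branch of $\eta^{1/2}$ and verify that the telescoping genuinely cancels all the interior diagonal factors rather than accumulating them; once the conjugation identity is set up correctly the rest is bookkeeping. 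Parts~2 and~3 are then immediate corollaries.
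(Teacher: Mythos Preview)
Your proof is correct and follows essentially the same approach as the paper. For Part~1 the paper uses the slightly cleaner conjugation
\[
\mat{\eta}{0}{0}{1}^{-1}\mat{1}{\bar\alpha}{\alpha}{1}\mat{\eta}{0}{0}{1}=\mat{1}{\bar\eta\bar\alpha}{\eta\alpha}{1},
\]
which avoids the square-root branch issue altogether (the rightmost diagonal factor fixes $\binom{0}{1}$ exactly, and the leftmost multiplies the ratio by $\eta^{-1}$); Parts~2 and~3 match the paper's reasoning.
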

\begin{proof}
We have
\[
\mat{\eta}{0}{0}{1}^{-1} \mat{1}{\bar \alpha}{\alpha}{1}\mat{\eta}{0}{0}{1}=\mat{1}{\bar \eta \bar \alpha}{\eta \alpha}{1},
\]
hence the first statement follows by \eqref{eq:discrete_path_b}.

By part (b) of Proposition \ref{prop:Dir_discrete} it follows that $\nu_\eta(1)>0$ exactly if $b_{\eta,n}=1$. This shows that the first statement of our lemma implies the second one. 

The third statement follows from the fact that a probability measure supported on $n$ distinct points depends continuously on its Verblunsky coefficients (see Theorem 1.5.6 in  Volume 1 of \cite{OPUC}). 
\end{proof}

If one averages the  Aleksandrov measure in its  parameter then the result is the uniform measure on the unit circle. 
\begin{theorem}[Theorem 10.2.2, volume 2,  \cite{OPUC}]\label{thm:spectral_average}
For a finitely supported discrete probability measure $\nu$ on the unit circle the averaged measure $\frac{1}{2\pi} \int_0^{2\pi} \nu_{e^{i \varphi}} d\varphi$ is the uniform measure. 
\end{theorem}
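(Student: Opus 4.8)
The plan is to pass from measures to Carathéodory functions and then average a geometric series. Recall that to a probability measure $\mu$ on $\partial\UU$ one associates its Schur function $f_\mu$, analytic on $\UU$ with $|f_\mu|\le 1$, and its Carathéodory function
\[
F_\mu(z)=\int_{\partial\UU}\frac{\zeta+z}{\zeta-z}\,d\mu(\zeta)=\frac{1+z f_\mu(z)}{1-z f_\mu(z)},\qquad z\in\UU,
\]
whose Taylor coefficients encode the moments: $F_\mu(z)=1+2\sum_{k\ge1}\hat\mu_k z^k$ with $\hat\mu_k=\int_{\partial\UU}\bar\zeta^{\,k}\,d\mu(\zeta)$. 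Since $\mu\mapsto(\hat\mu_k)_{k\ge1}$ is injective, it suffices to compute the moments of $\bar\nu:=\frac1{2\pi}\int_0^{2\pi}\nu_{e^{i\varphi}}\,d\varphi$ and show they all vanish for $k\ge1$, i.e.\ that $F_{\bar\nu}\equiv1$, which is the Carathéodory function of normalized Lebesgue measure.

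The first step is to record how the Aleksandrov map acts on Schur functions. Running the Schur algorithm $f_0=f_\nu$, $\alpha_k=f_k(0)$, $f_{k+1}(z)=z^{-1}\bigl(f_k(z)-\alpha_k\bigr)/\bigl(1-\bar\alpha_k f_k(z)\bigr)$, one checks by induction that replacing $f_0$ by $\eta f_0$ with $|\eta|=1$ replaces $f_k$ by $\eta f_k$, hence $\alpha_k$ by $\eta\alpha_k$; this is precisely the structure behind the Szeg\H o recursion \eqref{eq:Szego1}. Therefore $f_{\nu_\eta}=\eta f_\nu$ and, writing $w=w(z)=z f_\nu(z)$ (so $|w|\le|z|<1$ and $w$ vanishes at $0$),
\[
F_{\nu_\eta}(z)=\frac{1+\eta w}{1-\eta w}=1+2\sum_{m\ge1}\eta^{m}w^{m},
\]
the series converging uniformly in $\eta\in\partial\UU$ and locally uniformly in $z\in\UU$.

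Next I would average. By Lemma~\ref{lem:Aleks_charge}(3) the map $\eta\mapsto\nu_\eta$ is weak-$*$ continuous, so $\bar\nu$ is a well-defined probability measure and, applying Fubini to the bounded continuous integrand $\zeta\mapsto(\zeta+z)/(\zeta-z)$, we get $F_{\bar\nu}(z)=\frac1{2\pi}\int_0^{2\pi}F_{\nu_{e^{i\varphi}}}(z)\,d\varphi$. Interchanging the angular integral with the uniformly convergent sum yields
\[
F_{\bar\nu}(z)=1+2\sum_{m\ge1}\Bigl(\tfrac1{2\pi}\int_0^{2\pi}e^{im\varphi}\,d\varphi\Bigr)w(z)^{m}=1,
\]
since every angular integral vanishes. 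Hence $F_{\bar\nu}\equiv1$ and injectivity of $\mu\mapsto F_\mu$ forces $\bar\nu=\frac1{2\pi}\,d\theta$. Equivalently, since the $z^k$-coefficient of $F_{\nu_\eta}$ equals $2\sum_{m=1}^{k}\eta^{m}[z^k]w^m$, a polynomial in $\eta$ with no constant term, each moment $\hat{\bar\nu}_k$ with $k\ge1$ is zero.

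There is no serious obstacle: the only points needing care are the normalization conventions (for $F_\mu$ and for the Schur/Verblunsky correspondence) and the interchange of summation and integration, which is immediate from $|w|<1$. The single genuine input is the identity $f_{\nu_\eta}=\eta f_\nu$. If one preferred to bypass Schur functions entirely, one could instead use the path-parameter identity $b_{\eta,k}=\eta^{-1}b_k$ of Lemma~\ref{lem:Aleks_charge}(1) together with the explicit expression of the weights of a finitely supported measure in terms of its path parameters, but the Carathéodory-function route is shorter and is the classical one.
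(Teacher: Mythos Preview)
The paper does not prove this theorem; it merely cites it as Theorem~10.2.2 in volume~2 of \cite{OPUC}. So there is no ``paper's own proof'' to compare against.

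Your argument is correct and is essentially the classical route to Aleksandrov's spectral averaging that one finds in Simon's book: pass to Carath\'eodory/Schur functions, use the identity $f_{\nu_\eta}=\eta f_\nu$ (which you verify by running the Schur algorithm), expand $F_{\nu_\eta}(z)=(1+\eta w)/(1-\eta w)$ as a geometric series in $\eta w$ with $w=zf_\nu(z)$, and average term-by-term. The only substantive checks are (i) $|w|<1$ on $\UU$, which holds since $|f_\nu|\le 1$ and $|z|<1$, and (ii) the Fubini step, which is harmless because the Poisson-type kernel $(\zeta+z)/(\zeta-z)$ is bounded on $\partial\UU$ for fixed $z\in\UU$ and each $\nu_\eta$ is a probability measure. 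Your invocation of Lemma~\ref{lem:Aleks_charge}(3) for the weak-$*$ continuity of $\eta\mapsto\nu_\eta$ is appropriate to justify that $\bar\nu$ is a well-defined probability measure. Nothing is missing.
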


The next theorem ties Aleksandrov measures to conditioning. See Section \ref{sec:biasing}
for the definition of conditioning by the weight of a point, and recall the path parameter $b_0,\ldots, b_n$ of a measure $\mu$ from Section \ref{sec:finite}.
\begin{proposition}\label{prop:Aleks_cont} Let $a_k$ be a sequence of points on the unit circle, let $\mu$ be a measure supported on $n$ points with $b_n$ defined in Definition \ref{def:path}, and let $\mu_{a_k}$ denote the Aleksandrov measure of $\mu$ with parameter $a_k$. Then the following are equivalent 
\begin{enumerate}
\item $a_k\to b_n$

\item $\mu_{a_k}\to \mu_{b_n}$

\item For every neighborhood $A$ of 1, $\mu_{a_k}(A)>0$ for all large enough $k$.  
\end{enumerate}
\end{proposition}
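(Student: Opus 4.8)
The plan is to prove the cycle of implications $(1)\Rightarrow(2)\Rightarrow(3)\Rightarrow(1)$, using the correspondence between Aleksandrov measures, their path parameters in $\UU$, and continuity of the measure-to-Verblunsky map. First, $(1)\Rightarrow(2)$: by Definition \ref{def:Aleks}, the Verblunsky coefficients of $\mu_{a_k}$ are $a_k \alpha_j$, $0\le j\le n-1$, where $\alpha_j$ are the Verblunsky coefficients of $\mu$. Since $a_k\to b_n$, the coefficient sequences converge, and by Theorem 1.5.6 of volume 1 of \cite{OPUC} (continuity of the inverse of the measure-to-Verblunsky map on measures supported on $n$ points, restated in Lemma \ref{lem:Aleks_charge}(3)) we get $\mu_{a_k}\to \mu_{b_n}$ weakly. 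This step is essentially immediate.

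Next, $(2)\Rightarrow(3)$: by Lemma \ref{lem:Aleks_charge}(2), since $\mu_{b_n}$ has path parameter $b_n^{-1} b_n = 1$ in $\UU$ at the last index (equivalently, its last path parameter in $\UU$ equals $1$), we have $\mu_{b_n}(\{1\})>0$. Weak convergence $\mu_{a_k}\to\mu_{b_n}$ on the compact circle then forces $\liminf_k \mu_{a_k}(A)\ge \mu_{b_n}(A)\ge \mu_{b_n}(\{1\})>0$ for any open neighborhood $A$ of $1$, using the portmanteau theorem (lower semicontinuity of mass on open sets). Hence $\mu_{a_k}(A)>0$ for all large $k$.

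The main obstacle is $(3)\Rightarrow(1)$, where one must rule out that the $a_k$ cluster away from $b_n$ while still charging every neighborhood of $1$. I would argue by contradiction: suppose $a_k\not\to b_n$; since the circle is compact, pass to a subsequence with $a_k\to a_\infty\neq b_n$. By the already-established implication $(1)\Rightarrow(2)$ applied along this subsequence, $\mu_{a_k}\to \mu_{a_\infty}$. By Lemma \ref{lem:Aleks_charge}(2), $\mu_{a_\infty}(\{1\})=0$ since $a_\infty\neq b_n$; in fact $\mu_{a_\infty}$ is supported on $n$ points none of which is $1$, so there is an open neighborhood $A_0$ of $1$ with $\mu_{a_\infty}(\overline{A_0})=0$. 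Since the supports of $\mu_{a_k}$ vary continuously (each is the zero set of the $n$-th orthogonal polynomial, whose coefficients depend continuously on the Verblunsky coefficients), for all large $k$ along the subsequence the support of $\mu_{a_k}$ avoids a fixed smaller neighborhood $A_1\subset A_0$ of $1$, so $\mu_{a_k}(A_1)=0$, contradicting $(3)$. The only delicate point is justifying that the supports stay uniformly away from $1$; this follows because $\mu_{a_\infty}$ has exactly $n$ points of support, all distinct from $1$, and on measures with $n$-point support the support map is continuous (the roots of $\Phi_n$ depend continuously on $\alpha_0,\dots,\alpha_{n-1}$), so for $k$ large the $n$ atoms of $\mu_{a_k}$ lie in small disjoint neighborhoods of the $n$ atoms of $\mu_{a_\infty}$, none of which meets $A_1$. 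This completes the cycle and hence the proof.
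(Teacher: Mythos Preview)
Your proof is correct and follows essentially the same cycle $(1)\Rightarrow(2)\Rightarrow(3)\Rightarrow(1)$ as the paper, invoking continuity of $\mu_a$ in $a$ (Lemma~\ref{lem:Aleks_charge}(3)) for the first implication, the fact that $\mu_{b_n}(\{1\})>0$ (Lemma~\ref{lem:Aleks_charge}(2)) for the second, and a compactness/subsequence argument for the third. The only cosmetic difference is that you spell out the portmanteau and support-continuity details more explicitly than the paper does.
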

\begin{proof}
The measure $\mu$ is continuous as a function of its Verblunsky parameters, see Theorem 1.5.6 in volume 1, \cite{OPUC}, hence $\mu_a$ is continuous in $a$. This immediately shows that 1 implies 2. Since $\mu_{b_n}(1)>0$ by Lemma \ref{lem:Aleks_charge}, 2 implies 3.

Now assume 3, and take a converging subsequence of $a_{k_m}\to \eta$. The continuity of $\mu_x$ in $x$ implies that   $\mu_{a_k}\to \mu_{\eta}$, in particular the support of $\mu_{a_k}$ converges to the support of $\mu_{\eta}$. If $\eta\neq b_n$ then the support of $\mu_\eta$ does not contain 1, and we could find a neighborhood $A$ of 1 so that for large enough $k$ we have $\mu_{a_k}(A)=0$, contradicting 3. This shows that any subsequential limit of $a_k$ is equal to $b_n$, proving 1. 
\end{proof}

\begin{corollary}\label{cor:cond}
Fix $\mu$ supported on $n$ points on the unit circle, and let $U$ be any random variable so that $b_n\in \operatorname{supp} \operatorname{law} U$. Then $\mu_U$ conditioned on the weight of 1 is a deterministic measure equal to  $\mu_{b_n}$.
\end{corollary}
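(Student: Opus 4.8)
The plan is to read off Corollary~\ref{cor:cond} directly from Proposition~\ref{prop:Aleks_cont} together with the general biasing machinery of Section~\ref{sec:biasing}. Fix $\mu$ supported on $n$ points with path parameter $b_0,\dots,b_n$ in $\UU$, and let $U$ be a random variable on $\partial\UU$ with $b_n\in\operatorname{supp}\operatorname{law}U$. We want to compute $\mu_U$ biased by the weight of $1$ in the sense of Definition~\ref{def:biased} (the circle version, see Remark~\ref{rem:circle}), i.e. the $\eps\to 0$ distributional limit of $\mu_U$ biased by the weight of the arc $A_\eps=\{e^{i\theta}:\theta\in(-\eps,\eps)\}$.

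First I would record the elementary point that $\mu_U(A_\eps)>0$ forces, in the limit, $U$ to be $b_n$. Concretely, the Radon--Nikodym factor $\mu_U(A_\eps)/E[\mu_U(A_\eps)]$ is supported (in $U$) on the event $\{\mu_U(A_\eps)>0\}$, and by the equivalence (1)$\Leftrightarrow$(3) of Proposition~\ref{prop:Aleks_cont} applied along sequences $\eps_k\downarrow 0$, if $\mu_{a_k}(A_{\eps_k})>0$ for all large $k$ then $a_k\to b_n$. Hence, under the $\eps$-biased law, $U$ converges in probability to the deterministic value $b_n$ as $\eps\to 0$. Since $\mu_x$ is continuous in $x$ (Lemma~\ref{lem:Aleks_charge}(3), or equivalently the continuity in Proposition~\ref{prop:Aleks_cont}(1)$\Rightarrow$(2)), the measure $\mu_U$ under the biased law converges in distribution to $\mu_{b_n}$, a deterministic measure. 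One still has to check the limit is a genuine probability measure and that the normalizing constants $E[\mu_U(A_\eps)]$ behave (they are $\le 1$ and, because $b_n$ is in the support of $\operatorname{law}U$ and $\mu_{b_n}(1)>0$ by Lemma~\ref{lem:Aleks_charge}(2), they are bounded below by a positive quantity for a positive-probability set of $U$ near $b_n$, so the biasing is well defined); this makes the weak limit identification legitimate.

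The cleanest way to organize the argument is: (i) note $\mu_U$ biased by $\mu_U(A_\eps)$ is a well-defined probability law for each small $\eps$ since $E[\mu_U(A_\eps)]>0$; (ii) show $\{\mu_U(A_\eps)>0\}\subset\{d(U,b_n)<\delta(\eps)\}$ with $\delta(\eps)\to 0$, using Proposition~\ref{prop:Aleks_cont}; (iii) conclude that under the biased law $U\to b_n$ in probability; (iv) invoke continuity of $x\mapsto\mu_x$ and the continuous mapping theorem to get $\mu_U\to\mu_{b_n}$ in distribution. Steps (i)--(iii) are where the real content sits; (iv) is routine given the continuity statement already in the excerpt.

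The main obstacle I anticipate is making step (ii)--(iii) quantitative enough to pass to the limit: Proposition~\ref{prop:Aleks_cont} is stated as an equivalence of three conditions along a sequence, not as a uniform modulus of continuity, so one must argue by contradiction with subsequences rather than by an explicit $\delta(\eps)$. Precisely, if $U$ did \emph{not} converge to $b_n$ under the biased law, one could extract a subsequence $\eps_k\downarrow 0$ and a positive-probability event on which $d(U,b_n)$ stays bounded away from $0$ while $\mu_U(A_{\eps_k})>0$; applying (3)$\Rightarrow$(1) of Proposition~\ref{prop:Aleks_cont} pathwise on that event yields $U\to b_n$, a contradiction. A secondary point requiring a little care is tightness/uniform integrability: since $\mu_U$ is a probability measure on the compact circle, tightness is automatic, so convergence in distribution of $\mu_U$ and continuity of $x\mapsto\mu_x$ suffice, and no additional moment control is needed. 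Putting these together gives that $\mu_U$ conditioned on the weight of $1$ is the deterministic measure $\mu_{b_n}$, which is the assertion of the corollary.
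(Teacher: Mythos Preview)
Your proposal is correct and follows essentially the same route as the paper: both first show $E[\mu_U(A_\eps)]>0$ using continuity of $u\mapsto\mu_u$ together with $b_n\in\operatorname{supp}\operatorname{law}U$ and $\mu_{b_n}(1)>0$, then use Proposition~\ref{prop:Aleks_cont} to force the biased law of $U$ to concentrate at $b_n$, and conclude via continuity that $\mu_U\to\mu_{b_n}$. Your steps (ii)--(iii) spell out more carefully the deterministic compactness argument (the inclusion $\{u:\mu_u(A_\eps)>0\}\subset B_{\delta(\eps)}(b_n)$) that the paper compresses into the single sentence ``the sequence $\mu_{U_k}$ satisfies Proposition~\ref{prop:Aleks_cont}(3)''; your ``pathwise on that event'' phrasing is slightly loose since the biased laws for different $\eps$ are not coupled, but the subsequence-and-compactness version you sketch is exactly the right fix and is implicit in the paper as well.
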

\begin{proof} 
For $0<\eps<1$ let $A_\eps$ be the arc of the unit circle corresponding to the angles in $(-\pi\eps,\pi\eps)$.

First we claim that $E\mu_U(A_{\eps})>0$ for all $\eps$. Let $\varphi$ be a continuous  function with values in $[0,1]$ with $\varphi(1)=1$ supported on $A_\eps$. If $u\to b_n$ then $\mu_u\to \mu_{b_n}$, so the functional $u\mapsto \int \varphi d\mu_{u}$ is continuous. Let $B$ the set of $u$   so that $\int \varphi d\mu_{u}>\int \varphi d\mu_{b_n}/2=:a/2$. Note that $a \ge \mu_{b_n}(1)>0$. By continuity, $B$ is an open neighborhood of $b_n$, so $P(U\in B)>0$, and on this event, 
$$
\mu_U(A_{\eps_k})>\int \varphi d\mu_{U}>a/2
$$
so $E\mu_U(A_{\eps_k})>a P(U\in B)/2>0$. 

Let $\mu_{U_k}$ be distributed as $\mu_U$ biased by $\mu_U(A_{\eps_k})$; this exists since $E\mu_U(A_{\eps_k})>0$. 

The sequence  $\mu_{U_k}$ satisfies Proposition \ref{prop:Aleks_cont} (3), hence by the Proposition, $U_k\to b_n$ and $\mu_{U_k}\to \mu_{b_n}$ in law. 
\end{proof}

\begin{corollary}[Conditioning on a point at 1.]\label{cor:cond_1}
Let $\mu$ be a random measure supported on $n$ points on the unit circle so that for every $|u|=1$ the Aleksandrov measure $\mu_u$ has the same distribution as $\mu$. Then $\mu$ conditioned on the weight of 1 has law $\mu_{b_n}$.
Moreover, if $\mu$ is rotationally invariant, then $\mu_{b_n}$ is the Palm measure of $\mu$.

\end{corollary}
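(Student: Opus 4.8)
The plan is to reduce the statement for random $\mu$ to the fixed-$\mu$ version already proved in Corollary \ref{cor:cond} by randomizing the Aleksandrov parameter, and then to identify the result with the Palm measure via Proposition \ref{prop:Palm}. Let $U$ be uniform on $\partial\UU$, independent of $\mu$. Aleksandrov transforms compose multiplicatively in the parameter, $(\nu_\eta)_\zeta=\nu_{\eta\zeta}$ (cf.\ Definition \ref{def:Aleks}: the Verblunsky coefficients get multiplied by $\eta$ and then by $\zeta$), so since $U$ is independent of $\mu$ and $\mu_u\ed\mu$ for every fixed $u$, mixing over $U$ gives $\mu_U\ed\mu$ as random measures (using Lemma \ref{lem:Aleks_charge}(3) for the measurability of $u\mapsto\mu_u$). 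Moreover, Lemma \ref{lem:Aleks_charge}(1) gives $b_n(\mu_U)=U^{-1}b_n(\mu)$, hence $(\mu_U)_{b_n(\mu_U)}=\mu_{U\cdot U^{-1}b_n(\mu)}=\mu_{b_n(\mu)}$; that is, the unique member of the Aleksandrov orbit of $\mu_U$ charging $1$ (Lemma \ref{lem:Aleks_charge}(2)) is exactly $\mu_{b_n(\mu)}$.

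Next I would condition $\mu_U$ on the weight of $1$. Fixing a realization of $\mu$, the variable $U$ is still uniform, so $b_n(\mu)\in\operatorname{supp}\operatorname{law}(U)$ and Corollary \ref{cor:cond} applies: conditionally on $\mu$, the measure $\mu_U$ conditioned on the weight of $1$ is the deterministic measure $\mu_{b_n(\mu)}$. Writing $A_\eps$ for the arc of angular width $2\pi\eps$ about $1$ and unwinding Definition \ref{def:biased}, this means that for every bounded continuous test function $F$,
\[
\frac{1}{\eps}\,E_U\!\big[F(\mu_U)\,\mu_U(A_\eps)\big]\ \longrightarrow\ F\big(\mu_{b_n(\mu)}\big)\qquad(\eps\to 0),
\]
where Theorem \ref{thm:spectral_average} is used to evaluate the normalization $E_U[\mu_U(A_\eps)]=\eps$. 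The same averaging identity gives $E[\mu_U(A_\eps)]=\eps$ unconditionally, so if $\nu_\eps$ denotes $\mu_U$ biased by $\mu_U(A_\eps)$ then
\[
E[F(\nu_\eps)]\ =\ \frac{1}{\eps}\,E\!\big[F(\mu_U)\,\mu_U(A_\eps)\big]\ =\ E_\mu\!\Big[\tfrac{1}{\eps}\,E_U\!\big[F(\mu_U)\,\mu_U(A_\eps)\big]\Big]\ \longrightarrow\ E_\mu\!\big[F(\mu_{b_n(\mu)})\big]
\]
by dominated convergence, the integrand being bounded by $\|F\|_\infty$. Hence $\mu_U$ conditioned on the weight of $1$ has the law of $\mu_{b_n}$.

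To conclude, note that conditioning on the weight of a point depends only on the law of the random measure (and is well defined here, since $E\mu(A_\eps)=E\mu_U(A_\eps)=\eps>0$), so by $\mu_U\ed\mu$ the same statement holds with $\mu$ in place of $\mu_U$: $\mu$ conditioned on the weight of $1$ has the law of $\mu_{b_n}$, which is the first assertion. For the second, suppose in addition that $\mu$ is rotationally invariant; being a.s.\ a probability measure it has finite expected total weight, so by Remark \ref{rem:circle} and Proposition \ref{prop:Palm} the random measure $\mu$ biased by the weight of the point $1$ exists and equals the Palm measure of $\mu$. Since biasing by the weight of a point is the same notion as conditioning on it (Definition \ref{def:biased}), the Palm measure of $\mu$ has the law of $\mu_{b_n}$.

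The one delicate point is the passage from the pointwise-in-$\mu$ conclusion of Corollary \ref{cor:cond} to the unconditional statement: this goes through cleanly precisely because Theorem \ref{thm:spectral_average} forces the normalizing constant $E[\mu_U(A_\eps)\mid\mu]$ to be the \emph{deterministic} number $\eps$, which makes the dominated-convergence bound $\|F\|_\infty$ available. Everything else is bookkeeping with the Aleksandrov transform.
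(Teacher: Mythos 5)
Your proof is correct and follows essentially the same route as the paper's: randomize the Aleksandrov parameter with a uniform $U$ independent of $\mu$, apply Corollary \ref{cor:cond} conditionally on $\mu$, use Theorem \ref{thm:spectral_average} to make the normalizing constant $E[\mu_U(A_\eps)\mid\mu]=\eps$ deterministic, and pass to the unconditional limit by dominated convergence. You usefully make explicit the observation $\mu_U\ed\mu$, which the paper leaves implicit when translating the conclusion from $\mu_U$ back to $\mu$.
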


\begin{proof}
Let $U$ be uniform. Consider the joint distribution $(X,\mu_U)$ where $X$ is a point on the unit circle with distribution given by $\mu_U$. Let $A_\eps$ be the event that $X$ is in the arc corresponding to $(-\pi \eps, \pi \eps)$

Take a bounded continuous functional $f$ on the space of probability measures times the unit circle. By the last claim of Corollary \ref{cor:cond}  we have
$$
Y_\eps:=\frac{E(f(\mu_U,U)\,;\,A_\eps\,|\,\mu)}
{P(A_\eps\,|\,\mu)}
\to f(\mu_{b_n},b_n)\\ \qquad \mbox{a.s.}
$$
Note that $|Y_\eps|\le \sup |f|$ a.s.  By spectral avereging, Theorem \ref{thm:spectral_average}, $P(A_\eps\,|\,\mu)=\eps$. Therefore 
$$
\eps^{-1}E(f(\mu_U,U)\,;\,A_\eps\,|\,\mu)\to f(\mu_{b_n},b_n)\\ \qquad \mbox{a.s.}
$$
Taking further expectations and using the bounded convergence theorem gives the required convergence  
\[
EY_\eps=\eps^{-1}E(f(\mu_U,U)\,;\,A_\eps)\to E f(\mu_{b_n},b_n).
\]
Thus we see that the law of $\mu_U$ given $A_\eps$ converges to the law of $\mu_{b_n}$. This limit is the definition of $\mu_U$ biased by the weight of 1. 

By Proposition \ref{prop:Palm} and Remark \ref{rem:circle}, if $\mu$ is rotationally invariant, then $\mu_{b_n}$ is the Palm measure of $\mu$.
\end{proof}

\section{Verblunski coefficients after biasing by the weight of one}

In this section we describe how random Verblunski coefficients change after conditioning on the weight of one. We have already seen that this is related to Aleksandrov measures, but the connection uses the path parameter $b_k, 0\le k\le n$, and the dependence can be complicated. In many cases, we still have simple formulas. For this, we again use the connection between the  modified Verblunski coefficients and the affine group of isometries of the hyperbolic plane introduced in Section \ref{sec:finite}.  

Recall that the affine group of linear fractional transformations of the unit disk fixing the point $1$ can be parametrized with the pre-image $\gamma$ of 0. The linear fractional transformation $\mathcal A_{\gamma,\UU}$ can be represented as a $2\times 2$ matrix $A_{\gamma, \UU}$ via \eqref{eq:Adef}.

The inverse of $A_{\gamma,\UU}$ corresponds to a linear fractional transformation whose parameter is denoted by $\gamma^\iota$:
\begin{align}\label{eq:iota}
     \gamma^\iota=-\gamma\frac{1-\bar \gamma}{1-\gamma},  \qquad A_{\gamma, \UU}^{-1}=A_{\gamma^\iota,\UU}.
\end{align}
Note that $\gamma\to \gamma^\iota$ is not an analytic function: it does not change the modulus, 
$
|\gamma|=|\gamma^\iota|,
$
but it is not a rotation around 0.
From the definition \eqref{eq:iota} we have
\begin{align}\label{eq:A_inverse}
\mathcal A_{\gamma, \UU}(\gamma)=0,\quad  \mbox{ i.e.  } ,\quad \mathcal A_{\gamma, \UU}^{-1}(0)=\mathcal A_{\gamma^\iota}(0)=\gamma.
\end{align}

Using  the $\gamma\to \gamma^{\iota}$ map the connection between the regular and modified Verblunski coefficients \eqref{eq:gamma_alpha} simplifies to 
\[
\gamma_0=\bar \alpha_0, \qquad -\frac{\gamma_k}{\gamma_{k-1}^\iota}=\frac{\bar \alpha_k}{\bar \alpha_{k-1}}, \quad 1\le k\le n-1.
\]
For $u\in \partial \UU,\gamma\in \UU$ define the Poisson kernel as
$$
\Poi(\gamma,u)=\Re \frac{u+\gamma}{u-\gamma}=\frac{1-|\gamma|^2}{|u-\gamma|^2}.
$$
The normalization is so that if $\Theta$ is uniform random on $\partial \UU$, then the expected value of $\Poi(\gamma,\Theta)$ is 1. We have
\begin{equation}
    \label{eq:Ay}
\Poi(\gamma,1)= \frac{1-|\gamma|^2}{|1-\gamma|^2}=\det A_{\gamma, \UU}=\mathcal{A}_{\gamma, \UU}'(1).
\end{equation}

Our next result shows how the $\gamma\to \gamma^{\iota}$ operation shows up naturally when considering the reversed (and rescaled) version of a path corresponding to a discrete probability measure on $\partial \UU$.   
\begin{lemma}\label{lem:reverse}
Let $\nu$ be a probability measure supported on $n$ distinct points on the unit circle. Denote the modified  Verblunski coefficients of $\nu$ by $\gamma_k, 0\le k\le n-1$, and consider the path $b_k, 0\le k\le n$ defined by \eqref{eq:discrete_path_b} in Definition \ref{def:path}. 
For $0\le k\le n-1$ define the reversed path parameter as 
\begin{align}
b'_k=
\cA_{b_{n-1}, \UU}(b_{n-k-1}) \qquad 0\le k\le n-1.
\end{align}
Then $b_0', \dots, b_{n-1}'$ are the first $n$ elements of the path produced by a sequence of modified Verblunsky coefficients starting with $\gamma^\iota_{n-2},\ldots, \gamma^\iota_0$. 
\end{lemma}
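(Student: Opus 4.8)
The plan is to track how the path transformation $b'_k = \mathcal A_{b_{n-1},\UU}(b_{n-k-1})$ interacts with the recursion \eqref{eq:bk_cA}, which expresses $b_k$ as the composition $\cA_{\gamma_0,\UU}^{-1}\circ\cdots\circ\cA_{\gamma_{k-1},\UU}^{-1}(0)$. First I would rewrite the defining relation using \eqref{eq:A_inverse}, which says $\cA_{\gamma,\UU}^{-1}(0)=\cA_{\gamma^\iota,\UU}(0)$, so that
\[
b_{n-k-1}=\cA_{\gamma_0^\iota,\UU}\circ\cdots\circ\cA_{\gamma_{n-k-2}^\iota,\UU}(0)\cdot\text{-style composition, i.e. } b_j=\cA_{\gamma_0,\UU}^{-1}\cdots\cA_{\gamma_{j-1},\UU}^{-1}(0).
\]
The key algebraic fact I need is that the affine group of the disk fixing $1$ is a group, so that the composition $\cA_{\gamma_0,\UU}^{-1}\cdots\cA_{\gamma_{n-1},\UU}^{-1}$ equals $\cA_{b_n,\UU}^{-1}$ up to the right normalization — this is essentially \eqref{eq:Ab_rec}, namely $A_{b_k,\UU}=A_{\gamma_{k-1},\UU}\cdots A_{\gamma_0,\UU}$. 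From this, $\cA_{b_{n-1},\UU}\circ\cA_{b_{n-k-1},\UU}^{-1}$ telescopes to a composition involving only $\gamma_{n-k-1},\ldots,\gamma_{n-2}$.

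Next I would compute $\cA_{b_{n-1},\UU}(b_{n-k-1})$ directly. Since $A_{b_{n-1},\UU}=A_{\gamma_{n-2},\UU}\cdots A_{\gamma_0,\UU}$ and $A_{b_{n-k-1},\UU}=A_{\gamma_{n-k-2},\UU}\cdots A_{\gamma_0,\UU}$, and $b_{n-k-1}=A_{b_{n-k-1},\UU}^{-1}(0)$, we get
\[
b'_k=\cA_{b_{n-1},\UU}\bigl(\cA_{b_{n-k-1},\UU}^{-1}(0)\bigr)
=\bigl(A_{\gamma_{n-2},\UU}\cdots A_{\gamma_{n-k-1},\UU}\bigr)(0).
\]
Now I want to show that the right-hand side is the $k$-th term of the path generated by the modified Verblunsky sequence $\gamma_{n-2}^\iota,\gamma_{n-3}^\iota,\ldots,\gamma_0^\iota$. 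By \eqref{eq:bk_cA} applied to this reversed sequence (call its terms $\beta_0=\gamma_{n-2}^\iota,\ \beta_1=\gamma_{n-3}^\iota,\ldots$), the $k$-th path term is $\cA_{\beta_0,\UU}^{-1}\circ\cdots\circ\cA_{\beta_{k-1},\UU}^{-1}(0)$. Using \eqref{eq:iota}, $A_{\beta_j,\UU}^{-1}=A_{(\gamma_{n-j-2}^\iota)^\iota,\UU}=A_{\gamma_{n-j-2},\UU}$, since $\iota$ is an involution (this follows from \eqref{eq:iota}: $A_{\gamma,\UU}^{-1}=A_{\gamma^\iota,\UU}$ implies $A_{\gamma^\iota,\UU}^{-1}=A_{\gamma,\UU}$, hence $(\gamma^\iota)^\iota=\gamma$). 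Therefore the $k$-th path term equals $A_{\gamma_{n-2},\UU}A_{\gamma_{n-3},\UU}\cdots A_{\gamma_{n-k-1},\UU}(0)$, which matches $b'_k$ exactly.

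The main obstacle, and the step requiring care, is the bookkeeping at the \emph{endpoints}: the index $k=n-1$ involves $\gamma_{n-1}$, which lies on $\partial\UU$ rather than in $\UU$, so $\cA_{\gamma_{n-1},\UU}$ is degenerate and \eqref{eq:bk_cA} is defined only via the limiting convention. I would handle this by first proving the identity for $0\le k\le n-1$ using only $\gamma_0,\ldots,\gamma_{n-2}\in\UU$ (which is all that appears in the composition $A_{\gamma_{n-2},\UU}\cdots A_{\gamma_{n-k-1},\UU}$ for $k\le n-1$), so that all transformations in sight are genuine disk automorphisms and no limiting argument is needed — the statement of the lemma only claims the first $n$ path terms $b'_0,\ldots,b'_{n-1}$ and only references $\gamma_{n-2}^\iota,\ldots,\gamma_0^\iota$, so this is exactly the right range. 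A secondary point to verify is the base case $b'_0=\cA_{b_{n-1},\UU}(b_{n-1})=0$, which holds by \eqref{eq:A_inverse}, matching the convention $b_0=0$ in Definition \ref{def:path}; everything else is the associativity computation above.
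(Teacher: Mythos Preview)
Your proposal is correct and follows essentially the same route as the paper's proof: both arguments use \eqref{eq:Ab_rec} to write $\cA_{b_{n-1},\UU}$ and $\cA_{b_{n-k-1},\UU}$ as compositions of the $\cA_{\gamma_j,\UU}$, telescope to obtain $b'_k=\cA_{\gamma_{n-2},\UU}\circ\cdots\circ\cA_{\gamma_{n-k-1},\UU}(0)$, and then invoke $A_{\gamma,\UU}^{-1}=A_{\gamma^\iota,\UU}$ from \eqref{eq:iota} together with \eqref{eq:bk_cA} to recognize this as the $k$-th path element for the sequence $\gamma_{n-2}^\iota,\ldots,\gamma_0^\iota$. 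Your additional remarks on the base case $b'_0=0$ and on why no limiting argument for $\gamma_{n-1}\in\partial\UU$ is needed are accurate and slightly more explicit than the paper's version.
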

\begin{proof}
From  \eqref{eq:Ab_rec} we have 
\[
\cA_{b_k, \UU}=\cA_{\gamma_{k-1}, \UU}\circ \cdots\circ  \cA_{\gamma_{0}, \UU}.
\]
Hence, by \eqref{eq:bk_cA} we get
\begin{align*}
b_k'=\cA_{\gamma_{n-2}}\circ \cdots  \circ \cA_{\gamma_{0}}(\cA_{\gamma_0}^{-1}\circ \cdots\circ  \cA_{\gamma_{n-k-2}}^{-1}(0))&=\cA_{\gamma_{n-2}}\circ \cdots  \circ \cA_{\gamma_{n-k-1}}(0)\\
&=\cA_{\gamma_{n-2}^\iota}^{-1} \circ \cdots \circ  \cA_{\gamma_{n-k-1}^\iota}^{-1}(0).
\end{align*}
Equation \eqref{eq:bk_cA} now shows that $b_0', \dots, b_{n-1}'$ are the first $n$ elements of the path produced by a sequence of modified Verblunsky coefficients starting with $\gamma^\iota_{n-2},\ldots, \gamma^\iota_0$.  
\end{proof}

The following lemma describes the distribution of $\alpha^\iota$ if $\alpha$ has rotationally invariant distribution on $\UU$. 
\begin{lemma} \label{lem:iota}
Assume that $\alpha \in \UU$ is random with a rotationally invariant distribution. Then  the law of $\alpha^\iota$ is absolutely continuous with respect to the law of $\alpha$ with density $\Poi(\alpha, 1)$.
\end{lemma}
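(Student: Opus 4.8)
\emph{Proof plan.} The idea is to slice $\UU$ into circles of constant radius, on each of which the (a priori non-holomorphic) map $\gamma\mapsto\gamma^\iota$ becomes an honest Möbius transformation whose Jacobian along the circle is exactly the Poisson kernel $\Poi(\cdot,1)$.

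\emph{Step 1: reduction to a fixed circle.} First I would disintegrate the law of $\alpha$ over its modulus. By rotational invariance, conditionally on $|\alpha|=r$ the variable $\alpha$ is uniform on the circle $C_r=\{|z|=r\}$, and since $|\gamma^\iota|=|\gamma|$ by \eqref{eq:iota}, it suffices to prove: for each $r\in[0,1)$, the pushforward under $\gamma\mapsto\gamma^\iota$ of the uniform probability measure on $C_r$ has Radon--Nikodym derivative $w\mapsto\Poi(w,1)$ with respect to the uniform probability measure on $C_r$. A possible atom at $0$ is trivial, since $0^\iota=0$ and $\Poi(0,1)=1$.

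\emph{Step 2: the map on $C_r$.} On $C_r$ we have $\bar\gamma=r^2/\gamma$, so \eqref{eq:iota} collapses to $\gamma^\iota=m_r(\gamma)$, where
\[
m_r(z)=\frac{r^2-z}{1-z}, \qquad m_r'(z)=\frac{r^2-1}{(1-z)^2}.
\]
A direct check shows that $m_r$ is a Möbius transformation carrying $C_r$ onto itself (and the disk it bounds to itself, hence it is orientation-preserving on $C_r$), that $m_r\circ m_r=\mathrm{id}$, and that $|m_r'(z)|=\frac{1-r^2}{|1-z|^2}=\Poi(z,1)$ for $z\in C_r$.

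\emph{Step 3: Jacobian bookkeeping.} For a Möbius map $T$ preserving $C_r$ with preserved orientation, the derivative of the induced angle map at $z$ equals $|T'(z)|$; hence the pushforward of normalized arc length on $C_r$ under $m_r$ has density $w\mapsto 1/|m_r'(m_r(w))|$, the evaluation at $m_r(w)$ coming from $m_r^{-1}=m_r$. Since $1-m_r(w)=\frac{1-r^2}{1-w}$, a one-line computation gives $|m_r'(m_r(w))|=\frac{|1-w|^2}{1-r^2}$, so the density equals $\frac{1-r^2}{|1-w|^2}=\Poi(w,1)$ on $C_r$; no stray constant appears because $\Poi(\cdot,1)$ already integrates to $1$ against uniform measure on $C_r$. (As an independent check, $\Poi(\gamma^\iota,1)=1/\Poi(\gamma,1)$ also follows from \eqref{eq:Ay} together with $\mathcal A_{\gamma^\iota,\UU}=\mathcal A_{\gamma,\UU}^{-1}$ and the fact that $\mathcal A_{\gamma,\UU}$ fixes $1$.) Undoing the disintegration of Step 1 then gives the lemma.

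The only genuinely error-prone point is the Jacobian bookkeeping in Step 3 — correctly converting the pushforward into $|m_r'|$ and remembering to evaluate at $m_r^{-1}(w)=m_r(w)$ rather than at $w$; all the rest is short direct computation.
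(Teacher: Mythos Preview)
Your proof is correct and follows essentially the same route as the paper's: disintegrate over $|\alpha|=r$, recognize that on each circle the map $\gamma\mapsto\gamma^\iota$ is a M\"obius involution, and read off the density from the change-of-variables Jacobian. The only cosmetic difference is that the paper rescales to the unit circle, writing $\alpha=r\eta$ with $\eta\in\partial\UU$ and $\alpha^\iota=r g_r(\eta)$ for $g_r(u)=\frac{u-r}{ru-1}$, whereas you work directly on $C_r$ with $m_r(z)=r\,g_r(z/r)$; the two computations are related by this dilation and yield the same Poisson density.
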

\begin{proof}
By first conditioning on $|\alpha|=r$, we may assume without loss of generality that  $\alpha=r \eta$ where $r\in (0,1)$ is deterministic,  and $\eta$ is uniform on the unit circle. (Note that $\alpha=0$ exactly if $\alpha^{\iota}=0$, and $\Poi(0,1)=1$.)
Then 
$$
\alpha^\iota=-\alpha\frac{1-\bar \alpha}{1-\alpha}=-r \eta\frac{1-r \eta^{-1}}{1-r \eta}=rg_r(\eta), \qquad 
g_r(u)=\frac{u-r}{ru-1}.$$
Since $\alpha\mapsto \alpha^\iota$ is an involution, we have $g_r^{-1}=g_r$. By the change of variables formula, the density of 
$\alpha^\iota$ at $ru$ with respect to uniform distribution on $r\partial  \UU$ is given by 
\[
|(g_r^{-1})'(u)|=|g_r'(u)|=\left|\frac{r^2-1}{(1-ru)^2}\right|=\Poi(ru,1).
\qedhere\]
\end{proof}

If the modified Verblunsky coefficients of a finite discrete probability measure on $\partial \UU$ satisfy certain invariance conditions then we can explicitly describe the affect of biasing by the weight of zero. This is the content of our next proposition.

\begin{proposition} \label{prop:path} Consider a random probability measure $\mu$ supported on  $n$ points on the unit circle. Let $\gamma_0, \dots, \gamma_{n-1}$ be its modified Verblunski coefficients, and $b_0, \dots, b_n$ its path parameter in $\UU$. 
Assume that for every $\eta\in \partial \UU$, the measures  $\mu_\eta$ and $\mu$ have the same law. Assume further that $\gamma_{n-1}\in \partial \UU$ is uniform and independent of the rest of the modified Verblunski coefficients. 

Then  $\mu$ biased by the weight of 1 exists; call it $\nu$. Denote the modified Verblunski coefficients of $\nu$ by $\hat \gamma_0, \dots, \hat \gamma_{n-1}$.  Then $\hat \gamma_{n-1}=1$ and the distribution of $(\hat \gamma_0,\ldots,\hat \gamma_{n-2})$ can be described in several ways as follows. 

\begin{enumerate}[(1)]
    \item The law of $(\gamma_0, \dots, \gamma_{n-2})$ given $b_n=1$.
    \item The law of $(\gamma_0, \dots, \gamma_{n-2})$  biased by $\Poi(b_{n-1},1)$.
    \item The law of $(\gamma_0, \dots, \gamma_{n-2})$  biased by $\prod_{i=0}^{n-2} \Poi(\gamma_i,1)$.
    \item If we further assume that the arguments of $\gamma_i$ are conditionally independent and uniform given the moduli, then the law of  $(\gamma_0^\iota, \dots, \gamma_{n-2}^\iota)$.
    \item Under (4), the law of the first $n-1$ of the deformed Verblunsky coefficients corresponding to the reversed path parameter 
$$
b'_k=
\cA_{\tilde b_{n-1},\UU}( \tilde b_{n-k-1}) \qquad k<n
$$
where $\tilde b$  is the path parameter for the coefficients $\gamma_{n-2},\ldots, \gamma_0$.
\end{enumerate}
Moreover, if $\mu(\cdot u)\ed \mu$ for all $u$, then $\nu$ is the Palm measure of $\mu$.
\end{proposition}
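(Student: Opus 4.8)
The plan is to lean on Corollary~\ref{cor:cond_1}, which already identifies $\mu$ biased by the weight of $1$ as a random Aleksandrov transform $\mu_{b_n}$, and then to do one genuine computation at the level of path parameters to pin down its modified Verblunsky coefficients; descriptions~(2)--(5) will follow from~(1) by bookkeeping. First, the hypothesis $\mu_\eta\ed\mu$ for all $\eta\in\partial\UU$ is exactly the hypothesis of Corollary~\ref{cor:cond_1}, so $\mu$ biased by the weight of $1$ exists and has the law of $\mu_{b_n}$; call it $\nu$. By Lemma~\ref{lem:Aleks_charge}(2) the measure $\mu_{b_n}$ a.s.\ has an atom at $1$, and since $\Phi_n(1)=\prod_{j=0}^{n-1}(1-\gamma_j)$ by \eqref{eq:Phi_1} with $|\gamma_j|<1$ for $j\le n-2$, an $n$-point measure charges $1$ exactly when its last modified Verblunsky coefficient is $1$. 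Hence $\hat\gamma_{n-1}=1$ a.s.

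For description~(1) I would show $\nu\ed\mu\mid\{b_n=1\}$, where the conditioning is a legitimate regular conditional law: since $\gamma_{n-1}$ is uniform and independent of $(\gamma_0,\dots,\gamma_{n-2})$ and $b_n=\cA_{b_{n-1},\UU}^{-1}(\gamma_{n-1})$ (from \eqref{eq:bk_cA} and \eqref{eq:A_inverse}), the law of $b_n$ on $\partial\UU$ is absolutely continuous, and the modified Verblunsky map is continuous, so the conditional laws depend continuously on the conditioning value. Writing $T_\eta$ for $\mu\mapsto\mu_\eta$, Aleksandrov invariance gives $(T_\eta)_*\operatorname{law}\mu=\operatorname{law}\mu$, while Lemma~\ref{lem:Aleks_charge}(1) gives $b_n\circ T_\eta=\eta^{-1}b_n$; hence $T_\eta$ carries $\operatorname{law}(\mu\mid b_n=w)$ to $\operatorname{law}(\mu\mid b_n=\eta^{-1}w)$, and taking $\eta=w$ yields $\operatorname{law}(\mu_w\mid b_n=w)=\operatorname{law}(\mu\mid b_n=1)$ for every $w$. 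Integrating over the law of $b_n$ then gives $\operatorname{law}(\mu_{b_n})=\operatorname{law}(\mu\mid b_n=1)$, i.e.\ $\nu\ed\mu\mid\{b_n=1\}$. On the event $\{b_n=1\}$ the measure $\mu$ charges $1$, so its last modified Verblunsky coefficient is $1$ and its first $n-1$ are unchanged; this is exactly description~(1).

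For descriptions~(2)--(5): since $b_n$ is the image of the uniform variable $\gamma_{n-1}$ under a Möbius self-map of $\UU$ taking $0$ to $b_{n-1}$, its conditional density at $1$ given $(\gamma_0,\dots,\gamma_{n-2})$ is the harmonic-measure density $\Poi(b_{n-1},1)=\cA_{b_{n-1},\UU}'(1)$ (Poisson kernel formula and \eqref{eq:Ay}), so Bayes's rule turns~(1) into~(2). Using $\Poi(\gamma,1)=\det A_{\gamma,\UU}$ and $A_{b_{n-1},\UU}=A_{\gamma_{n-2},\UU}\cdots A_{\gamma_0,\UU}$ from \eqref{eq:Ab_rec}, the weight telescopes to $\Poi(b_{n-1},1)=\prod_{i=0}^{n-2}\Poi(\gamma_i,1)$, which is~(3). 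Under the extra assumption of~(4) one conditions on the moduli $|\gamma_i|$: the weight $\prod_i\Poi(\gamma_i,1)$ factorizes, has conditional mean $1$ in each coordinate (normalization of the Poisson kernel), hence leaves the joint law of the moduli unchanged, and by Lemma~\ref{lem:iota} biasing each conditionally uniform argument by $\Poi(\gamma_i,1)$ sends $\gamma_i$ to $\gamma_i^\iota$; this gives~(4). Finally~(5) is Lemma~\ref{lem:reverse} applied to the coefficient string $\gamma_{n-2},\dots,\gamma_0$, whose reversed path (anchored at its last point) is generated precisely by $\gamma_0^\iota,\dots,\gamma_{n-2}^\iota$.

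If moreover $\mu(\cdot\,u)\ed\mu$ for every $u$, then $\mu$ is rotationally invariant with finite expected total mass, so by the last assertion of Corollary~\ref{cor:cond_1} (equivalently Proposition~\ref{prop:Palm} together with Remark~\ref{rem:circle}) $\nu=\mu_{b_n}$ is the Palm measure of $\mu$. I expect the main obstacle to be the step proving description~(1): matching ``$\mu$ biased by the weight of the point $1$''---a statement about the atoms of the random measure---with ``$\mu$ conditioned on $b_n=1$''---a statement about a smooth functional of its coefficients. Corollary~\ref{cor:cond_1} supplies one half of this via the Aleksandrov description, and the conditional-law translation $\operatorname{law}(\mu_w\mid b_n=w)=\operatorname{law}(\mu\mid b_n=1)$---the point where Aleksandrov invariance is used essentially---supplies the other; some care with the continuity of the coefficient maps is needed to legitimize the limiting conditionings.
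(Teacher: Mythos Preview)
Your proposal is correct and follows essentially the same route as the paper: invoke Corollary~\ref{cor:cond_1} to identify $\nu$ with $\mu_{b_n}$, then establish description~(1) via Aleksandrov invariance, then derive (2) from (1) by Bayes using the Poisson-kernel density of $b_n$ given $b_{n-1}$, (3) from (2) via the determinant factorization \eqref{eq:Ay}--\eqref{eq:Ab_rec}, (4) from (3) via Lemma~\ref{lem:iota}, and (5) from (4) via Lemma~\ref{lem:reverse}, with the Palm claim coming from Corollary~\ref{cor:cond_1}. The only stylistic difference is in step~(1): the paper introduces an auxiliary independent uniform $\eta$ and argues that $\eta b_n\alpha$ has the same law as $\alpha$ with path endpoint $\eta^{-1}$, so conditioning on endpoint $=1$ sets $\eta=1$; you instead push forward regular conditional laws under $T_\eta$ and integrate. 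These are two phrasings of the same computation.
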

\begin{proof}
Consider the path parameter $b_0, \dots, b_n$
of $\mu$. By Corollary \ref{cor:cond_1} $\mu$ biased by the weight of 1 exists, and it is the same as $\mu_{b_n}$. Hence $\nu\ed\mu_{b_n}$.

Let $\alpha_0, \dots, \alpha_{n-1}$ be the Verblunsky coefficients of $\mu$, and let $\eta$ be uniform on $\partial \UU$, independent of $\alpha_0, \dots, \alpha_{n-1}$. 
Then  $\eta b_n\alpha_0,\ldots \eta b_n\alpha_{n-1}$ has the same distribution as $\alpha_0,\ldots ,\alpha_{n-1}$. Moreover, by Lemma \ref{lem:Aleks_charge} the  path parameter $b_{\eta b_n,0}, \dots, b_{\eta b_n,n}$ corresponding to the Verblunsky coefficients  $\eta b_n\alpha_0,\ldots \eta b_n\alpha_{n-1}$ satisfies $b_{\eta b_n,n}=1/\eta$.
So conditioning $\eta b_n\alpha_0$ on $b_{\eta b_n,n}=1$ just sets $\eta=1$. This means that 
$(\alpha_0, \dots, \alpha_{n-1})$ conditioned on $b_n=1$ has the same distribution as $(b_n \alpha_0, \dots,b_n \alpha_{n-1})$, the Verblunsky coefficients of $\mu_{b_n}$. Since the modified Verblunsky coefficients are functions of the regular Verblunsky coefficients (see \eqref{eq:gamma_alpha}), it follows that the distribution of $(\hat \gamma_0,\ldots,\hat \gamma_{n-2})$ is the same as the law described in (1).


Since $\gamma_{n-1}$ is independent of the rest, the conditional distribution of $b_n$ given $\gamma_0,\ldots \gamma_{n-2}$ is the same as the distribution of $\frac{b+e^{i \Theta}\frac{1-b}{1-\bar b}}{1+\bar b e^{i \Theta} \frac{
1-b}{1-\bar b}}$ with $\Theta$ uniform on $\partial \UU$ and $b=b_{n-1}(\gamma_0, \dots, \gamma_{n-1})$. A direct change of variables computation shows that this distribution has density 
$\Poi(b,\cdot)$.  Bayes' formula now shows that the distributions in (1) and (2) are the same.

Using \eqref{eq:Ay} and \eqref{eq:Ab_rec} we have
\begin{align*}
  \Poi(b_{n-1},1)=\det A_{b_{n-1}, \UU}=\det (A_{\gamma_{n-2}, \UU} \cdots A_{\gamma_{0},\UU})=\prod_{i=0}^{n-2} \det A_{\gamma_i, \UU} =\prod_{i=0}^{m-2}  \Poi(\gamma_i,1),
\end{align*}
which shows the equivalence of the distributions in (2) and (3). 

The equivalence of the distributions of (3) and (4) follows from Lemma \ref{lem:iota}. The equivalence of the distributions in  (4 and (5) is a consequence of Lemma \ref{lem:reverse}.
%
The last  claim follows from Proposition \ref{prop:Palm} and  Corollary \ref{cor:cond_1} .
\end{proof}

We finish the paper with the proof of Proposition \ref{prop:biased_V}.

\begin{proof}[Proof of Proposition \ref{prop:biased_V}]
The Verblunski coefficients $\gamma_0,\dots, \gamma_{n-1}$ of $\mu=\mu_{n,\beta}^{\textup{KN}}$ are independent, rotationally invariant, with density given by \eqref{eq:verb_circ}. Hence $\mu$ satisfies the conditions of Proposition \ref{prop:path}, including the additional condition of statement (4) and the condition $\mu(\cdot u)\ed \mu$ for all $u$. Hence by statement (3) of Proposition \ref{prop:path} the Verblunsky coefficients $\gamma_i'$ of $\nu$ are given by \eqref{eq:verb_nu} and $\gamma'_{n-1}=1$. It also follows that the Palm measure of $\mu$ is $\nu$. From Definition \ref{def:palm} and Remark \ref{rem:circle} the measure $\mu(X \cdot)$ is equal to the Palm measure of $\mu$. Hence $\mu(X\cdot)$ and   $\nu(\cdot)$ have the same law. Finally,  the weights of $\mu$ are independent of the support of $\mu$, and their expectations are equal to $1/n$, hence $\operatorname{supp} \mu(X\cdot)$ and  $\operatorname{supp} \tilde \mu(Y\cdot)$
have the same law as well. 
\end{proof}

\noindent {\bf Acknowledgments.}
The second author was partially supported by  the NSERC Discovery grant program.

$ $\\

\bigskip\noindent
Benedek Valk\'o
\\Department of Mathematics
\\University of Wisconsin - Madison
\\Madison, WI 53706, USA
\\{\tt valko@math.wisc.edu}
\\[20pt]
B\'alint Vir\'ag
\\Departments of Mathematics and Statistics
\\University of Toronto
\\Toronto ON~~M5S 2E4, Canada
\\{\tt balint@math.toronto.edu}

\end{document}